\tikzset{commutative diagrams/.cd, column sep/normal=3ex}
\def\hhmm{\number\hh:\ifnum\mm<10{}0\fi\number\mm}
\renewcommand{\phi}{\varphi}
\DeclareMathOperator{\image}{im}
\DeclareMathOperator{\rs}{rs}
\DeclareMathOperator{\Rlim}{R^1\varprojlim}
\DeclareMathOperator{\supp}{supp}
\DeclareMathOperator{\DM}{DM}
\DeclareMathOperator{\sep}{sep}
\DeclareMathOperator{\rank}{rank}
\renewcommand{\int}{\text{int}}
\DeclareMathOperator{\spc}{special}
\renewcommand{\triangle}{\spc}
\DeclareMathOperator{\unip}{unip}
\DeclareMathOperator{\Spec}{Spec}
\DeclareMathOperator{\ord}{ord}
\DeclareMathOperator{\cont}{cont}
\DeclareMathOperator{\Hom}{Hom}
\DeclareMathOperator{\Isom}{Isom}
\DeclareMathOperator{\alg}{alg}
\DeclareMathOperator{\Conn}{Conn}
\DeclareMathOperator{\Repf}{Repf}
\DeclareMathOperator{\triv}{triv}
\DeclareMathOperator{\Gal}{Gal}
\DeclareMathOperator{\id}{id}
\DeclareMathOperator{\GL}{GL}
\DeclareMathOperator{\rsn}{ 0}
\DeclareMathOperator{\rsi}{ \infty}
\DeclareMathOperator{\Frac}{Frac}
\DeclareMathOperator{\Cov}{Cov}
\DeclareMathOperator{\Ext}{Ext}
\DeclareMathOperator{\Vectf}{Vectf}
\DeclareMathOperator{\Strat}{Strat}
\DeclareMathOperator{\restr}{res}
\DeclareMathOperator{\et}{\text{\'et}}
\newcommand{\Z}{\mathbb{Z}}
\newcommand{\G}{\mathbb{G}}
\newcommand{\N}{\mathbb{N}}
\newcommand{\C}{\mathbb{C}}
\newcommand{\A}{\mathbb{A}}
\newcommand{\F}{\mathbb{F}}
\newcommand{\llparen}{(\!(}
\newcommand{\rrparen}{)\!)}
\renewcommand{\P}{\mathbb{P}}
\renewcommand{\subset}{\subseteq}
\theoremstyle{theorem}
\newtheorem{theorem}{Theorem}
\newtheorem{proposition}{Proposition}[section]
\newtheorem{lemma}[proposition]{Lemma}
\newtheorem{corollary}[proposition]{Corollary}
\theoremstyle{definition} 
\newtheorem{definition}[proposition]{Definition}
\newtheorem{convention}[proposition]{Convention}
\newtheorem{remark}[proposition]{Remark}
\title{Local-to-global extensions of $\mathscr{D}$-modules in positive
characteristic}
\author{Lars Kindler}
\address{Lars Kindler, Freie Universit\"at Berlin, Mathematisches Institut,
Arnimallee 3, 14195 Berlin, Germany}
\email{kindler@math.fu-berlin.de}
\thanks{The author was supported by ERC Advanced Grant 226257.}
\begin{document}
\begin{abstract}
	In \cite{Katz/Calculation}, Katz defines the notion of a
	\emph{special} flat connection on $\P^1_{\C}\setminus \{0,\infty\}$, and he
	shows that the functor which restricts a flat connection to
	the punctured disc around the point at infinity gives rise to an equivalence
	between the category of special flat connections on
	$\P^1_{\C}\setminus \{0,\infty\}$ and the category of
	differential modules on $\C\llparen t\rrparen$.

	In this article, we prove the corresponding statement over an
	algebraically closed field $k$ of positive
	characteristic.  The role of flat connections is played by vector bundles carrying an action of the
	(full) ring of differential operators.  Such objects are also called \emph{stratified
	bundles}. The formal local variant on the field of Laurent series
	$k\llparen t\rrparen$  is called
	\emph{iterated differential module}. We define the notion of a special
	stratified bundle on $\P^1_{k}\setminus \{0,\infty\}$, and show that
	restriction to the punctured disc around the point at infinity induces an equivalence between
	the category of special stratified bundles and iterated differential
	modules on $k\llparen t\rrparen$. This extends one of the main results of
	\cite{Katz/LocalToGlobal}, and has several interesting consequences
	which extend well-known statements about \'etale fundamental groups to 
	higher dimensional fundamental groups.
\end{abstract}
\maketitle
Let $\G_{m,\C}:=\P^1_{\C}\setminus\{0,\infty\}$, and let $\Conn(\G_{m,\C})$
be the
category of vector bundles on $\G_{m,\C}$ equipped with a flat connection.
Consider $\Spec\C\llparen t\rrparen$ as the punctured disc around $\infty$, and write $\DM(\C\llparen t\rrparen)$ for the category of
differential modules on $\C\llparen t\rrparen$. Restriction induces a functor $\Conn(\G_{m,\C})\rightarrow
\DM(\C\llparen t\rrparen)$. In \cite{Katz/Calculation}, Katz defines a
full subcategory \[\Conn^{\spc}(\G_{m,\C})\subset \Conn(\G_{m,\C})\] of
\emph{special} flat connections, with the property that the restriction
functor
\begin{equation}\label{eq:complexEq}\Conn^{\spc}(\G_{m,\C})\rightarrow
	\DM(\C\llparen t\rrparen)\end{equation}
is an equivalence. It follows that the category of
differential modules on $\C\llparen t\rrparen$ can be equipped with a
$\C$-valued fiber functor. The main result of this article is a generalization of
Katz' equivalence \eqref{eq:complexEq} to positive characteristic.

Fix an algebraically closed field $k$ of positive characteristic $p$. If $X$ is a smooth $k$-variety, then we write $\Strat(X)$ for the category of
stratified bundles on $X$, i.e.~for the category of $\mathcal{O}_X$-coherent
$\mathscr{D}_{X/k}$-modules, where $\mathscr{D}_{X/k}$ is the sheaf 
of differential operators on $X$ relative to $k$, as defined in
\cite[\S16.]{EGA4}\footnote{The notion of a stratification goes back to
	\cite{Grothendieck/Crystals}. On a smooth $k$-variety $X$, the datum of a
	stratification on an $\mathcal{O}_X$-module  $E$ is equivalent to
	an action of $\mathscr{D}_{X/k}$ on $E$
	(\cite[Ch.~2]{BerthelotOgus/Crystalline}) and if $E$ is
	$\mathcal{O}_X$-coherent, then
	it is locally free (\cite[p.314]{Saavedra}).}. In many respects, the behavior of these objects
is very similar to the behavior of vector bundles with flat connections on smooth
complex varieties, see e.g.~\cite{Gieseker/FlatBundles}, \cite{EsnaultMehta/Gieseker}, \cite{Esnault/ECM},
\cite{Kindler/FiniteBundles}. In particular, after chosing a base point,
$\Strat(X)$ is a neutral tannakian category over $k$, see \cite[Ch.~VI, \S.1]{Saavedra}.

An \emph{iterated differential module} on $k\llparen t\rrparen$ is a finite dimensional $k\llparen t\rrparen$-vector
space together with an action of the (commutative) $k$-algebra
$k[\delta_{t}^{(n)}|n\geq 0]=k[\delta^{(p^n)}_t|n\geq 0]$ (see
\cite[Sec.~6]{Matzat}). Here $\delta^{(n)}_t$ is the differential
operator which acts on powers of $t$ via the formula
\begin{equation}\label{eq:diffOp}\delta^{(n)}_t(t^r)=\binom{r}{n}t^r.\end{equation}
Abusing
notation slightly,  we will write $\Strat(k\llparen
t\rrparen)$ for the category of iterated differential modules on
$k\llparen t\rrparen$, and also say ``stratified bundle on $k\llparen
t\rrparen$'' (even though $k\llparen t\rrparen$ is not a $k$-algebra of finite
type). The category
$\Strat(k\llparen t\rrparen)$ is a $k$-linear, abelian, rigid tensor category with unit
$\mathbf{1}:=k\llparen t\rrparen$ with its natural $k[\delta^{(n)}_t|n\geq
0]$-action given by \eqref{eq:diffOp}. 
One has two restriction functors $\Strat(\G_m)\rightarrow
\Strat(k\llparen t\rrparen)$, and $\Strat(\G_m)\rightarrow \Strat(k\llparen
t^{-1}\rrparen)$ arising from the inclusions $k[t^{\pm 1}]\subset
k\llparen t\rrparen$, $t\mapsto t$ and $k[t^{\pm 1}]\subset k\llparen t^{-1}\rrparen$,
$t\mapsto t$.
\begin{convention}\label{conv:inclusion}For notational ease, the roles of $0$ and $\infty$
	in the following will be opposite to their roles in
	\cite{Katz/Calculation}, \cite{Katz/LocalToGlobal}.
	In other words, will always consider the polynomial ring $k[t]$ as a subring of $k[t^{\pm 1}]$ via the
	 inclusion defined by mapping $t\mapsto t$. Similarly, we will consider
	$k[t^{\pm 1}]$ as a subring of both $k\llparen t\rrparen$ and $k\llparen
	t^{-1}\rrparen$ via the  inclusions given by $t\mapsto t$.  From now
	on, we also simply write $\G_m:=\G_{m,k}$.
\end{convention}

We can now formulate the first main result of this article.
\begin{theorem}\label{thm:equivalence}
	There
	is a full subcategory $\Strat^{\spc,\infty}(\G_m)\subset \Strat(\G_m)$,
	such that the restriction functor
	\[\restr:\Strat(\G_m)\rightarrow \Strat(k\llparen t\rrparen), E\mapsto
	E|_{k\llparen t\rrparen}:=E\otimes_{k[t^{\pm 1}]}k\llparen t\rrparen,\]
	given by the inclusion $k[t^{\pm 1}]\subset k\llparen t\rrparen$, induces an equivalence
	\[\Strat^{\triangle,\rsi}(\G_m)\xrightarrow{\cong}\Strat(k\llparen t
		\rrparen).\]
	Moreover the notion of a special stratified bundle on $\G_m$ extends
	the notion of a special covering of \cite{Katz/LocalToGlobal}, see
	Proposition \ref{prop:special}.
\end{theorem}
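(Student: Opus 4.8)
The plan is to argue through the neutral tannakian formalism. After fixing compatible fiber functors, $\restr$ induces a homomorphism $f\colon \pi_1(\Strat(k\llparen t\rrparen))\to\pi_1(\Strat^{\spc,\infty}(\G_m))$ of affine $k$-group schemes, and it is enough to show $f$ is an isomorphism. I would define $E\in\Strat(\G_m)$ to be \emph{special at $\infty$} by a condition on its restriction $\restr_0(E)$ to the punctured disc at $0$: namely that $\restr_0(E)$ be regular singular with exponents normalized to lie in a once-and-for-all fixed set $\Sigma\subset\Z_p$ of representatives for $\Z_p/\Z$ with $0\in\Sigma$. Using the theory of regular singular stratified bundles and their exponents one checks that $\Strat^{\spc,\infty}(\G_m)$ is a full tannakian subcategory (regular singularity at $0$ is inherited by subquotients, $\otimes$ and duals, and differences of elements of $\Sigma$ meet $\Z$ only in $\{0\}$ because $\Sigma$ is a section of $\Z_p\to\Z_p/\Z$). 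One also verifies by comparison with \cite{Katz/LocalToGlobal} that a stratified bundle trivialized by a finite \'etale cover is special in this sense exactly when the cover is special in Katz' sense, which gives Proposition~\ref{prop:special}. By the Deligne--Milne dictionary, $f$ is an isomorphism once one has: (I) $\restr$ is \emph{fully faithful} on $\Strat^{\spc,\infty}(\G_m)$, and every sub-stratified-bundle of $\restr(E)$ for $E$ special descends to a (special) sub-stratified-bundle of $E$ --- so that $f$ is faithfully flat; and (II) every iterated differential module on $k\llparen t\rrparen$ is a subquotient of $\restr(E)$ for some special $E$ --- so that $f$ is a closed immersion. (Plain faithfulness of $\restr$ is immediate, since stratified bundles are $\O$-coherent hence torsion free and $k[t^{\pm1}]\hookrightarrow k\llparen t\rrparen$.)

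For (I): since $\G_m$ is affine, $\Hom_{\Strat(\G_m)}(E,E')$ and its $k\llparen t\rrparen$-counterpart are the global, resp.\ formal-at-$\infty$, horizontal sections of $\HHom(E,E')$, so the point is that every horizontal section of $\HHom(E,E')\otimes_{k[t^{\pm1}]}k\llparen t\rrparen$ is algebraic, and likewise for the descent of subobjects. Such a horizontal section spans a formal trivial rank-one sub-stratified-bundle; the crucial step is to show --- using that $E,E'$ special forces $\HHom(E,E')$ to be regular singular at $0$ with no nonzero integer among its exponents there --- that this formal rank-one subobject is the restriction of an \emph{algebraic} rank-one sub-stratified-bundle $G\subset\HHom(E,E')$; the obstruction to algebraicity is localized at $0$ and is killed by the specialness there (a pole along $0$ of a horizontal section, or the failure of an algebraic lattice to exist, would force a forbidden exponent at $0$). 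Once $G$ is algebraic, it is a rank-one stratified bundle on $\G_m$, trivial at $\infty$; regular singularity forces its exponent at $\infty$, hence by the rank-one relation between the exponents at the two punctures also its exponent at $0$, to vanish, so $G$ extends to a rank-one stratified bundle on $\P^1$ and is therefore trivial, since $\Strat(\P^1_k)$ consists of trivial bundles. Thus the horizontal section extends, and (I) follows by running the same argument on kernels and images of a general morphism.

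For (II): given $M\in\Strat(k\llparen t\rrparen)$, I would invoke the positive-characteristic structure theory of stratified bundles on the punctured formal disc --- a Frobenius-compatible replacement for the Turrittin--Levelt decomposition --- to write $M$, after pullback along a tame cover $[n]\colon\G_m\to\G_m$, as a successive extension of tensor products $\Exp(q)\otimes R$ with $q$ an ``exponential part'' supported at $\infty$ and $R$ regular singular. Each block globalizes: $\Exp(q)$ is a rank-one stratified bundle on $\G_m$ which is \emph{regular} at $0$ because $q$ has no pole there, and $R$ extends to a regular singular stratified bundle on $\G_m$ whose exponents at $0$ can be moved into $\Sigma$ by an elementary modification of its lattice at $0$ that is invisible at $\infty$. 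Tensor products of special bundles are special, extensions can be realized inside $\Strat(\G_m)$ because $\G_m$ is affine and the relevant $\Ext^1$-groups are de Rham cohomology of bundles on an affine curve, and finally pushforward along $[n]$ preserves regular singularity at $0$ and shifts the exponents there in a controlled way, after which one renormalizes. This produces a special $E$ with $\restr(E)$ containing $M$ as a subquotient, which is all (II) requires.

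The step I expect to be hardest is (II), which rests on the two inputs about $\Strat(k\llparen t\rrparen)$ that it uses: the Turrittin--Levelt-type decomposition --- where the classical argument, which relies on the structure of the differential Galois group, must be replaced by a slope and Frobenius-descent analysis, and the admissible ``exponential'' rank-one objects $\Exp(q)$ must be pinned down exactly (only those $q$ for which every $\delta_t^{(p^n)}$ acts integrally occur) --- together with the controlled behavior of regular singular structures and exponents under tame pushforward. Getting the definition of ``special'' exactly strong enough that both (I) and (II) go through, as in Katz' characteristic-zero argument, is the conceptual crux of the whole statement.
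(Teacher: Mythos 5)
There is a genuine gap, and it starts with the definition of ``special''. You transplant Katz's characteristic-zero recipe: regular singular at the far puncture with exponents normalized into a fixed section $\Sigma$ of $\Z_p\to\Z_p/\Z$. In characteristic $p$ the normalization is beside the point --- by Proposition \ref{prop:rank1} the rank-one objects on $\G_m$ and on the formal punctured disc are both classified by $\Z_p/\Z$ and restriction is already a bijection on these, so no choice of representatives is needed --- and the condition that is actually essential is missing: a special bundle must admit a filtration with rank-one graded pieces (Definition \ref{defn:triangle}). This is forced because \emph{every} object of $\Strat(k\llparen t\rrparen)$ is an iterated extension of rank-one objects (\cite[Prop.~6.3]{Matzat}, cf.~Remark \ref{rem:MainThm}), whereas $\Strat(\G_m)$ has irreducibles of higher rank that are regular singular at one puncture; without the filtration hypothesis the restriction functor cannot be an equivalence, and your version of Proposition \ref{prop:special} also fails, since the rank-one filtration is exactly what encodes the normal $p$-Sylow condition on the Galois group.

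The same misreading of the local structure theory sinks step (II). There is no Turrittin--Levelt decomposition into blocks $\Exp(q)\otimes R$ after tame pullback: in characteristic $p$ every irreducible object of $\Strat(k\llparen t\rrparen)$ has rank one and is regular singular, and all of the wild ramification is carried by the extension classes, with $\Ext^1(L_1,L_2)$ possibly nonzero even for $L_1\not\cong L_2$ (Remark \ref{rem:MainThm}). So essential surjectivity is really the problem of lifting local extension data --- concretely, classes in twisted derived limits $\Rlim^{\gamma}_n U_r(k\llparen t\rrparen^{p^n})$ --- to global classes killed at the other puncture. The paper does this by identifying isomorphism classes of filtered bundles with $\Rlim$ of (twisted) projective systems of upper-triangular groups, inducting along the central series of $U_r$, and resolving the rank-two base case by an explicit manipulation of supports of Laurent series (Lemma \ref{lemma:triviality}); your proposal contains no substitute for this computation, and the route you flag as hardest (exponential parts plus tame pushforward) cannot be repaired. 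Your step (I) is closer in spirit to Proposition \ref{prop:fullyfaithfulness} --- one does use the regularity condition at the far puncture to force a formal horizontal section to be a Laurent polynomial --- but the mechanism you invoke (no nonzero integer among exponent differences of $\HHom(E,E')$) fails already for $E=E'$; the paper instead exploits the rank-one filtration and a direct binomial-coefficient degree bound.
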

Just as in \cite[Cor.~2.4.12]{Katz/Calculation}, we obtain from Theorem
\ref{thm:equivalence} the following consequence.
\begin{corollary}\label{cor:neutralization}
	There exists an exact $\otimes$-functor
	$\omega:\Strat(k\llparen t\rrparen)\rightarrow \Vectf_k$, where
	$\Vectf_k$ denotes the category of finite dimensional $k$-vector
	spaces.
\end{corollary}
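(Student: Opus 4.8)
The plan is to produce $\omega$ by transporting a fibre functor on the global object through the equivalence of Theorem \ref{thm:equivalence}. The key point is that $\G_m=\Spec k[t^{\pm 1}]$ carries the $k$-rational point $x\colon \Spec k\to\G_m$ given by $t\mapsto 1$, so that, as recalled in the introduction (see \cite[Ch.~VI, \S 1]{Saavedra}), $\Strat(\G_m)$ is a neutral tannakian category over $k$ with fibre functor $x^*\colon\Strat(\G_m)\to\Vectf_k$, $E\mapsto E\otimes_{k[t^{\pm 1}],\,t\mapsto 1}k$. In particular $x^*$ is $k$-linear, exact, faithful and $\otimes$-compatible. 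This is the only nontrivial external input besides Theorem \ref{thm:equivalence} itself.

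First I would observe that the restriction functor $\restr\colon\Strat(\G_m)\to\Strat(k\llparen t\rrparen)$, $E\mapsto E\otimes_{k[t^{\pm 1}]}k\llparen t\rrparen$, is a $k$-linear $\otimes$-functor, which is immediate because base change along the ring map $k[t^{\pm 1}]\to k\llparen t\rrparen$ commutes with tensor products over the base and sends $\mathbf{1}$ to $\mathbf{1}$. By Theorem \ref{thm:equivalence}, $\restr$ restricts to an equivalence $\restr\colon\Strat^{\spc,\infty}(\G_m)\xrightarrow{\cong}\Strat(k\llparen t\rrparen)$, where $\Strat^{\spc,\infty}(\G_m)$ is a full $\otimes$-subcategory of $\Strat(\G_m)$ that (as part of the development of Theorem \ref{thm:equivalence}) is closed under the relevant abelian operations inside $\Strat(\G_m)$, so that $x^*$ restricts to an exact faithful $\otimes$-functor on it. Next I would invoke the standard facts that an equivalence of abelian categories is automatically exact, and that a quasi-inverse $G\colon\Strat(k\llparen t\rrparen)\xrightarrow{\cong}\Strat^{\spc,\infty}(\G_m)$ of a monoidal equivalence inherits a canonical monoidal structure (its unit and multiplication constraints being obtained from those of $\restr$ through the adjunction isomorphisms). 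Thus $G$ is an exact $\otimes$-functor.

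Finally I would set $\omega:=x^*\circ G\colon\Strat(k\llparen t\rrparen)\to\Vectf_k$, with $x^*$ understood as restricted to $\Strat^{\spc,\infty}(\G_m)$. As a composite of exact $\otimes$-functors, $\omega$ is an exact $\otimes$-functor, which is the assertion of Corollary \ref{cor:neutralization}. I do not expect any real obstacle: the entire content is Theorem \ref{thm:equivalence}, and the remaining ingredients — that $\restr$ is monoidal, that $\Strat^{\spc,\infty}(\G_m)$ is a full tensor subcategory on which the fibre functor restricts, and that a monoidal equivalence has a monoidal quasi-inverse — are routine. One may add, exactly as in \cite[Cor.~2.4.12]{Katz/Calculation}, that $\omega$ is in fact faithful, so that this realises $\Strat(k\llparen t\rrparen)$ as a neutral tannakian category over $k$; but only the stated weaker form is needed here.
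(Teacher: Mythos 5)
Your proposal is correct and is exactly the argument the paper intends: the paper derives the corollary from Theorem \ref{thm:equivalence} ``just as in \cite[Cor.~2.4.12]{Katz/Calculation}'', i.e.\ by composing a quasi-inverse of the equivalence $\Strat^{\spc,\infty}(\G_m)\xrightarrow{\cong}\Strat(k\llparen t\rrparen)$ with the fibre functor attached to a $k$-point of $\G_m$, and the supporting fact you need — that $\Strat^{\spc,\infty}(\G_m)$ is a strictly full subtannakian category of $\Strat(\G_m)$ — is recorded in Remark \ref{rem:MainThm}. No gaps.
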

As the last part of the theorem suggests, Theorem \ref{thm:equivalence} is closely related to the main theorem of
\cite[Part 1]{Katz/LocalToGlobal}.
Exchanging the roles of $0$ and $\infty$ in Theorem \ref{thm:equivalence},
we obtain a full subcategory $\Strat^{\spc,\rsn}(\G_m)$, such that restriction
induces an equivalence
\begin{equation}\label{eq:oppositeEq}\Strat^{\spc,\rsn}(\G_m)\xrightarrow{\cong}\Strat(k\llparen
	t^{-1}\rrparen).\end{equation}
Let $\Cov(\G_m)$ denote the
category of finite \'etale coverings of $\G_m$. 
 In \cite{Katz/LocalToGlobal}, Katz and Gabber define a subcategory
$\Cov^{\spc}(\G_m)$ of special coverings of $\G_m$, such that the restriction
functor  $\Cov(\G_m)\rightarrow \Cov(k\llparen t^{-1}\rrparen)$ induces an
equivalence
\begin{equation}\label{eq:finiteEq}\Cov^{\spc}(\G_m)\xrightarrow{\cong}
	\Cov(k\llparen t^{-1}\rrparen).
\end{equation}
For an \'etale covering $f:X\rightarrow \G_m$ the
$\mathcal{O}_X$-module $f_*\mathcal{O}_X$ is naturally a stratified bundle on
$\G_m$; we obtain a faithful functor
$\Cov(\G_m)\rightarrow \Strat(\G_m)$. Theorem
\ref{thm:equivalence} claims that $f$ is special in the sense of
\cite{Katz/LocalToGlobal} 
if and only if $f_*\mathcal{O}_{X}\in
\Strat^{\spc,\rsn}(\G_m)$. This means that the equivalence
\eqref{eq:oppositeEq}
restricts to \eqref{eq:finiteEq}. \\

The structure of this article is as follows. In Section \ref{sec:Rlim} we
establish some facts about $\Rlim$ of a projective system of non-abelian
groups. These will be the main tools in the proof of Theorem
\ref{thm:equivalence}, as certain sets of isomorphism classes related to
stratified bundles can be naturally described as $\Rlim$ of such projective
systems. We explain these identifications in Section \ref{sec:stratAndRlim}. In Section
\ref{sec:specialStrat} we define the notion of a special stratified bundle.
The proof of Theorem \ref{thm:equivalence} is given in Section
\ref{sec:proof}. Finally, in Section \ref{sec:applications}, we present
applications of Theorem \ref{thm:equivalence} and the methods used in its
proof; we conclude this introduction with a brief summary of these
applications.

\begin{definition}If $X$ is a smooth, finite type $k$-scheme, and
	$\omega:\Strat(X)\rightarrow \Vectf_k$ a fiber functor, then we write
	$\pi_1^{\Strat}(X,\omega)$ for the group scheme attached to $\omega$ via
	the Tannaka formalism; it is a
	reduced (\cite{DosSantos}), affine $k$-group scheme. If $X=\Spec R$ is affine, we also
	write $\pi_1^{\Strat}(R,\omega)$ instead of $\pi_1^{\Strat}(\Spec
	R,\omega)$. If $R=k\llparen
	t\rrparen$, we denote by $\pi_1^{\Strat}(k\llparen t\rrparen,\omega)$ the affine
	$k$-group scheme attached to the category $\Strat(k\llparen
	t\rrparen)$, even though $k\llparen t\rrparen$
	is not a finite type $k$-algebra.

	In the same vein we write $\pi_1^{\unip}(X,\omega)$
	(resp.~$\pi_1^{\rs}(X,\omega)$) for the affine group schemes
	associated with the full subcategory of $\Strat(X)$ with objects the
	unipotent (resp.~regular singular) stratified bundles. 
\end{definition}

The first application is a straightforward consequence of Corollary
\ref{cor:neutralization}.

\begin{theorem}\label{thm:ses}
	Fix a fiber functor $\omega:\Strat(k\llparen t\rrparen)\rightarrow
	\Vectf_k$. There is a short exact sequence of reduced affine $k$-group schemes
	\begin{equation}\label{eq:ses}\begin{tikzcd}1\rar&P(\omega)\rar&\pi_1^{\Strat}(k\llparen t\rrparen,\omega)\rar&
		\pi_1^{\rs}(k\llparen t\rrparen,\omega)\rar&1.
	\end{tikzcd}\end{equation}
	The group scheme $P(\omega)$ is unipotent.
\end{theorem}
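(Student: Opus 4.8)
The overall plan is to produce \eqref{eq:ses} by pure Tannakian formalism, and then to reduce the unipotence of $P(\omega)$ to the statement that $k\llparen t\rrparen$ carries no simple irregular stratified bundle. By Corollary~\ref{cor:neutralization} the category $\Strat(k\llparen t\rrparen)$ with the fiber functor $\omega$ is a neutral tannakian category over $k$. The full subcategory $\Strat^{\rs}(k\llparen t\rrparen)$ of regular singular objects contains the unit and is stable under subquotients, tensor products and duals, hence is a tannakian subcategory; so the inclusion, together with the restriction of $\omega$, yields a faithfully flat homomorphism $\pi_1^{\Strat}(k\llparen t\rrparen,\omega)\to\pi_1^{\rs}(k\llparen t\rrparen,\omega)$, and one \emph{defines} $P(\omega)$ to be its kernel. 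This is the sequence \eqref{eq:ses}. Its middle term is reduced by \cite{DosSantos}; its right-hand term is reduced, being a quotient of a reduced affine $k$-group scheme; and $P(\omega)$ is reduced because $\mathrm{Rep}(P(\omega))$ is generated, as a tannakian category, by the restrictions of stratified bundles, each of these being a Frobenius pull-back since a stratified bundle is an $F$-divided sheaf, so that the reducedness criterion of \cite{DosSantos} applies once more.

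For the unipotence of $P(\omega)$ the crucial reduction is to the assertion that \emph{every simple stratified bundle on $k\llparen t\rrparen$ is regular singular}. Indeed, $P(\omega)$ is normal in $G:=\pi_1^{\Strat}(k\llparen t\rrparen,\omega)$, so by Clifford's theorem (restriction of a simple representation to a normal subgroup scheme is semisimple) the object $E|_{P(\omega)}$ is semisimple for every simple $E\in\mathrm{Rep}(G)=\Strat(k\llparen t\rrparen)$; and since the essential image of the restriction functor $\mathrm{Rep}(G)\to\mathrm{Rep}(P(\omega))$ generates $\mathrm{Rep}(P(\omega))$ (the quotient $G/P(\omega)=\pi_1^{\rs}(k\llparen t\rrparen,\omega)$ being affine), every simple $P(\omega)$-representation is a direct summand of $E|_{P(\omega)}$ for some \emph{simple} $E$. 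As $E|_{P(\omega)}$ is trivial precisely when $E$ lies in the essential image of $\mathrm{Rep}(\pi_1^{\rs}(k\llparen t\rrparen,\omega))\to\mathrm{Rep}(G)$, that is precisely when $E$ is regular singular, we conclude that $P(\omega)$ has no nontrivial simple representation --- equivalently, is unipotent --- if and only if every simple stratified bundle on $k\llparen t\rrparen$ is regular singular.

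That $k\llparen t\rrparen$ admits no simple irregular stratified bundle is the heart of the matter and the one place where positive characteristic is genuinely used: over $\C$ there exist simple irregular differential modules, such as $e^{1/t}$, whereas here wild ramification of stratified bundles lies entirely in non-semisimple structure --- for instance the push-forward of $\mathcal{O}$ along an Artin--Schreier covering is unipotent rather than a sum of rank-one objects, because $k^{\times}$ has no $p$-torsion. I would prove the assertion using Theorem~\ref{thm:equivalence}: a simple stratified bundle on $k\llparen t\rrparen$ is the restriction of a simple special stratified bundle $E$ on $\G_m$; the definition of ``special'' (Section~\ref{sec:specialStrat}) makes $E$ regular singular along one puncture, and along the other one has to show that the wild part of the local monodromy of $E$ acts through a unipotent quotient of its monodromy group --- mirroring the Katz--Gabber description of special coverings, whose Galois group is an extension of a tame cyclic group by a $p$-group and whose irreducible $k$-representations are therefore all $1$-dimensional tame characters. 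Since $E$ is simple, Clifford's theorem then forces this unipotent part to act trivially, so that $E$ is regular singular along both punctures, and in particular after restriction to $k\llparen t\rrparen$.

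The main obstacle is exactly the step ``the wild part of the local monodromy is unipotent'': this must be established without circularity, because the stratified wild inertia of $k\llparen t\rrparen$ is precisely $P(\omega)$. I expect it to fall out of the local structure theory and the $\Rlim$-computations of Sections~\ref{sec:Rlim}--\ref{sec:proof}, which already describe the discrepancy between a stratified bundle on $k\llparen t\rrparen$ and its maximal regular singular quotient. A purely local alternative would run through a break decomposition of iterated differential modules on $k\llparen t\rrparen$: a simple object of positive slope would, after a tame base change, acquire a rank-one quotient of positive slope, but every rank-one stratified bundle on $k\llparen t\rrparen$ is of the form ``$t^a$'' with $a\in\Z_p$, hence regular singular.
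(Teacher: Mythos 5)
Your Tannakian construction of the sequence and your treatment of reducedness are essentially the paper's: the paper also defines $P(\omega)$ as the kernel of the faithfully flat map induced by the inclusion $\Strat^{\rs}(k\llparen t\rrparen)\subset\Strat(k\llparen t\rrparen)$, and deduces reducedness of $P(\omega)$ from the fact that Frobenius pullback is an equivalence on both categories, so that the relative Frobenius of $P(\omega)$ is an isomorphism. The problem is the unipotence of $P(\omega)$. Your Clifford-theoretic reduction to the assertion that every \emph{simple} stratified bundle on $k\llparen t\rrparen$ is regular singular is correct, but you then leave that assertion unproved: you explicitly call the required input (``the wild part of the local monodromy is unipotent'') the main obstacle, and you offer only speculation about how it might follow from Theorem~\ref{thm:equivalence} or from a break decomposition. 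As written this is a genuine gap, and the detour through special bundles on $\G_m$ is both unnecessary and, as you yourself observe, dangerously close to circular, since the object you would need to control there is exactly $P(\omega)$.

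The missing fact is purely local and is already quoted in the paper: by \cite[Prop.~6.3]{Matzat} (see Remark~\ref{rem:MainThm}, \ref{rem:MainThm:c}) every stratified bundle on $k\llparen t\rrparen$ admits a filtration with graded pieces of rank $1$, and by Proposition~\ref{prop:rank1} every rank-$1$ object is isomorphic to some $\mathcal{O}_{k\llparen t\rrparen}(\alpha)$, which is regular singular. Hence every simple object has rank $1$ and is regular singular --- the statement you needed --- and in fact this makes Clifford's theorem superfluous. The paper argues directly: every object of $\Repf_k P(\omega)$ is a subquotient of $i^*E$ for some $E\in\Strat(k\llparen t\rrparen)$ by \cite[Prop.~2.21]{DeligneMilne}; such an $E$ is a successive extension of rank-$1$, hence regular singular, objects, each of which restricts trivially to $P(\omega)$; therefore $i^*E$, and with it every subquotient, is a successive extension of trivial objects. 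If you replace your final two paragraphs by this appeal to \cite[Prop.~6.3]{Matzat} and Proposition~\ref{prop:rank1}, your argument closes.
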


Theorem \ref{thm:ses} should be seen as a generalization of the usual short
exact sequence of profinite groups
\begin{equation}\label{eq:profinite-ses}\begin{tikzcd}
		1\rar&P\rar&\Gal(k\llparen t\rrparen^{\sep}/k\llparen t\rrparen)\rar& \widehat{\Z}^{(p')}\rar&1.
	\end{tikzcd}
\end{equation}
with the wild ramification group $P$ the unique pro-$p$-Sylow subgroup of
$\Gal(k\llparen t\rrparen^{\sep}/k\llparen t\rrparen)$.
In fact, \eqref{eq:profinite-ses} can be obtained up to inner automorphism
from \eqref{eq:ses} via profinite completion.

\begin{theorem}\label{thm:A1}
	Let $x\in \G_m$ be a closed point, and denote by
	$\omega_x: \Strat^{\unip}(\A^1)\rightarrow \Vectf_k$ the associated
	fiber functor. By Theorem \ref{thm:equivalence}, this induces a neutral
	fiber functor for
	$\Strat^{\unip}(k\llparen t \rrparen)$, which we also denote by
	$\omega_x$.
	The inclusion
	$k[t]\subset k\llparen t^{-1}\rrparen$
	induces an isomorphism
	\[\pi_1^{\unip}(k\llparen t^{-1}\rrparen,\omega_x)\xrightarrow{\cong}
		\pi_1^{\unip}(\A^1_k,\omega_x).\]
	       %
\end{theorem}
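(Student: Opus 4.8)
The plan is to show that the restriction functor
\[\restr\colon\Strat^{\unip}(\A^1)\longrightarrow\Strat^{\unip}(k\llparen t^{-1}\rrparen)\]
induced by $k[t]\subset k\llparen t^{-1}\rrparen$ is an equivalence of $k$-linear tensor categories, and then to invoke Tannaka duality. The fibre functor $\omega_x$ on $\Strat^{\unip}(\A^1)$ and the fibre functor on $\Strat^{\unip}(k\llparen t^{-1}\rrparen)$ furnished by Theorem \ref{thm:equivalence} are compatible under $\restr$: a unipotent bundle on $\A^1$ restricts to a special, regular singular at $0$ bundle on $\G_m$, so by the uniqueness in the analogue \eqref{eq:oppositeEq} of Theorem \ref{thm:equivalence} its image under \eqref{eq:oppositeEq} agrees with its restriction to $\G_m$, on which $\omega_x$ is the fibre at $x$. (In any case, over the algebraically closed field $k$ any two fibre functors are isomorphic, so the induced morphism of group schemes is well defined up to an inner automorphism.)

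Since both categories are rigid tensor categories generated by iterated self-extensions of $\mathbf 1$, it suffices to prove that
\[\restr\colon\Ext^i_{\Strat(\A^1)}(\mathbf 1,N)\longrightarrow\Ext^i_{\Strat(k\llparen t^{-1}\rrparen)}\bigl(\mathbf 1,N|_{k\llparen t^{-1}\rrparen}\bigr)\]
is bijective for every unipotent stratified bundle $N$ on $\A^1$ and every $i\geq 0$: full faithfulness is the case $i=0$ applied to internal Homs (note that $M^\vee\otimes N$ is again unipotent), and essential surjectivity follows by induction on the length of a unipotent $M$ on $k\llparen t^{-1}\rrparen$, lifting an extension $0\to\mathbf 1\to M\to M'\to 0$ via a lift of its class in $\Ext^1_{\Strat(k\llparen t^{-1}\rrparen)}(M',\mathbf 1)\cong\Ext^1_{\Strat(\A^1)}(\widetilde M',\mathbf 1)$, where $\widetilde M'$ is a lift of $M'$ supplied by the inductive hypothesis. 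Granting the case $N=\mathbf 1$ --- namely bijectivity for $i=0,1$ together with $\Ext^{\geq 2}_{\Strat}(\mathbf 1,\mathbf 1)=0$ on both sides --- the general case follows by induction on the length of $N$ via the long exact $\Ext$-sequences and the five lemma.

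For $N=\mathbf 1$ I would compute via Frobenius descent together with the description of these $\Ext$-groups as $\varprojlim$ and $\Rlim$ of truncated invariants from Sections \ref{sec:Rlim}--\ref{sec:stratAndRlim}: presenting a stratified bundle as a Frobenius-descent tower gives short exact sequences $0\to\Rlim_n H^{i-1}(X,\mathcal O_X)\to\Ext^i_{\Strat}(\mathbf 1,\mathbf 1)\to\varprojlim_n H^i(X,\mathcal O_X)\to 0$, with transition maps induced by Frobenius. Since $\A^1$ is affine and $k\llparen t^{-1}\rrparen$ is a field, $H^{\geq 1}(X,\mathcal O_X)=0$, so $\Ext^{\geq 2}_{\Strat}(\mathbf 1,\mathbf 1)=0$ and $\Ext^0_{\Strat}(\mathbf 1,\mathbf 1)=k$ on both sides, while
\[\Ext^1_{\Strat(\A^1)}(\mathbf 1,\mathbf 1)=\Rlim_n\bigl(k[t],\Frob\bigr),\qquad\Ext^1_{\Strat(k\llparen t^{-1}\rrparen)}(\mathbf 1,\mathbf 1)=\Rlim_n\bigl(k\llparen t^{-1}\rrparen,\Frob\bigr),\]
where, up to a bijective coefficient twist coming from the perfect field $k$, the transition map sends $t\mapsto t^p$; the restriction map on $\Ext^1$ is the one induced by $k[t]\subset k\llparen t^{-1}\rrparen$, which commutes with these. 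Applying $\varprojlim$ and $\Rlim$ to the short exact sequence of towers $0\to(k[t],\Frob)\to(k\llparen t^{-1}\rrparen,\Frob)\to(Q_\bullet,\Frob)\to 0$, where $Q=k\llparen t^{-1}\rrparen/k[t]\cong t^{-1}k[[t^{-1}]]$, and noting that $\varprojlim(k[t],\Frob)\to\varprojlim(k\llparen t^{-1}\rrparen,\Frob)$ is the identity $k\to k$, one sees that the restriction map on $\Ext^1$ is bijective if and only if $\varprojlim Q_\bullet=0$ and $\Rlim Q_\bullet=0$. The first holds because an element of $\varprojlim Q_\bullet$ has first entry supported on negative integers divisible by $p^n$ for all $n$, hence equal to $0$; the second holds because, for $(r_n)_n\in\prod_n Q$, the series $q_n:=\sum_{m\geq 0}\Frob^m(r_{n+m})$ converges in the $(t^{-1})$-adically complete module $Q$ and solves $q_n-\Frob(q_{n+1})=r_n$.

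The step I expect to be the main obstacle is this last one: setting up the $\Rlim$-description of $\Ext$-groups and Frobenius descent uniformly for the non-finite-type ring $k\llparen t^{-1}\rrparen$, bookkeeping the Frobenius twists correctly, and --- above all --- establishing $\Rlim Q_\bullet=0$, the acyclicity of the obstruction tower $\bigl(k\llparen t^{-1}\rrparen/k[t],\Frob\bigr)$. This is precisely the assertion that passing from $\A^1$ to the formal punctured disc at $\infty$ loses no $\Strat$-theoretic information in the unipotent range, and it rests on the completeness of $t^{-1}k[[t^{-1}]]$.
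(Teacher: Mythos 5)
Your route is genuinely different from the paper's. The paper deduces Theorem \ref{thm:A1} in a few lines by composing two equivalences it already has: $\Strat^{\unip,\rsn}(\G_m)\xrightarrow{\cong}\Strat^{\unip}(k\llparen t^{-1}\rrparen)$, which is the unipotent part of the $0\leftrightarrow\infty$ variant \eqref{eq:oppositeEq} of Theorem \ref{thm:equivalence}, and $\Strat^{\unip}(\A^1_k)\xrightarrow{\cong}\Strat^{\unip,\rsn}(\G_m)$, which holds because a unipotent bundle regular singular at $0$ has exponents $0\in\Z_p/\Z$ and therefore extends across the origin by \cite[Cor.~5.4]{Kindler/FiniteBundles}. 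You instead attack the composite restriction functor directly by an $\Ext$-computation, bypassing both Theorem \ref{thm:equivalence} and the extension-across-$0$ result. Your central computation is correct, and it is in substance the untwisted ($\alpha=0$) case of the paper's Lemma \ref{lemma:triviality}: the identification of $\Ext^1(\mathbf 1,\mathbf 1)$ with $\Rlim_n\G_a(R^{p^n})\cong\Rlim(R,\Frob)$ is Proposition \ref{prop:stratAndRlim} for $U_2$, your $\varprojlim Q_\bullet=\Rlim Q_\bullet=0$ for $Q=t^{-1}k\llbracket t^{-1}\rrbracket$ is exactly what parts \ref{lemma:triv0}--\ref{lemma:trivGm} of that lemma encode, and your telescoping solution $q_n=\sum_{m\geq 0}\Frob^m(r_{n+m})$ is the sum \eqref{eq:ysum}. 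So the length-$2$ case of your argument is sound and recovers what the paper gets from Lemma \ref{lemma:twistedRank2} with trivial twist.

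The gap is in the induction step. To propagate bijectivity on $\Ext^1(\mathbf 1,N)$ up the length filtration of $N$ via the long exact $\Ext$-sequence and the five lemma, you need injectivity of restriction on $\Ext^2_{\Strat}(\mathbf 1,\mathbf 1)$, which you propose to extract from short exact sequences $0\to\Rlim_n H^{i-1}(X,\mathcal O_X)\to\Ext^i_{\Strat}(\mathbf 1,\mathbf 1)\to\varprojlim_n H^i(X,\mathcal O_X)\to 0$ for all $i$. For $i\leq 1$ this is available (Proposition \ref{prop:stratAndRlim}), but for $i\geq 2$ no such Milnor-type sequence is established anywhere in the paper, and setting it up --- a spectral sequence $R^p\varprojlim\Ext^q_{\mathcal O}\Rightarrow\Ext^{p+q}_{\Strat}$ degenerating because $R^{\geq 2}\varprojlim$ vanishes, valid also over the non-finite-type ring $k\llparen t^{-1}\rrparen$ where $\Strat$ is defined as iterated differential modules --- is a genuine piece of unfinished work, not merely the bookkeeping you flag. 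Note that only essential surjectivity is affected: full faithfulness needs the five lemma only through $\Ext^1(\mathbf 1,\mathbf 1)$, which you do control. The cleanest repair is to observe that the machinery of Sections \ref{sec:Rlim}--\ref{sec:stratAndRlim} exists precisely to avoid $\Ext^2$: replacing the length filtration of $N$ by the central series $G_0\subset G_1\subset\cdots\subset U_r$ and the long exact $\Ext$-sequence by the strongly exact sequences of pointed sets from Lemma \ref{lemma:Rlim}, \ref{item:Rlim3}, together with Lemma \ref{lemma:snake}, your base computation $\Rlim Q_\bullet=0$ propagates to all ranks with no higher-$\Ext$ input; this is exactly the induction carried out in the proof of Lemma \ref{lemma:Gis}.
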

There are statements analogous to Theorem \ref{thm:A1} in various contexts:
In particular, see \cite[Prop.~1.4.2]{Katz/LocalToGlobal} for the case of the maximal pro-$p$
quotient of the \'etale fundamental group, and
\cite{Unver/PurelyIrregular} for the case of ``purely irregular'' connections over the complex
numbers.

We also obtain a coproduct formula analogous to \cite[p.~98]{Katz/LocalToGlobal}
and \cite[Thm.~2.10]{Unver/PurelyIrregular}:
\begin{theorem}\label{thm:coproduct}
	Let $x\in \G_m$ be a closed point and
	$\omega_x:\Strat^{\unip}(\G_m)\rightarrow \Vectf_k$ the associated
	fiber functor. By Theorem \ref{thm:equivalence}, $\omega_x$ induces
	fiber functors on $\Strat^{\unip}(k\llparen t\rrparen)$ and
	$\Strat^{\unip}(k\llparen t^{-1}\rrparen)$, which we also denote by
	$\omega_x$. The inclusions
	$k[t^{\pm 1}]\subset k\llparen t\rrparen$ and $k[t^{\pm
	1}]\subset k\llparen t^{-1}\rrparen$ induce an isomorphism
	\[ \pi_1^{\unip}(k\llparen
		t\rrparen,\omega_x)\ast^{\unip}\pi_1^{\unip}(k\llparen
		t^{-1}\rrparen,\omega_x)\xrightarrow{\cong}\pi_1^{\unip}(\G_m,\omega_x)
		\]
		where $\ast^{\unip}$ denotes the coproduct in the category of
	unipotent affine $k$-group schemes, see Definition
	\ref{defn:coproduct}.
\end{theorem}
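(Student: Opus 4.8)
The plan is to build the comparison morphism from the universal property of $\ast^{\unip}$ and then to recognize it as an isomorphism by a cohomological argument whose heart is an $\Rlim$-vanishing statement for a Frobenius tower of complete local rings. First, the two restriction functors $\Strat^{\unip}(\G_m)\to\Strat^{\unip}(k\llparen t\rrparen)$ and $\Strat^{\unip}(\G_m)\to\Strat^{\unip}(k\llparen t^{-1}\rrparen)$ are exact $\otimes$-functors; one checks --- using Theorem \ref{thm:equivalence}, resp.\ its mirror \eqref{eq:oppositeEq}, to describe how $\omega_x$ is transported to the local categories, and using that over an algebraically closed field any two fibre functors on a Tannakian category are isomorphic --- that they can be made compatible with $\omega_x$. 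Tannaka duality then produces morphisms $\pi_1^{\unip}(k\llparen t\rrparen,\omega_x)\to\pi_1^{\unip}(\G_m,\omega_x)$ and $\pi_1^{\unip}(k\llparen t^{-1}\rrparen,\omega_x)\to\pi_1^{\unip}(\G_m,\omega_x)$, and since $\pi_1^{\unip}(\G_m,\omega_x)$ is pro-unipotent the universal property of the coproduct (Definition \ref{defn:coproduct}) assembles these into the morphism $\phi$ of the statement. It remains to prove $\phi$ is an isomorphism.

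As both source and target of $\phi$ are pro-unipotent affine $k$-group schemes, it suffices that $\phi$ induce an isomorphism on $H^{1}(-,\G_a)$ and a monomorphism on $H^{2}(-,\G_a)$. Under the Tannakian identification $H^{i}(\pi_{1}^{\unip}(-),\G_a)=\Ext^{i}(\mathbf 1,\mathbf 1)$ --- with $\Ext^{1}$ the same computed in the unipotent subcategory or in all of $\Strat$, since an extension of $\mathbf 1$ by $\mathbf 1$ is automatically unipotent, and $\Ext^{2}$ of the unipotent fundamental group injecting into that of the full stratified fundamental group --- and computing the $H^{i}$ of the coproduct by the Mayer--Vietoris sequence of a free product, the statement reduces to: (a) the restriction map
\[\Ext^{1}_{\Strat(\G_m)}(\mathbf 1,\mathbf 1)\longrightarrow\Ext^{1}_{\Strat(k\llparen t\rrparen)}(\mathbf 1,\mathbf 1)\oplus\Ext^{1}_{\Strat(k\llparen t^{-1}\rrparen)}(\mathbf 1,\mathbf 1)\]
is bijective, and (b) $\Ext^{2}_{\Strat(\G_m)}(\mathbf 1,\mathbf 1)=0$. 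The $\Rlim$-formalism of Sections \ref{sec:Rlim}--\ref{sec:stratAndRlim} turns both into statements about projective systems along the Frobenius tower: over a $1$-dimensional affine base the $\Rlim$--$\varprojlim$ exact sequence computing $\Ext^{\bullet}(\mathbf 1,\mathbf 1)$ leaves, in degree $1$, only $\Rlim$ of the tower $A$ of de Rham $H^{0}$'s --- because the higher de Rham cohomologies along the tower have zero transition maps (Frobenius pullback annihilates $\Omega^{1}$) --- and this same vanishing yields (b). So the content is (a), i.e.: with $A=\big(H^{0}_{\mathrm{dR}}(\G_m^{(p^{n})})\big)_{n}$ the Frobenius tower of constants on $\G_m$ and $B$ the analogous tower built from $k\llparen t\rrparen$ and $k\llparen t^{-1}\rrparen$, the diagonal map $A\to B$ induces an isomorphism on $\Rlim$.

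This is the crux. The map $A\to B$ fits in a short exact sequence $0\to A\to B\to C\to 0$ of projective systems, and an adelic/Mittag-Leffler computation of the cokernel for $\P^{1}$ with the two marked points $0,\infty$ identifies $C$, up to the constant system $k$, with the Frobenius tower $\big(\widehat{\mathcal O}_{0}^{(p^{n})}\oplus\widehat{\mathcal O}_{\infty}^{(p^{n})}\big)_{n}$ of completed local rings. The decisive point is $\varprojlim C=k$ and $\Rlim C=0$: the vanishing of $\Rlim$ is a statement of Mittag-Leffler type reflecting that completed local rings are ``complete enough'' along Frobenius (a convergent-series argument, since the transition maps $\widehat{\mathcal O}^{(p^{n+1})}\hookrightarrow\widehat{\mathcal O}^{(p^{n})}$ are surjective on positive-order parts), and it is precisely the kind of vanishing that Section \ref{sec:Rlim} is designed to deliver. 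Granting it, the long exact sequence of $0\to A\to B\to C\to 0$ degenerates to $0\to\varprojlim A\to\varprojlim B\to\varprojlim C\to\Rlim A\to\Rlim B\to 0$ in which $\varprojlim B\to\varprojlim C$ is the surjection ``difference of constant terms'', so the connecting map $\varprojlim C\to\Rlim A$ is zero and $\Rlim A\xrightarrow{\ \cong\ }\Rlim B$, as needed. I expect the real obstacle to be exactly this $\Rlim$-vanishing for the tower of completed local rings (together with the $\omega_x$-compatibility in the first step), not the surrounding bookkeeping. One may also repackage the whole argument by first using Theorem \ref{thm:A1} and its mirror to trade the two local categories for $\Strat^{\unip}$ of the two affine lines compactifying $\G_m$, whereupon the statement reads as a van Kampen decomposition along two charts with overlap $\Strat^{\unip}(\P^{1})=\Vectf_k$; the analytic core is the same.
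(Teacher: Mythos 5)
Your argument splits into an abelian part and a non-abelian reduction, and the gap lies in the reduction. The abelian part is fine, and in fact attractive: the statement you call (a) is exactly the $r=2$ case of the surjectivity lemma the paper proves at the end of Section \ref{sec:applications}, and your derivation of it from the six-term $\varprojlim/\Rlim$ sequence of $0\to \{k[t^{\pm 1}]^{p^n}\}\to\{k\llparen t\rrparen^{p^n}\oplus k\llparen t^{-1}\rrparen^{p^n}\}\to \{C_n\}\to 0$, with $\Rlim_n k\llbracket t^{p^n}\rrbracket=0$ (split off the constant terms, then sum the positive-order parts $t$-adically), is a clean alternative to the paper's explicit support manipulations in Lemma \ref{lemma:triviality}. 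But the passage from this to the full theorem rests on three assertions none of which is available off the shelf for unipotent affine group schemes in characteristic $p$: (i) the Stallings-type criterion that a morphism of pro-unipotent group schemes inducing an isomorphism on $H^1(-,\G_a)$ and a monomorphism on $H^2(-,\G_a)$ is an isomorphism --- in characteristic $p$ the relevant cohomology is Hochschild cohomology, $\Hom(-,\G_a)$ is a module over the twisted polynomial ring $k\{F\}$ and is not finite dimensional, and the kernel of $\phi$ is not a priori reduced; (ii) the Mayer--Vietoris statement $H^2(G_1\ast^{\unip}G_2,\G_a)\hookrightarrow H^2(G_1,\G_a)\oplus H^2(G_2,\G_a)$, which for the Tannakian coproduct of Definition \ref{defn:coproduct} has no tree/group-ring proof and is nowhere established; and (iii) the claims that $\Ext^2_{\Strat(\G_m)}(\mathbf 1,\mathbf 1)=0$ and that $H^2(\pi_1^{\unip}(\G_m),\G_a)$ injects into $H^2(\pi_1^{\Strat}(\G_m),\G_a)$. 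For (iii), note that by the five-term sequence of $1\to N\to\pi_1^{\Strat}(\G_m)\to\pi_1^{\unip}(\G_m)\to 1$ the kernel of inflation on $H^2$ is $\Hom(N,\G_a)^{\pi_1^{\unip}}$ (since $H^1$ of the quotient already equals $H^1$ of the total group, every self-extension of $\mathbf 1$ being unipotent), and there is no reason offered for this to vanish; likewise the paper's $\Rlim$ formalism computes $\Ext^1(\mathbf 1,\mathbf 1)$ by an explicit cocycle description but gives no handle on Yoneda $\Ext^2$ in $\Strat(\G_m)$, so the parenthetical ``Frobenius kills $\Omega^1$'' does not amount to a proof of (b). Each of these three points needs an argument of roughly the weight of what it is meant to replace.

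The paper takes a different route that avoids $H^2$ entirely. Faithful flatness of $\phi$ is proved by the normalizer trick (Lemma \ref{lemma:normalizer}) together with the observation of Remark \ref{rem:MainThm} that a unipotent stratified bundle trivial on both $k\llparen t\rrparen$ and $k\llparen t^{-1}\rrparen$ is trivial; this is the group-scheme avatar of your $H^1$-injectivity. The closed-immersion half is then obtained not cohomologically but by showing the functor $E\mapsto (E|_{k\llparen t\rrparen},E|_{k\llparen t^{-1}\rrparen},E|_x)$ is essentially surjective, which reduces to the surjectivity of $(\phi_r^+,\phi_r^-)$ on $\Rlim_n U_r$; this is proved by induction on $r$ using the central filtration $G_i\subset U_r$, the strong exactness of Lemma \ref{lemma:Rlim}\ref{item:Rlim3} and the snake lemma \ref{lemma:snake}, with your statement (a) as the base case. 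If you want to salvage your approach, the most economical fix is to keep your computation of (a) but replace the $H^2$ dévissage by this explicit non-abelian induction.
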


\vspace{.3cm}
\emph{Acknowledgments:} The author thanks H.~Esnault and K.~R\"ulling for
helpful discussions and the referee for a careful reading of the manuscript.

\section{Derived inverse limits of nonabelian groups}\label{sec:Rlim}
We first recall the some notions from ``homological algebra'' for
noncommutative groups.

\begin{definition}[{\cite[IX.2]{Bousfield}}]
	If $G_1\xleftarrow{f_1} G_2\xleftarrow{f_2} \ldots$ is a projective system of
	groups, then we define the pointed set $\Rlim_n G_n$ as
	follows: The product $\prod_{n\in\N} G_n$ acts on itself from the left
	via
	\[(g_1,\ldots, g_n,\ldots)\cdot (x_1,\ldots,
		x_n,\ldots):=(g_1x_1f_1(g_2)^{-1},g_2x_2f_2(g_3)^{-1},\ldots),\]
	and $\Rlim_n G_n$ is the set of orbits of this action, pointed by the
	orbit containing $1\in \prod_n G_n$.
	This reduces to the usual definition of $\Rlim$ if the
	$G_n$ are abelian.
\end{definition}

\begin{remark}\label{remark:naturalnumbers}
	The general results about derived inverse limits of projective systems
	of arbitrary groups will follow from the following construction, due
	to Ogus (\cite{Hartshorne/derham}): Equip $\mathbb{N}$ with the
	topology in which the open sets are $\emptyset, \mathbb{N}$ and
	$[1,n], n\geq 1$. Then the category of projective systems of groups
	is equivalent to the category of sheaves of groups on this topological
	space. We can hence apply the theory of \cite[Ch.~3]{Giraud}. In fact,
	if we write $G$ for the sheaf on $\N$ associated with the
	projective system $\{G_n\}$,  then
	it is not difficult to functorially identify the pointed set $\Rlim_n G_n$ with
	the pointed set $H^1(\N, G)$, which by definition is the pointed set
	of isomorphism classes of $G$-torsors in the category of sheaves on
	$\N$.
\end{remark}

\begin{lemma}\label{lemma:Rlim}
We list some properties of the functor $\Rlim$:
\begin{enumerate}
	\item\label{item:Rlim1} If $G_1\xleftarrow{f_1} G_2\xleftarrow{f_2} \ldots$ is a projective
	system of groups with surjective transition morphisms, then $\Rlim_n
	G_n=0$.
\item\label{item:Rlim2} If
	\[
		\begin{tikzcd}
			1\rar& \{A_n\}\rar& \{B_n\}\rar& \{C_n\}\rar& 1
	\end{tikzcd}\]
	is a short exact sequence of projective systems of groups, then
	there is a functorial exact sequence of pointed sets
	\[
		\begin{tikzcd}[column sep=.26cm]
			1\rar& \varprojlim A_n\rar& \varprojlim B_n \rar& \varprojlim C_n\rar& \Rlim A_n\rar& \Rlim B_n\rar& \Rlim C_n\rar& 1.
		\end{tikzcd}
		\]
	Moreover, the map $\varprojlim C_n\rightarrow \Rlim A_n$ extends
	to a functorial action of $\varprojlim C_n$ on $\Rlim A_n$, which
	induces an injection
	\[\begin{tikzcd}
			\left(\varprojlim_n C_n\right)\backslash\Rlim A_n\rar[hook]& \Rlim B_n.
		\end{tikzcd}
		\]
\item\label{item:Rlim3} If in \ref{item:Rlim2} the groups $A_n$ are
	\emph{central} subgroups of $B_n$, then the map
	\[\Rlim A_n\rightarrow \Rlim B_n\] extends to a functorial action of the
	\emph{abelian group} $\Rlim
A_n$ on the pointed set $\Rlim B_n$, which  induces a bijection
\[\left(\Rlim A_n\right) \backslash \Rlim B_n\rightarrow \Rlim C_n.\]
\end{enumerate}
\end{lemma}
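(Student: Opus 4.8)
The plan is to prove all three items directly from the explicit ``$1$-cocycle'' description of $\Rlim$, using the sheaf-theoretic reformulation of Remark~\ref{remark:naturalnumbers} only as a guiding picture: each statement is an instance of Giraud's non-abelian cohomology sequence for the sheaf on $\N$ attached to a projective system (\cite[Ch.~3]{Giraud}), and one could in principle deduce them from loc.\ cit., but the needed verifications are short and self-contained. I write a class in $\Rlim_n G_n$ as $[(x_n)]$, where $(x_n)\in\prod_n G_n$ and $(x_n)\sim(g_n x_n f_n(g_{n+1})^{-1})_n$ for $(g_n)\in\prod_n G_n$, the base point being $[(1)]$. For item~\ref{item:Rlim1}, one observes that the orbit of $(1)$ consists exactly of the sequences $(g_n f_n(g_{n+1})^{-1})_n$; so given $(x_n)$ one chooses successively $g_1:=1$ and $g_{n+1}\in f_n^{-1}(x_n^{-1}g_n)$, which is possible because $f_n$ is surjective, and then $(g_n)$ carries $(1)$ to $(x_n)$, whence $\Rlim_n G_n$ is the one-point set.

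For item~\ref{item:Rlim2}, exactness of $1\to\varprojlim A_n\to\varprojlim B_n\to\varprojlim C_n$ is the left-exactness of $\varprojlim$. The connecting map $\partial\colon\varprojlim C_n\to\Rlim A_n$ is defined by lifting each $c_n$ to some $b_n\in B_n$ (possible since $B_n\twoheadrightarrow C_n$), setting $a_n:=b_n f_n(b_{n+1})^{-1}$ — which lies in $A_n$ because it maps to $1$ in $C_n$ — and putting $\partial((c_n)):=[(a_n)]$; independence of the chosen lifts, using that $A_n$ is normal in $B_n$, is a direct computation. Surjectivity of $\Rlim B_n\to\Rlim C_n$ is automatic, because $\prod_n B_n\to\prod_n C_n$ is already onto and $\prod_n B_n\to\Rlim B_n$ is surjective by definition. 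Exactness along the rest of the sequence is a string of diagram chases with these formulas: at $\varprojlim C_n$, if $\partial((c_n))=[(1)]$ one divides a lift $(b_n)$ by a suitable $(\alpha_n)\in\prod_n A_n$ to obtain an element of $\varprojlim B_n$ over $(c_n)$; at $\Rlim A_n$, the class $[(a_n)]$ dies in $\Rlim B_n$ precisely when $(a_n)=(b_n f_n(b_{n+1})^{-1})_n$ for some $(b_n)\in\prod_n B_n$, whose image in $\prod_n C_n$ then lies in $\varprojlim C_n$ and is sent to $[(a_n)]$ by $\partial$; at $\Rlim B_n$, a representative of a class killed in $\Rlim C_n$ can be replaced, after twisting by a lift of a witnessing $C$-equivalence, by an $\sim$-equivalent one lying in $\prod_n A_n$. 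Finally the action of $\varprojlim C_n$ on $\Rlim A_n$ is $(c_n)\cdot[(a_n)]:=[(b_n a_n f_n(b_{n+1})^{-1})]$ with $b_n$ a lift of $c_n$ (the sequence again lands in $\prod_n A_n$), and the asserted injection of $(\varprojlim_n C_n)\backslash\Rlim A_n$ into $\Rlim B_n$ is the statement that two classes of $\Rlim A_n$ coincide in $\Rlim B_n$ exactly when they lie in a common orbit: a $B$-equivalence $(a_n')=(\beta_n a_n f_n(\beta_{n+1})^{-1})_n$ between two sequences in $\prod_n A_n$ forces $(\bar\beta_n)\in\varprojlim C_n$ and this element moves $[(a_n)]$ to $[(a_n')]$, while conversely the action preserves the image in $\Rlim B_n$ by construction.

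For item~\ref{item:Rlim3}, when $A_n$ is central in $B_n$ the system $\{A_n\}$ consists of abelian groups, so $\Rlim A_n$ is an abelian group; centrality makes $[(a_n)]\cdot[(b_n)]:=[(a_n b_n)]$ a well-defined action of $\Rlim A_n$ on $\Rlim B_n$ (well-definedness is a single computation moving central factors past one another), and since $[(a_n)]\cdot[(1)]=[(a_n)]$ the orbit of the base point is the image of the inclusion $\Rlim A_n\to\Rlim B_n$ from item~\ref{item:Rlim2}. That the induced map $(\Rlim A_n)\backslash\Rlim B_n\to\Rlim C_n$ is a bijection then comes down to surjectivity, already known, and injectivity: if $[(b_n)]$ and $[(b_n')]$ have the same image in $\Rlim C_n$, one first uses a witnessing $C$-equivalence together with lifts to reduce to the case $\bar b_n=\bar b_n'$ for all $n$, so that $b_n'=a_n b_n$ with $a_n\in A_n$, and then $[(a_n)]\in\Rlim A_n$ carries $[(b_n)]$ to $[(b_n')]$.

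None of these steps is deep; the real obstacle is the bookkeeping in item~\ref{item:Rlim2}, where well-definedness of $\partial$ and of the $\varprojlim C_n$-action, and exactness at each node, all rest on pushing the normality of $A_n$ in $B_n$ through a family of nearly identical manipulations. In a final write-up it may be most economical to set up the dictionary of Remark~\ref{remark:naturalnumbers} carefully and then invoke the non-abelian cohomology exact sequence of \cite[Ch.~3]{Giraud}, observing that the space $\N$ has cohomological dimension at most $1$, which is why the sequence can be continued by the surjection onto $\Rlim C_n$ and, in the central case, why nothing obstructs the description of the quotient.
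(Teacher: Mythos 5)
Your proof is correct and follows essentially the same route as the paper: part \ref{item:Rlim1} and part \ref{item:Rlim3} are verified by the same direct manipulation of representatives (in \ref{item:Rlim3}, descending the $\prod_n A_n$-action on $\prod_n B_n$ using centrality), while for part \ref{item:Rlim2} the paper simply cites \cite[III.3.2]{Giraud} or \cite[IX.2]{Bousfield} and you write out the standard cocycle-level diagram chases in full. All the explicit formulas (the connecting map, the $\varprojlim C_n$-action, and the orbit--fiber identifications) check out.
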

\begin{proof}
	From the definition of $\Rlim_n G_n$ one immediately verifies
	\ref{item:Rlim1}.
	For \ref{item:Rlim2} one either translates \cite[III.3.2]{Giraud} using
	Remark \ref{remark:naturalnumbers}, or one looks in
	\cite[IX.2]{Bousfield}. Statement \ref{item:Rlim3}  
	follows from \cite[Prop.~III.3.4.5]{Giraud}. For
	the reader's
	convenience
	we check it by hand: Of course $\prod_n A_n$ acts on
	$\prod_n B_n$, and the orbits of this action are precisely the fibers
	of $\prod_n B_n \twoheadrightarrow \prod_n C_n$.
	The centrality of $A_n$ in $B_n$ guarantees that
	this action descends to the quotient map $\Rlim_n A_n\rightarrow
	\Rlim_n B_n$. This also shows that the orbits of the action are
	precisely the fibers of $\Rlim_n B_n\rightarrow \Rlim_n C_n$, which
	completes the proof of \ref{item:Rlim3}.
\end{proof}

\begin{definition}\label{defn:strongexactness}For notational convenience, we
	say that a short exact sequence of
	pointed sets
	\[\begin{tikzcd}
			1\rar &A\rar& B \rar& C\rar& 1
		\end{tikzcd}\]
	is \emph{strongly exact}\footnote{This is nonstandard terminology.}
	if the following hold:
	\begin{itemize}
		\item $A$ is an abelian group.
		\item $A\rightarrow B$ is injective and extends to an
			action
			of $A$ on $B$, with orbits precisely the fibers of
			$B\rightarrow C$, i.e.~such that one gets an induced
			bijection $A\backslash B\rightarrow C$.
	\end{itemize}
	A \emph{morphism of strongly exact short exact sequences} is a
	morphism of short exact sequences of pointed sets
	\[
		\begin{tikzcd}
			1\rar&A\dar{f}\rar&B\rar\dar{g}&C\rar\dar{h}& 1\\
		1\rar&A'\rar&B'\rar&C'\rar& 1
		\end{tikzcd}
		\]
	such that $f$ is a homomorphism of groups and such that $g$ is
	compatible with the $A$-action on $B$ and the $A'$-action on $B'$, i.e.~$g(ab)=f(a)g(b)$ for
	$a\in A, b\in B$.
\end{definition}

We will need a weak version of the Snake lemma for strongly exact short exact
sequences of pointed sets:

\begin{lemma}\label{lemma:snake}
	Let
	\[
	\begin{tikzcd}
		1\rar&G\dar{f}\rar{i}& M \rar{j} \dar{g}&N\dar{h} \rar &1\\
		1\rar&G'\rar{i'}& M' \rar{j'} &N' \rar &1
	\end{tikzcd}
	\]
	be a morphism of \emph{strongly exact} (Definition
	\ref{defn:strongexactness}) short exact sequences of pointed
	sets.
	\begin{enumerate}[label=\emph{\alph*)},ref={\alph*)}]
		\item\label{item:snake1} If $f$ is surjective, then
			$i$ and $j$ induce a strongly exact short
			exact sequence of pointed sets
			\begin{equation}\label{kernelsequence}
				\begin{tikzcd}[column sep=1cm]
					1\rar&
				\ker(f)\rar{i|_{\ker(f)}}&
				\ker(g)\rar{j|_{\ker(g)}}&
				\ker(h)\rar& 1.
			\end{tikzcd}
		\end{equation}
		\item\label{item:snake2} If $f$ and $h$ are surjective, then
			$g$ is surjective.
		\end{enumerate}
\end{lemma}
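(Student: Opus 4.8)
The plan is to do an elementary diagram chase, being careful about the fact that the "exactness" in the bottom and top rows is only the strong exactness of pointed sets (so kernels are just preimages of the base point, and the group $A$ acts on $B$ with $A\backslash B\cong C$). First I would set up notation: write $e$ for the various base points, and recall that for a morphism of pointed sets $\ker$ means the preimage of $e$. Since $i$ is injective and identifies $G$ with $j^{-1}(e_N)$, and similarly for the primed row, the restrictions $i|_{\ker f}$ and $j|_{\ker g}$ make sense and land where claimed. For part a), injectivity of $i|_{\ker f}$ is immediate from injectivity of $i$. The key point is to establish, at the "middle" spot, that $\ker(j|_{\ker g}) = \image(i|_{\ker f})$ and that $\ker f$ acts on $\ker g$ with the orbits being exactly the fibers of $j|_{\ker g}$; this is where I would use the compatibility $g(ab)=f(a)g(b)$ together with the hypothesis that $f$ is surjective. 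Explicitly: given $m,m'\in\ker g$ with $j(m)=j(m')$, strong exactness of the top row gives $a\in A=G$ with $am=m'$; then $g(m')=f(a)g(m)=f(a)\cdot e$, and since $g(m')=e$ and $A'\to B'$ is injective (so the $A'$-action is free), $f(a)=e$, i.e. $a\in\ker f$. Surjectivity onto $\ker h$ is the remaining content and is exactly part b) applied after one checks that the preimage under $g$ of $\ker h$ lands, after translation by a suitable element of $A$, inside $\ker g$ — so really a) and b) should be proved together, or b) first.

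For part b) I would chase an element $n'\in N'$. Pick $m'\in M'$ with $j'(m')=n'$ (surjectivity of $j'$, part of strong exactness of the bottom row). Since $h$ is surjective, choose $n\in N$ with $h(n)=n'$, and since $j$ is surjective choose $m\in M$ with $j(m)=n$. Then $j'(g(m)) = h(j(m)) = h(n) = n' = j'(m')$, so by strong exactness of the bottom row there is $a'\in A'=G'$ with $a'\cdot g(m) = m'$. Now use surjectivity of $f$ to lift $a'$ to $a\in A=G$ with $f(a)=a'$; then $g(a m) = f(a) g(m) = a' g(m) = m'$, so $m'$ is in the image of $g$. This proves b). Then for a), to get surjectivity of $j|_{\ker g}$ onto $\ker h$: given $n\in\ker h$, lift to $m\in M$ with $j(m)=n$; then $j'(g(m)) = h(n) = e$, so $g(m)\in\ker j' = \image i'$, say $g(m)=i'(x)$ with $x\in G'$; lift $x$ to $y\in G$ via surjectivity of $f$, and replace $m$ by $m\cdot i(y)^{-1}$ — using the action description, or simply noting $i(y)\in\ker j$ so $j$ is unchanged — to arrange $g(m)=e$, i.e. $m\in\ker g$, with $j(m)=n$ still. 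This gives the desired surjectivity and simultaneously the statement that the sequence \eqref{kernelsequence} is strongly exact, the abelian group being $\ker f\subseteq A$ with its restricted action on $\ker g$.

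The main obstacle I expect is bookkeeping rather than a genuine difficulty: one must consistently keep track of which "kernel/image/orbit" statements are being used, and in particular exploit that the $A$-action on $B$ is free (because $A\to B$ is injective and the orbits are the fibers of $B\to C$, so a nontrivial stabilizer would contradict injectivity) — this freeness is what lets one cancel $f(a)=e \Rightarrow a\in\ker f$ above. A second small point to verify carefully is that $\ker f$ really acts on $\ker g$, i.e. that $A$-translation by an element of $\ker f$ preserves $\ker g$: if $a\in\ker f$ and $m\in\ker g$ then $g(am)=f(a)g(m)=e\cdot e=e$, so this is automatic. Apart from these, everything is a direct translation of the classical snake lemma chase into the pointed-set setting, and no homological machinery is needed; alternatively one could invoke Remark 1.3 and deduce it from the corresponding statement in \cite{Giraud}, but the hands-on argument is short enough to give directly.
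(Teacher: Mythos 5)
Your argument is correct and follows essentially the same diagram chase as the paper: part b) is word-for-word the paper's chase (lift $j'(m')$ through $h$ and $j$, then adjust by a $G'$-translation lifted through $f$), and in part a) the surjectivity onto $\ker(h)$ and the identification of the fibers of $j|_{\ker(g)}$ with $\ker(f)$-orbits are obtained exactly as in the paper, with the cancellation $f(a)\cdot 1=1\Rightarrow f(a)=1$ coming from injectivity of $i'$. The only cosmetic differences are that the paper proves a) before b) (the surjectivity step in a) is done directly rather than via b)) and writes the translation as $y'^{-1}\cdot y$ where you write $m\cdot i(y)^{-1}$; neither affects correctness.
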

\begin{proof}
	For \ref{item:snake1}, it is clear that the sequence
	\eqref{kernelsequence} is exact on the left and in the middle.
	We check that
	$\ker(g)\rightarrow \ker(h)$ is surjective. Let $x\in \ker(h)$ and let $y\in M$
	be a lift of $x$. Then $j'g(y)=1$, so $g(y)\in G'$. Let $y'\in G$ be a
	lift of $g(y)$. Then $y'^{-1}\cdot y\in \ker(g)$ is a lift of $x$.

	Moreover, $\ker(f)$ still is an abelian group, and the action of $G$
	on $M$ induces an action of $\ker(f)$ on $M$. It is straightforward to
	check that this action stabilizes $\ker(g)$. If $m,m'\in \ker(g)$ and
	$x\in \ker(f)$, such that $m=xm'$, then $j(m)=j(xm')$. Conversely, if 
	$j(m)=j(m')$, then there exists $x\in G$ such that $m=xm'$. But
	this means $1=g(m)=g(xm')=f(x)g(m')=f(x)$, so $x\in \ker(f)$, and $m$
	and $m'$ are in the same $\ker(f)$-orbit. This means that the fibers
	of $j|_{\ker(g)}$ are precisely the orbits of the $\ker(f)$-action on
	$\ker(g)$. This finishes the proof of \ref{item:snake1}.

	For \ref{item:snake2}, 
       %
	let $m'\in M'$. By the surjectivity of $h$ and $j$, there exists $m\in M$, such that
	$hj(m)=j'(m')$. Then $g(m)$ and $m'$ lie in the same $G'$-orbit of
	$M'$, say $m'=x'g(m)$ for some $x'\in G'$. There exists $x\in G$  with
	$f(x)=x'$, and $g(xm)=f(x)g(m)=x'g(m)=m'$, so $g$ is surjective.
\end{proof}
\section{Stratified bundles and $\Rlim$}\label{sec:stratAndRlim}
We continue to denote by $k$ an algebraically closed field of positive
characteristic $p$. 
\begin{definition}
	For an integer $r> 0$, we write $B_r\subset \GL_r$ for the smooth affine $k$-group scheme, such that
	for a $k$-algebra $R$ the group $B_r(R)$ is the group of
	invertible upper
	triangular $r\times r$-matrices, and $U_r\subset B_r$ for the smooth $k$-subgroup scheme
	given by the unipotent upper triangular matrices. Note that we have a split short exact sequence of group
	schemes
	\[\begin{tikzcd}1\rar& U_r \rar& B_r\rar& \G_m^{r}\rar\lar[swap, bend
		right=45]& 1\end{tikzcd}\]
	where the map $B_r\rightarrow \G_m^r$ is defined by sending an upper triangular matrix to the associated diagonal
	matrix.

	If $R$ is a $k$-algebra we write $R^{p^n}$ for the subring of
	$p^n$-th powers in $R$, i.e.~for the image of the map
	$R\xrightarrow{(\cdot)^{p^n}}R$. If is $G$ a group scheme, we write $\{G(R^{p^n})\}_{n}$ for the
	projective system with transition maps the homomorphisms
	$G(R^{p^{n+1}})\rightarrow G(R^{p^n})$ induced by the inclusion
	$R^{p^{n+1}}\hookrightarrow R^{p^n}$. To simplify notation,
	we write $\Rlim_n G(R^{p^n}):=\Rlim_n \{G(R^{p^n})\}_n$.
\end{definition}
\begin{proposition}\label{prop:stratAndRlim}Let $R=k[t^{\pm 1}]$, $k\llparen
	t\rrparen$ or $k\llparen t^{-1}\rrparen$.
	\begin{enumerate}
		\item The pointed set of isomorphism classes of stratified
			bundles of rank $r$ on $R$ can be identified with
			\[\Rlim_n \GL_r(R^{p^n}).\]
		\item The pointed set of isomorphism classes of composition
			series of stratified bundles
			\begin{equation}\label{eq:compSeries}0\subsetneqq E_1\subsetneqq
				E_2\subsetneqq\ldots\subsetneqq
				E_r\end{equation}
				with $\rank E_i/E_{i-1}=1$ (resp.~with
				$E_i/E_{i-1}\cong R$ as stratified bundles) can be identified
				with
				\[\Rlim_n
					B_r(R^{p^n})\quad\left(\text{resp.~}\Rlim_nU_r(R^{p^n})\right).\]
			\item The functorial long exact sequence obtained by
				applying $\varprojlim$ to the split short exact sequence of
				projective systems
				\[\begin{tikzcd}
						1\rar&\{U_r(R^{p^n})\}_n\rar&
						\{B_r(R^{p^n})\}_n\rar&\{\G_m^r(R^{p^n})\}_n\rar&
						1
					\end{tikzcd}
					\]
					induces a short exact sequence of pointed sets 
					\begin{equation}\label{Trsequence} 
						\begin{tikzcd}
							1\rar& \Rlim_n U_r(R^{p^n})\rar& \Rlim_n
							B_r(R^{p^n})\rar& \Rlim_n
							\G_m^{r}(R^{p^n})\rar\lar[bend
							right=20,swap]{s_R}& 1
						\end{tikzcd}\end{equation}
						with $\Rlim_n U_r(R^{p^n})\rightarrow \Rlim_n B_r(R^{p^n})$
						injective. The canonical section of the projection $B_r\rightarrow\G_m^r $ induces a
						section $s_R$ of the sequence
						\eqref{Trsequence}.
			\item\label{stratAndRlim4} The splitting $s_R$ identifies $\Rlim
				\G^r_m(R^{p^n})$ with the set of isomorphism
				classes of composition series
				\eqref{eq:compSeries} such that $E\cong
				\bigoplus_{i=1}^rE_{i+1}/E_{i}$.
			\item The above identifications and the section $s_R$
				are functorial with respect to
				the inclusions $k[t^{\pm 1}]\subset k\llparen
				t\rrparen$ and $k[t^{\pm 1}]\subset k\llparen
				t^{-1}\rrparen$.
				\end{enumerate}
\end{proposition}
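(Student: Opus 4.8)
The statement packages several identifications between isomorphism classes of stratified bundles (with or without extra structure) over $R\in\{k[t^{\pm1}],k\llparen t\rrparen,k\llparen t^{-1}\rrparen\}$ and pointed sets of the form $\Rlim_n G(R^{p^n})$ for $G\in\{\GL_r,B_r,U_r,\G_m^r\}$. The key input is the standard description of a stratified bundle on $\Spec R$ as a compatible system of descent data along the sequence of relative Frobenii, i.e.\ as a projective system of locally free $R^{p^n}$-modules $E_n$ together with isomorphisms $E_{n}\otimes_{R^{p^n}}R^{p^{n-1}}\xrightarrow{\sim}E_{n-1}$ (this is the classical reinterpretation of a $\mathscr{D}_{X/k}$-action via the Frobenius-divided/Cartier description; for $R$ a smooth $k$-algebra each $R^{p^n}\hookrightarrow R^{p^{n-1}}$ is finite locally free, and over our one-dimensional $R$'s every locally free module is free). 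The plan is to fix trivializations and turn this descent datum into a cocycle-type object living in $\prod_n G(R^{p^n})$, then recognize the equivalence relation of ``isomorphism of stratified bundles'' as exactly the $\prod_n G(R^{p^n})$-action defining $\Rlim$.

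First I would treat part (a). Pick a stratified bundle $E$ of rank $r$; choose an $R$-basis of $E_0$ and, inductively, an $R^{p^n}$-basis of each $E_n$. The transition isomorphism $E_n\otimes R^{p^{n-1}}\xrightarrow{\sim}E_{n-1}$ is then recorded by a matrix $g_n\in\GL_r(R^{p^{n-1}})$; a reindexing (absorb $g_n$ into the next comparison, as in the $\Rlim$-action formula $(g_n)\cdot(x_n)=(g_nx_nf_n(g_{n+1})^{-1})$) shows the data is an element of $\prod_n\GL_r(R^{p^n})$ well-defined up to the $\prod_n\GL_r(R^{p^n})$-action, because changing the chosen bases of the $E_n$ is precisely multiplication by an element of $\prod_n\GL_r(R^{p^n})$ acting by that formula. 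Conversely any such system builds a stratified bundle by gluing. This gives the bijection with $\Rlim_n\GL_r(R^{p^n})$, with the trivial bundle corresponding to the base point. For part (b) one does the same computation but now also records a full flag $0\subsetneq E_{1,n}\subsetneq\cdots\subsetneq E_{r,n}=E_n$ of sub-descent-data with rank-one graded pieces: choosing the bases adapted to the flag forces all transition and change-of-basis matrices to be upper triangular, giving $\Rlim_n B_r(R^{p^n})$; if moreover each graded piece is required to be $\cong R$ as a stratified bundle, one can choose the adapted bases so that the diagonal entries are all $1$, landing in $\Rlim_n U_r(R^{p^n})$. (Here one uses that over these $R$'s a rank-one stratified bundle isomorphic to the trivial one admits a horizontal generator, so the diagonal cocycle entries can be normalized.)

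Part (c) is then almost formal: the split short exact sequence $1\to U_r\to B_r\to\G_m^r\to 1$ of $k$-group schemes, evaluated on each $R^{p^n}$, yields a split short exact sequence of projective systems; applying Lemma~\ref{lemma:Rlim}\ref{item:Rlim2} gives the six-term exact sequence, and the splitting $B_r\to\G_m^r$ (diagonal part) induces compatible splittings $s_{R^{p^n}}$, hence $s_R$, and in particular makes $\Rlim_nU_r(R^{p^n})\to\Rlim_nB_r(R^{p^n})$ injective and the sequence right-exact, so \eqref{Trsequence} is short exact and split. For part (d), unwinding the splitting: an element in the image of $s_R$ corresponds under (b) to a flag whose associated cocycle is block-diagonal, i.e.\ to a composition series that is actually a direct sum $E\cong\bigoplus_i E_{i+1}/E_i$ of the rank-one pieces; conversely such a split flag has an adapted basis diagonalizing all matrices. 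Part (e) is functoriality: the inclusions $k[t^{\pm1}]\subset k\llparen t^{\pm1}\rrparen$ induce $R^{p^n}\hookrightarrow R'^{p^n}$ compatibly with everything in sight, and base change of stratified bundles corresponds on cocycles to applying $G$ to these ring maps, so all the identifications and $s_R$ commute with restriction; this just needs to be observed, not computed.

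The main obstacle is not any single hard argument but getting the bookkeeping exactly right: pinning down the precise reindexing so that the naive ``change of basis'' relation matches the asymmetric $\Rlim$-action formula $(g_1x_1f_1(g_2)^{-1},\dots)$, and, for the $U_r$ statement in (b), justifying cleanly that a rank-one stratified bundle on $R$ that is abstractly trivial has a horizontal generator (so one may normalize the diagonal). The underlying conceptual point — that $\Strat(\Spec R)$ is the category of Frobenius-divided sheaves and that isomorphism classes of such, after trivializing, form an $H^1$ computed by $\Rlim$ — is standard once the dictionary of Remark~\ref{remark:naturalnumbers} is in place.
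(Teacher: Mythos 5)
Your proposal is correct and follows essentially the same route as the paper: the Frobenius-descent description of stratified bundles (Gieseker's $(E^{(n)},\sigma_n)$ presentation), choice of (adapted) bases turning the descent data into an element of $\prod_n G(R^{p^n})$ with isomorphism matching the $\Rlim$-action, and parts (c)--(e) deduced from Lemma \ref{lemma:Rlim} and functoriality. The only cosmetic difference is that you spell out the normalization of the diagonal entries in the $U_r$ case, which the paper leaves implicit.
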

\begin{proof}
	\begin{enumerate}[label={\alph*)}]
		\item\label{item:RlimStrat1}A stratified bundle $E$ on $R$ can be described as a
			sequence of pairs
			\[( E^{(n)},\sigma_n)_{n\geq 0}\]
			where $E^{(n)}$ is a locally free finite rank module on $R^{p^n}$ and
			$\sigma_n$ an $R^{p^n}$-linear isomorphism
			\[\sigma_n:R^{p^{n}}\otimes_{R^{p^{n+1}}}
				E^{(n+1)}\xrightarrow{\cong} E^{(n)},\]
			see \cite[Thm.~1.3]{Gieseker/FlatBundles}. If
			$E':=(E'^{(n)},\sigma'_n)_{n\geq 0}$
			is a second stratified bundle, then a morphism
			of stratified bundles $E\rightarrow E'$ is a
			sequence of $R^{p^n}$-linear morphisms
			$\psi_{n}:E^{(n)}\rightarrow E'^{(n)}$, such that the
			diagrams
			\begin{equation}\label{eq:morphismOfStratifiedBundles}
			\begin{tikzcd}[column sep=.8cm]
				R^{p^n}\otimes_{R^{p^{n+1}}}
				E^{(n+1)}\rar{\sigma_n}\dar[swap]{1\otimes\psi_{n+1}}&
				E^{(n)}\dar{\psi_n}\\
				R^{p^n}\otimes_{R^{p^{n+1}}}
				E'^{(n+1)}\rar{\sigma'_n}& E'^{(n)}
			\end{tikzcd}
		\end{equation}
			commute. Since in our case all vector bundles on $R$
			are free, we may choose a basis for each $E^{(n)}$, so the
			$\sigma_n$ give rise to an element $(\sigma_n)\in
			\prod_{n\geq 0}\GL_r(R^{p^n})$, if $r=\rank E^{(n)}$. If
			$E'$ is a second stratified bundle, and
			$\psi=(\psi_n):E\rightarrow E'$ an isomorphism, then
			we see from \eqref{eq:morphismOfStratifiedBundles} that
			$\sigma_n=\psi_n^{-1}\sigma'_n\psi_{n+1}$. This shows
			that $[(\sigma_n)]=[(\sigma'_n)]\in \Rlim_n
			\GL_r(R^{p^n})$, so the class $[(\sigma_n)]$ is
			independent of the choices and $\Rlim_n \GL_r(R^{p^n})$ is the
			pointed set of isomorphism classes of rank $r$
			stratified bundles on $R$.
		\item Let $E$ and $E'$ be stratified bundles on $R$, with composition
			series
			\[0\subsetneqq E_1\subsetneqq
				E_2\subsetneqq\ldots\subsetneqq
				E_r=E, \quad 0\subsetneqq E'_1\subsetneqq
				E'_2\subsetneqq\ldots\subsetneqq
				E'_r=E'\]
				such that $\rank E_{i+1}/E_i = \rank
				E'_{i+1}/E'_i = 1$ (resp.~such that $E_{i+1}/E_i\cong
				E'_{i+1}/E'_i$ is the trivial stratified
				bundle of rank $1$). Fixing bases of
				$E^{(n)}, E'^{(n)}$,
				adapted to the composition series, the same reasoning as
				in \ref{item:RlimStrat1} shows that an isomorphism of
				the composition series corresponds to a
				sequence of elements $\psi_n\in
				B_r(R^{p^n})$ (resp.~$\psi_n\in
				U_r(R^{p^n})$). The claim follows as before.
			\item Since $\varprojlim_n B_r(R^{p^n})=B_r(k)$ and
				$\varprojlim_n \G_m(R^{p^n})=\G_m(k)$, we
				obtain the short exact sequence
				\eqref{Trsequence}. The map $\Rlim_n
				U_r(R^{p^n})\rightarrow \Rlim_n
				B_r(R^{p^n})$ is injective by Lemma
				\ref{lemma:Rlim}, \ref{item:Rlim2}. The
				section $s_R$ is induced by the functoriality
				of $\Rlim$.
			\item This is clear: The composition series
				corresponding to the image of $s_R$ are split. 
			\item This is also clear due to the functoriality of
				$\Rlim$.
	
	\end{enumerate}
\end{proof}

Next, we briefly recall the notion of regular singularity and the classification of stratified line bundles on $\G_m$ and
$k\llparen t\rrparen$ based on results of \cite{Matzat}.
\begin{definition}\label{defn:oalpha}
	\begin{enumerate}[label={\alph*)}]
		\item\label{defn:oalpha:a} Let $E$ be a stratified bundle on $\Spec k\llparen
			t\rrparen$. Following \cite[Sec.~3]{Gieseker/FlatBundles}, we
			say that $E$ is \emph{regular singular} if there
			exists a free $k\llbracket t\rrbracket$-submodule
			$\mathcal{E}$ of $E$, such that $\dim E=\rank
			\mathcal{E}$ and such that $\mathcal{E}$ is stable
			under the action of the differential operators
			$\delta_t^{(n)}$, $n\geq 0$,  given by
			$\delta^{(n)}_t(t^r)=\binom{r}{n}t^r$.
		\item Let $E$ be a stratified bundle on
			$\G_{m}=\P_k^1\setminus \{0,\infty\}$.
			Fix a coordinate $t$ such that
			$\G_m=\Spec(k[t^{\pm 1}])\subset \Spec
			k[t]=\P^1_k\setminus \{\infty\}$. Then $E$ is said to be
			\emph{regular singular at $0$} if there exists a free
			$k[t]$-submodule $\mathcal{E}\subset E$ such that
			$\rank \mathcal{E}=\rank E$ and such that
			$\mathcal{E}$ is stable under the action of the
			operators $\delta^{(n)}_{t}$, $n\geq 0$. 

			This is easily seen to be equivalent to saying that
			$E\otimes_{\mathcal{O}_{\G_m}}k\llparen t\rrparen$ is
			regular singular in the sense of \ref{defn:oalpha:a},
			where we identify $k\llparen t\rrparen\cong
			\Frac\widehat{\mathcal{O}_{\G_m,0}}$.

			The notion of \emph{regular singularity at $\infty\in
			\P^1_k$} is defined analogously.
			
			We say that $E$ is \emph{regular singular}, if it is regular
			singular both at $0$ and $\infty$.
		\item If $R=k[t^{\pm 1}]$ or $k\llparen t \rrparen$, and $\alpha\in \Z_p$,
			then define $\mathcal{O}_R(\alpha)$ to be the stratification on the
			free $R$-module of rank $1$ given by
			$\delta_t^{(n)}(1)=\binom{\alpha}{n}$. Here $\delta^{(n)}_t$ is
			the differential operator acting via
			\[\delta^{(n)}_t(t^r)=\binom{r}{n}t^r.\]
	\end{enumerate}
\end{definition}
The following proposition is essentially contained in \cite[4.2]{Matzat}.
\begin{proposition}\label{prop:rank1}
	As before let $k$ be an algebraically closed field, $R=k[t^{\pm 1}]$
	or $k\llparen t\rrparen$.
	\begin{enumerate}
		\item\label{item:triviality} If $\alpha\in \Z_p$, then $\mathcal{O}_R(\alpha)$ is
			isomorphic to the trivial stratified bundle if and only
			if $\alpha\in \Z$.
		\item\label{item:homs} For $\alpha,\beta\in \Z_p$ we have
			\begin{align*}
				\hom_{\Strat(R)}(\mathcal{O}_R(\alpha),\mathcal{O}_R(\beta))&=\{0\}\cup\Isom_{\Strat(R)}(\mathcal{O}_R(\alpha),\mathcal{O}_R(\beta))=\\&
				=\begin{cases}k&\text{
				if }\alpha-\beta\in \Z\\0
					&\text{ else.}\end{cases} \end{align*}
				\item $\mathcal{O}_{k\llparen
					t\rrparen}(\alpha)$ is
					regular singular and $\mathcal{O}_{k[t^{\pm 1}]}(\alpha)$  is regular
			singular at $0$ and $\infty$.
		\item\label{item:representativeOfOalpha} The pointed set of isomorphism classes of stratified bundles
			of rank $1$ is an abelian group with
			multiplication given by the tensor product. It is
			isomorphic to 
			the abelian group
			\[\Rlim_n \G_m(R^{p^n})\cong \Z_p/\Z.\]

			The class of $\mathcal{O}_R(\alpha)$ is represented by
			$(t^{\alpha_np^n})_n\in \prod_n\G_m(R^{p^n})$, if
			$\alpha=\sum_{n\geq 0}\alpha_np^n$, $\alpha_n\in
			[0,p)$. 
				
			In particular, every rank $1$ stratified
			bundle on $R$ is isomorphic to
			$\mathcal{O}_R(\alpha)$ for some $\alpha$.
		\item\label{item:picstrat} The inclusion $k[t^{\pm 1}]\hookrightarrow k\llparen
			t\rrparen$ induces a commutative diagram
			\[\begin{tikzcd}
					\Rlim_n \G_m(k[t^{\pm
					1}]^{p^n})\dar[-, double equal sign distance ]\rar&\Rlim_n
					\G_m(k\llparen
					t\rrparen^{p^n})\dar[-,double equal sign distance ]\\
					\Z_p/\Z\rar{\id}&\Z_p/\Z.
				\end{tikzcd}
				\]
		 The
			inclusion $k[t^{\pm 1}]\hookrightarrow k\llparen
			t^{-1}\rrparen$ induces $-\id$ on $\Z_p/\Z$.
		\item\label{item:rank1restriction} The inclusion $k[t^{\pm
			1}]\hookrightarrow k\llparen t
			\rrparen$, induces an equivalence on the full
			subcategories of rank $1$ stratified bundles
			\[\Strat^{\rank 1}(\G_m)\xrightarrow{\cong} \Strat^{\rank
			1}(k\llparen t\rrparen).\]
	\end{enumerate}
\end{proposition}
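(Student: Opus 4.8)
The plan is to deduce the equivalence directly from the earlier parts of the proposition, treating fully faithfulness and essential surjectivity separately. Essential surjectivity is immediate from part \ref{item:representativeOfOalpha}: every rank $1$ stratified bundle on $k\llparen t\rrparen$ is isomorphic to some $\mathcal{O}_{k\llparen t\rrparen}(\alpha)$ with $\alpha\in\Z_p$, and $\mathcal{O}_{k\llparen t\rrparen}(\alpha)$ is the restriction of $\mathcal{O}_{k[t^{\pm1}]}(\alpha)$ along $k[t^{\pm1}]\hookrightarrow k\llparen t\rrparen$ (this compatibility is exactly what part \ref{item:picstrat} records, the restriction inducing the identity on $\Z_p/\Z$). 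Alternatively one phrases this as: the restriction functor induces a bijection on isomorphism classes of objects, since on both sides these classes are identified with $\Z_p/\Z$ compatibly, by \ref{item:representativeOfOalpha} and \ref{item:picstrat}.

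For full faithfulness, the point is that the restriction map on Hom-sets is
\[
\Hom_{\Strat(\G_m)}(\mathcal{O}_{k[t^{\pm1}]}(\alpha),\mathcal{O}_{k[t^{\pm1}]}(\beta))
\longrightarrow
\Hom_{\Strat(k\llparen t\rrparen)}(\mathcal{O}_{k\llparen t\rrparen}(\alpha),\mathcal{O}_{k\llparen t\rrparen}(\beta)),
\]
and by part \ref{item:homs} both sides are $k$ if $\alpha-\beta\in\Z$ and $0$ otherwise. So it suffices to check that when $\alpha-\beta\in\Z$ the restriction map is nonzero, hence an isomorphism of one-dimensional $k$-vector spaces; and this is clear because the restriction of a nonzero morphism of stratified bundles over $k[t^{\pm1}]$ is nonzero (the underlying map of $k[t^{\pm1}]$-modules is injective, being multiplication by a nonzero element of $k[t^{\pm1}]$ up to the identification of both line bundles with the free module of rank $1$, and $k[t^{\pm1}]\hookrightarrow k\llparen t\rrparen$ is faithfully flat, so tensoring preserves injectivity and in particular nonzeroness). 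Since every rank $1$ stratified bundle is of the form $\mathcal{O}_R(\alpha)$, checking full faithfulness on these representatives suffices.

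I do not expect a genuine obstacle here: the proposition is essentially a bookkeeping consequence of \ref{item:homs}, \ref{item:representativeOfOalpha} and \ref{item:picstrat}, all of which have already been established. The only mild subtlety is making sure the identifications of $\Hom$-sets and of isomorphism-class sets on the two sides are compatible with restriction — but this is built into the fact that the classification via $\mathcal{O}_R(\alpha)$ is natural in $R$, which is exactly the content of \ref{item:picstrat}. One should state explicitly that restriction sends $\mathcal{O}_{k[t^{\pm1}]}(\alpha)$ to $\mathcal{O}_{k\llparen t\rrparen}(\alpha)$ (immediate from Definition \ref{defn:oalpha}, since the formula $\delta_t^{(n)}(1)=\binom{\alpha}{n}$ does not change under base extension), and the rest follows.
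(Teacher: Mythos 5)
Your argument is correct and takes essentially the same route as the paper, whose proof of this item simply says it follows from parts \ref{item:homs} and \ref{item:picstrat}; you have filled in exactly the intended bookkeeping (bijectivity on isomorphism classes via $\Z_p/\Z$, plus injectivity of restriction on the at-most-one-dimensional Hom-spaces). One cosmetic correction: $k[t^{\pm 1}]\hookrightarrow k\llparen t\rrparen$ is flat but not \emph{faithfully} flat (the map on spectra only hits the generic point); all you actually need is that the ring map is injective, so a nonzero Laurent polynomial remains nonzero in $k\llparen t\rrparen$.
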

\begin{proof}
	\begin{enumerate}[label={\alph*)}]
		\item Let $\alpha\in \Z_p$ and consider
			$\mathcal{O}_R(\alpha)$. If $\alpha\in \Z$, then it is not difficult to check that
			$\delta^{(n)}_t(t^{-\alpha}\cdot 1)=0$ for every $n\geq
			1$. It follows that $\mathcal{O}_R(\alpha)$ contains a
			trivial subobject, and hence is trivial. Conversely, if
			$\mathcal{O}_R(\alpha)\cong \mathbf{1}_R$, then
			$\mathcal{O}_R(\alpha)$ contains a horizontal element,
			i.e.~there exists $f\in R$, such that
			\[0=\delta^{(n)}_t(f\cdot 1)=\sum_{\begin{subarray}{c}a+b=n\\a,b\geq
					0\end{subarray}}\delta^{(a)}_t(f)\binom{\alpha}{b}\]
			If $m$ denotes the pole
			order of $f$, then it follows that for every $n\geq 0$
			\[\binom{\alpha-m}{n}=\sum_{\begin{subarray}{c}a+b=n\\a,b\geq
					0\end{subarray}}\binom{-m}{a}\binom{\alpha}{b}=0\]
					which means that $\alpha=m$.
		\item Clearly, any nonzero morphism between rank $1$ stratified
			bundles must be an isomorphism, as every such morphism
			is locally split. Since
		\[\Hom_{\Strat(R)}(\mathcal{O}_R(\alpha),\mathcal{O}_R(\beta))=
			\Hom_{\Strat(R)}(\mathcal{O}_R,\mathcal{O}_R(\beta-\alpha))\]
			the claim follows from part \ref{item:triviality}.
		\item This is clear by definition: If $1\in
			\mathcal{O}_{k[t^{\pm 1}]}(\alpha)$ is a basis element such that
			$\delta_t^{(n)}(1)=\binom{\alpha}{n}$, then $1\cdot
			k\llbracket t \rrbracket$ is a $\delta_t^{(n)}$-stable
			lattice at $0$, and $1\cdot k\llbracket
			t^{-1}\rrbracket$ is a $\delta_{t^{-1}}^{(n)}$-stable lattice at $\infty$.  More generally, see
			\cite[Lemma 3.12]{Gieseker/FlatBundles}.
		\item By Proposition \ref{prop:stratAndRlim}, the pointed set
			of stratified bundles of rank $1$ can be identified
			with $\Rlim_n \G_m(R^{p^n})$. 			
			Since $\G_m$ is abelian, this pointed set carries the
			 structure of an abelian
			group.
			
			For $\alpha:=\sum_{n\geq 0}\alpha_n p^n$,
			$\alpha_n\in [0,p)\cap \Z$,
			the class of the rank $1$ stratified bundle $L:=\mathcal{O}_R(\alpha)$ in $\Rlim_n
			\G_m(R^{p^n})$ is the class $[(t^{\alpha_np^n})]$.
			Indeed, if we identify $L$ with a sequence
			$(L^{(n)},\sigma_n)$ as in the proof of Proposition
			\ref{prop:stratAndRlim}, then $L^{(n)}$ is given by 
			\[L^{(n)}=\{x\in L | \delta_t^{(p^m)}(x)=0, 0\leq m<n\},\]
			so if $e$ is a basis of $L^{(0)}$, then
			$t^{-\alpha_0-\alpha_1p-\ldots-\alpha_{n}p^{n}}e$
			is a basis for $L_{n+1}$, and the isomorphism
			$\sigma_n:R^{p^{n}}\otimes_{R^{p^{n+1}}}L^{(n+1)}\xrightarrow{\cong}
			L^{(n)}$ is
			multiplication by $t^{\alpha_np^{n}}$.

			On the other hand, every $f\in (R^{p^n})^\times$ can
			be uniquely written as
			as 
			\[t^{a_np^n}\lambda_n u_n,\] 
			where $a_n\in \Z$, $\lambda_n\in k^\times$ and
			$u_n\in\left(1+t^{p^n}k\llbracket t^{p^n}\rrbracket\right)$
			if $R=k\llparen t\rrparen$, or $u_n=1$ if
			$R=k[t^{\pm 1}]$. This induces a surjective homomorphism
			$\G_m(R^{p^n})/k^\times\rightarrow p^n\Z$, $f\mapsto p^na_n$,
			which is injective if $R=k[t^{\pm 1}]$, and has kernel $1+tk\llbracket t\rrbracket$ if
			$R=k\llparen t\rrparen$.
			We get a morphism of short exact sequences of
			projective systems
			\[
				\begin{tikzcd}
					1\rar&\{\G_m(R^{p^n})/k^\times\}_n\dar[two
					heads]\rar&\G_m(R)/k^\times\dar[two
					heads]\rar&\left\{\G_m(R)/\G_m(R^{p^n})\right\}_{n}\dar[two
					heads]\rar&1\\
					0\rar&\{p^n\Z\}_{n}\rar&\Z\rar&\{\Z/p^n\}_{n}\rar&0
				\end{tikzcd}
				\]
			where the middle terms are constant projective
			systems. Accordingly, we get a morphism of short exact
			sequences
			\[\begin{tikzcd}
					1\rar&\G_m(R)/k^\times\rar\dar[two
					heads]{f}&\varprojlim_n
					\G_m(R)/\G_m(R^{p^n})\rar\dar{\hat{f}}&\Rlim_n
					\G_m(R^{p^n})/k^\times\rar\dar &1\\
					0\rar&\Z \rar& \Z_p\rar& \Z_p/\Z\rar &0
				\end{tikzcd}
				\]
			It is not difficult to see that
			$\ker(f)=\ker(\hat{f})$: If $R=k[t^{\pm 1}]$, this is
			clear; if $R=k\llparen t\rrparen$, then the
			multiplicative group $\ker(f)=1+tk\llbracket
			t\rrbracket$ is already  $p$-adically complete. Moreover, the
			projective system $(1+tk\llbracket
			t\rrbracket)/(1+t^{p^n}k\llbracket t^{p^n}\rrbracket)$
			has surjective transition morphisms, so $\hat{f}$ is
			surjective. We conclude that the induced map \[\Rlim_n
			\G_m(R^{p^n})\cong \Rlim_n
			\G_m(R^{p^n})/k^\times \rightarrow \Z_p/\Z\] is an isomorphism of
			abelian groups. In particular, every stratified line
			bundle is isomorphic to $\mathcal{O}_R(\alpha)$ for
			some $\alpha\in \Z_p$.
			
			Since
			$\mathcal{O}_R(\alpha)\otimes
			\mathcal{O}_R(\beta)\cong
			\mathcal{O}_R(\alpha+\beta)$, we see that the group
			structure of $\Rlim_n \G_m(R^{p^n})$ agrees with the
			group structure defined by the tensor product on the set of isomorphism classes of
			stratified line bundles.
		\item This was already contained in the proof of
			\ref{item:representativeOfOalpha}.
		\item This follows from \ref{item:picstrat} and
			\ref{item:homs}.
	\end{enumerate}
\end{proof}

\section{Special stratified bundles}\label{sec:specialStrat}
As before, $k$ is an algebraically closed field of characteristic $p>0$, and $\G_m:=\P^1_k\setminus\{0,\infty\}$.
\begin{definition}\label{defn:triangle}
	We denote by  $\Strat^{\triangle,
	\rsi}(\G_m)$ (resp.~$\Strat^{\triangle,\rsn}(\G_m)$) 
	the full subcategory of $\Strat(\G_m)$ with objects the
	stratified bundles $E$ which are regular singular at $\infty$
	(resp.~$0$), and which admit a filtration $(E_i)_{i\geq 0}$,
	such that $E_{i+1}/E_i$ has rank $1$. Note that $E_{i+1}/E_i\cong
	\mathcal{O}_{\G_m}(\alpha)$ for some $\alpha\in \Z_p$ 
	according to Proposition \ref{prop:rank1}. In particular,
	$E_{i+1}/E_i$ is regular singular. For the notion of regular
	singularity, see Definition \ref{defn:oalpha}, or, in a more general
	context, \cite[\S3.]{Gieseker/FlatBundles} and
	\cite{Kindler/FiniteBundles}.

	Similarly, write $\Strat^{\unip, \rsi}(\G_m)$
	(resp.~$\Strat^{\unip, \rsn}(\G_m))$ for the full
	subcategory of $\Strat(\G_m)$ with objects unipotent stratified
	bundles which are regular singular at $\infty$ (resp.~$0$).  Here a
	stratified bundle $E$ is called unipotent if it admits a filtration
	$(E_i)_{i\geq 0}$, such that $E_{i+1}/E_i\cong \mathcal{O}_{\G_m}$ as
	stratified bundles.
\end{definition}
The category $\Strat^{\triangle,\rsn}(\G_m)$ is a direct generalization of the category of
\emph{special coverings} of \cite[1.3]{Katz/LocalToGlobal}: Let $f:Y\rightarrow \G_{m}$ be a
finite \'etale covering, and for simplicity assume that $f$ is Galois
with Galois group $G$. The covering $f$ is called \emph{special},
if it is tame at $0$, and if the finite group $G$ contains a
unique (hence normal) $p$-Sylow subgroup $P$. The quotient $G/P$ corresponds
to a tame Galois covering of $\G_m$ and hence is cyclic of order prime to $p$. If $f$ is not Galois, then by definition
$f$ is special if it is the disjoint union of connected coverings, whose
Galois closures are special. The push-forward $f_*\mathcal{O}_Y$ is 
naturally a stratified bundle, and if $f$ is connected, its monodromy group is the finite
constant $k$-group scheme associated with the Galois group of its Galois
closure (\cite[Ch.~VI, 1.2.4.1]{Saavedra}).  
\begin{proposition} \label{prop:special}In the notations from the previous
	paragraph, the covering $f$ is special if and only if
	$f_*\mathcal{O}_Y\in \Strat^{\spc,\rsn}(\G_m)$.
\end{proposition}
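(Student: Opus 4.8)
The plan is to pass to the Tannakian description of $f_*\mathcal{O}_Y$ and to translate each of the two conditions defining $\Strat^{\spc,\rsn}(\G_m)$ into group theory of $G$. Following the simplification in the statement, I treat the connected Galois case, the general case reducing to it via Galois closures exactly along the pattern of the definition of a special covering. After fixing a fiber functor, $E:=f_*\mathcal{O}_Y$ is a finite stratified bundle whose monodromy group is the constant finite group scheme attached to $G$ (\cite[Ch.~VI, 1.2.4.1]{Saavedra}), and under the resulting tensor equivalence $E$ corresponds to the left regular representation $k[G]$. Since this equivalence is exact and faithful, sub- and quotient stratified bundles of $E$ correspond bijectively to $G$-sub- and $G$-quotient modules of $k[G]$; in particular a filtration of $E$ with rank-$1$ subquotients corresponds to a composition series of $k[G]$ with one-dimensional graded pieces. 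I will match regular singularity at $0$ with tameness at $0$, and the existence of such a filtration with the existence of a unique $p$-Sylow subgroup; together these recover the definition of a special covering.

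For the first match, I would invoke that a finite stratified bundle is regular singular at $0$ if and only if the associated covering is tamely ramified at $0$ -- the finite-monodromy instance of the comparison between regular singularity and tameness (see \cite[\S3]{Gieseker/FlatBundles} and \cite{Kindler/FiniteBundles}). Thus $E$ is regular singular at $0$ precisely when $f$ is tame at $0$, which disposes of the $\rsn$-condition and isolates the genuine work in the triangulability condition.

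For the second match I would use two representation-theoretic facts over the algebraically closed field $k$ of characteristic $p$: a $G$-module admits a full flag of submodules if and only if all its simple constituents are one-dimensional, and every simple $k[G]$-module is one-dimensional exactly when $G/O_p(G)$ is abelian (the normal $p$-subgroup $O_p(G)$ acts trivially on every simple module, so the simples are those of the abelian-by-nothing quotient). Applied to the regular representation, $E$ admits a rank-$1$ filtration if and only if $G/O_p(G)$ is abelian. It then remains to prove the purely group-theoretic equivalence
\[
G/O_p(G)\ \text{abelian}\quad\Longleftrightarrow\quad G\ \text{has a unique}\ p\text{-Sylow subgroup.}
\]
The implication $\Leftarrow$ is immediate: a unique $p$-Sylow $P$ equals $O_p(G)$, and $G/P$, being the Galois group of a tame covering of $\G_m$, is cyclic, hence abelian. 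For $\Rightarrow$, in the abelian group $G/O_p(G)$ the $p$-Sylow is normal, hence contained in $O_p(G/O_p(G))=1$; thus $p\nmid |G/O_p(G)|$, so $O_p(G)$ already contains a full $p$-Sylow of $G$ and is therefore the unique one. It is exactly here that I must use that $f$ is an honest \'etale covering of $\G_m$: only the procyclicity $\pi_1^{\tame}(\G_m)\cong \widehat{\Z}^{(p')}$ forces the prime-to-$p$ quotient $G/P$ to be cyclic, as recorded in the definition of a special covering.

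Finally, to confirm that the constructed filtration realizes $E$ as an object of $\Strat^{\spc,\rsn}(\G_m)$ and not merely as a triangulable bundle, I would note that each one-dimensional graded piece is a character $G\to k^\times$, which necessarily factors through the cyclic prime-to-$p$ group $G/P$. By Proposition \ref{prop:rank1} the corresponding rank-$1$ object is $\mathcal{O}_{\G_m}(\alpha)$ with $\alpha$ of finite order in $\Z_p/\Z$, i.e.\ $\alpha\in\Z_{(p)}$; such a bundle is tame and in particular regular singular at $0$, so every graded piece is itself a special rank-$1$ stratified bundle. The main obstacle is the faithful dictionary between the lattice of sub-stratified-bundles of $E$ and that of $G$-submodules of $k[G]$, combined with the characteristic-$p$ input identifying triangulability of $k[G]$ with the condition that $G/O_p(G)$ be abelian; once these are secured, assembling the two matches yields that $f$ is special if and only if $f_*\mathcal{O}_Y\in\Strat^{\spc,\rsn}(\G_m)$.
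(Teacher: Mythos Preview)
Your proposal is correct and follows essentially the same line as the paper's proof: reduce to the Galois case, cite the equivalence of tameness at $0$ with regular singularity at $0$ (\cite[Thm.~6.1]{Kindler/FiniteBundles}), and then show that all irreducible $kG$-modules are one-dimensional if and only if $G$ has a normal $p$-Sylow subgroup with abelian (hence cyclic, using the tameness input) quotient. The only cosmetic differences are that you phrase the group-theoretic step via $O_p(G)$ rather than via an explicit embedding of $G$ into $B_r$, and your final paragraph verifying regular singularity of the rank-one graded pieces is superfluous since Definition~\ref{defn:triangle} already records that rank-one objects $\mathcal{O}_{\G_m}(\alpha)$ are automatically regular singular.
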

\begin{proof}
	Without loss of generality, we may assume $f$ to be Galois.
By \cite[Thm.~6.1]{Kindler/FiniteBundles}, $f$ is tame at $0$ if
	and only if the stratified bundle $f_*\mathcal{O}_Y$ is
	regular singular at $0$. The condition on $G$ is equivalent to
	the condition that every irreducible representation of $G$ on
	finite dimensional $k$-vector spaces has rank $1$. Indeed, if
	$f$ is special and $V\neq 0$ a representation of $G$, then
	$V^P\neq 0$, since $P$ is a $p$-group, and $V^P$ is a
	$G$-representation as $P$ is normal in $G$. If $V$ is
	irreducible, then $V^P=V$, so $V$ comes from a representation
	of the cyclic group $G/P$; the irreducible representations of $G/P$
	are all of rank $1$. 
	
	Conversely, if the irreducible
	representations of $G$ all have rank $1$, then $G$ can be
	realized as a closed subgroup of the group of upper triangular
	matrices of some rank. Its unique $p$-Sylow subgroup is the
	subgroup of unipotent matrices. Thus $f$ is special in the
	sense of \cite{Katz/LocalToGlobal} if and only if
	$f_*\mathcal{O}_Y\in \Strat^{\spc, 0}(\G_m)$. 
\end{proof}

\begin{remark}\label{rem:MainThm}
	\begin{enumerate}[label={\alph*)},ref={\alph*)}]
		\item The categories $\Strat^{\triangle,\rsi}(\G_m)$ and
			$\Strat^{\spc,0}(\G_m)$ are  strictly full
			subtannakian categories of $\Strat(\G_m)$. They are {not} stable under taking extensions, as
			the property of being regular singular at $\infty$
			(resp.~$0$) is not stable under taking extensions.
			For example, let $f:\G_m\rightarrow \G_m$ be the
			Artin-Schreier covering given by $u^p-u=t^{-1}$. The
			rank $2$ subbundle of $f_*\mathcal{O}_{\G_m}$ spanned
			by $1$ and $u$
			is a stratified subbundle which is regular singular at
			$\infty$ and not regular singular at $0$. 
			It is easily seen to be an
			extension of two trivial stratified bundles of rank $1$.	
		\item A stratified bundle $E$ on $\G_m$ which lies in both
			$\Strat^{\unip, \rsn}(\G_m)$ and
			$\Strat^{\unip,\rsi}(\G_m)$ is trivial. Indeed, such an
			$E$ is regular singular with exponents $0\in
		\Z_p/\Z$, so by \cite[Cor.~5.4]{Kindler/FiniteBundles} it extends to a stratified bundle on
		$\P^1_k$  and hence is trivial
		(\cite[Thm.~2.2]{Gieseker/FlatBundles}).
%
	\item\label{rem:MainThm:c} The condition that a special stratified bundle admits a
			filtration with graded pieces of rank $1$ is forced on
			us,
			as this property holds for all stratified bundles on
			$k\llparen t\rrparen$ by \cite[Prop.~6.3]{Matzat}.

			This is a striking difference to the situation in
			characteristic $0$.
		The category $\Strat(k\llparen t\rrparen)$ is at the same time simpler
			and more complicated than the category $\DM(\C\llparen
			t\rrparen)$ of
			differential modules on $\C\llparen t\rrparen$: On the one hand, it is
			not true that every irreducible differential module on
			$\C\llparen t\rrparen$ is of rank $1$, but on the
			other hand, if $L_1,L_2$ are differential modules of
			rank $1$, then
			$\Ext^1_{\DM(\C\llparen t\rrparen)}(L_1,L_2) = 0$
			unless $L_1\cong L_2$. The nontrivial irreducible
			objects of $\Strat(k\llparen t\rrparen)$ have rank
			$1$, but if $L_1,L_2$ are
			such objects, then the group $\Ext^1_{\Strat(k\llparen
			t\rrparen )}(L_1,L_2)$
			can be nonzero even if $L_1 \not\cong L_2$.
	\end{enumerate}
\end{remark}

\section{Proof of Theorem \ref{thm:equivalence}}\label{sec:proof}
 We
first show that the restriction functor
\begin{equation}\label{eq:restr}
	\restr:\Strat^{\triangle,\rsi}(\G_m)\rightarrow\Strat(k\llparen t
		\rrparen), E\mapsto
		E|_{k\llparen t\rrparen}
	\end{equation}
is fully faithful.

\begin{proposition}\label{prop:fullyfaithfulness}
	The functor
	\eqref{eq:restr}
	is fully faithful.
\end{proposition}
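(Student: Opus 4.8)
The plan is to prove full faithfulness by a dévissage along the rank-$1$ filtration, reducing the statement to the computation of $\Hom$- and $\Ext^1$-groups, and, ultimately, to Proposition~\ref{prop:rank1}. Since both categories are $k$-linear and the functor $\restr$ is exact (it is base change along a flat map $k[t^{\pm 1}]\hookrightarrow k\llparen t\rrparen$), it suffices to prove that for all $E,F\in \Strat^{\triangle,\rsi}(\G_m)$ the map
\[
\Hom_{\Strat(\G_m)}(E,F)\longrightarrow \Hom_{\Strat(k\llparen t\rrparen)}(E|_{k\llparen t\rrparen},F|_{k\llparen t\rrparen})
\]
is bijective. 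Writing $\mathcal{H}:=\HHom(E,F)$, which again lies in $\Strat^{\triangle,\rsi}(\G_m)$ (rank-$1$ filtrations are preserved by internal Hom, and regular singularity at $\infty$ is preserved), and using that $\Hom(E,F)=H^0_{\dR}$-type global sections $=\hom(\mathbf 1,\mathcal H)$, I would reduce to showing that $\restr$ induces an isomorphism on $\hom(\mathbf 1,-)$ and an injection on $\Ext^1(\mathbf 1,-)$ for every object of $\Strat^{\triangle,\rsi}(\G_m)$; equivalently, on $H^0$ an iso and on $H^1$ an injection. The key structural input is that such an object $\mathcal H$ sits in an exact sequence $0\to \mathcal H'\to \mathcal H\to L\to 0$ with $L=\mathcal{O}_{\G_m}(\alpha)$ of rank $1$ and $\mathcal H'\in \Strat^{\triangle,\rsi}(\G_m)$ of smaller rank, so by induction on rank and the long exact cohomology sequence everything comes down to the rank-$1$ case.

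**The rank-$1$ case is where the real content lies**, and it splits into two sub-claims. First, $H^0$: by Proposition~\ref{prop:rank1}\ref{item:homs}, $\hom_{\Strat(\G_m)}(\mathbf 1,\mathcal{O}_{\G_m}(\alpha))$ is $k$ if $\alpha\in\Z$ and $0$ otherwise, and by the same proposition applied over $k\llparen t\rrparen$ together with Proposition~\ref{prop:rank1}\ref{item:picstrat} (the restriction map is the identity on $\Z_p/\Z$), $\hom_{\Strat(k\llparen t\rrparen)}(\mathbf 1,\mathcal{O}_{k\llparen t\rrparen}(\alpha))$ is $k$ in exactly the same cases; the restriction map between one-dimensional $k$-spaces is visibly nonzero when $\alpha\in\Z$ (it sends the horizontal section $t^{-\alpha}$ to $t^{-\alpha}$), hence an isomorphism. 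Second, the injectivity on $\Ext^1$: I must show that if $0\to\mathcal{O}_{\G_m}(\alpha)\to E\to\mathbf 1\to 0$ becomes split after $\otimes_{k[t^{\pm 1}]}k\llparen t\rrparen$, then it was already split on $\G_m$ — equivalently $\Ext^1_{\Strat(\G_m)}(\mathbf 1,\mathcal{O}_{\G_m}(\alpha))\hookrightarrow \Ext^1_{\Strat(k\llparen t\rrparen)}(\mathbf 1,\mathcal{O}_{k\llparen t\rrparen}(\alpha))$. Here I would use that $E$ is itself regular singular at $\infty$ (it lies in $\Strat^{\triangle,\rsi}(\G_m)$, or at least this is the relevant sub-case — extensions of objects in the category by rank-$1$ pieces that remain in it) so its splitting is governed by a lattice at $\infty$; a horizontal splitting over $k\llparen t\rrparen$ produces, via the regular-singular lattice at $\infty$ together with algebraicity, a horizontal element on $\G_m$ itself. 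Concretely, a splitting over $k\llparen t\rrparen$ is a solution $f\in k\llparen t\rrparen$ of an inhomogeneous system $\delta_t^{(n)}(f) = $ (explicit element of $k[t^{\pm 1}]$ built from $\binom{\alpha}{n}$); I would argue that any such $f$ is forced to be a Laurent polynomial — using that the system already has an algebraic (rank-$1$, regular singular) shape at both ends — hence lies in $k[t^{\pm 1}]$ and splits the original sequence.

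**The main obstacle I anticipate is precisely the $\Ext^1$-injectivity**, i.e.~controlling the "extra" solutions of the differential system that might appear upon passing to the completion $k\llparen t\rrparen$. The clean way to package it is: a class in $\Ext^1_{\Strat(\G_m)}(\mathbf 1,\mathcal{O}_{\G_m}(\alpha))$ gives a rank-$2$ stratified bundle which is an iterated extension of rank-$1$ pieces and regular singular at $\infty$; if it splits over $k\llparen t\rrparen$ then both its "eigenvalues" in $\Z_p/\Z$ must be realized by a horizontal sub-line-bundle, and Proposition~\ref{prop:rank1}\ref{item:rank1restriction} — the equivalence $\Strat^{\rank 1}(\G_m)\xrightarrow{\cong}\Strat^{\rank 1}(k\llparen t\rrparen)$ — then lets me descend the splitting line back to $\G_m$, because a sub-line-bundle of $E$ over $k\llparen t\rrparen$ mapping isomorphically to $\mathbf 1$ corresponds to a horizontal section, and horizontal sections are computed by $H^0$, which we have already handled. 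In outline: (1) observe exactness and $k$-linearity, reduce to $\hom(\mathbf 1,-)$ iso and $\Ext^1(\mathbf 1,-)$ injective on $\Strat^{\triangle,\rsi}(\G_m)$; (2) dévissage along the rank-$1$ filtration and the long exact sequence, reducing to rank $1$; (3) rank-$1$ $H^0$-isomorphism from Proposition~\ref{prop:rank1}\ref{item:homs} and \ref{item:picstrat}; (4) rank-$1$ $\Ext^1$-injectivity via the regular-singular lattice at $\infty$ and Proposition~\ref{prop:rank1}\ref{item:rank1restriction}, showing a formal splitting is algebraic. I expect step (4) to require the most care, but no genuinely new tool beyond what Section~\ref{sec:stratAndRlim} provides.
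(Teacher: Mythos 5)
Your overall architecture (reduce to $\hom(\mathbf 1,\HHom(E,F))$, induct along the rank-$1$ filtration, isolate a rank-$1$ computation) is the same dévissage the paper performs, but the proposal has a genuine gap in the step you yourself flag as the crux, and moreover one of your intermediate claims is false as stated. The claim that $\restr$ induces an \emph{injection} on $\Ext^1(\mathbf 1,-)$ for every object of $\Strat^{\triangle,\rsi}(\G_m)$ cannot hold if $\Ext^1$ is computed in the ambient abelian category $\Strat(\G_m)$, which is where the connecting maps of your long exact sequences live. Already for the trivial object: $\Ext^1_{\Strat(\G_m)}(\mathbf 1,\mathbf 1)\cong\Rlim_n\G_a(k[t^{\pm 1}]^{p^n})$, and the class of $(t^{p^n})_n$ is nontrivial over $k[t^{\pm 1}]$ but becomes trivial over $k\llparen t\rrparen$ (it is killed only by restriction to $k\llparen t^{-1}\rrparen$; compare Remark \ref{rem:MainThm} and Lemma \ref{lemma:triviality}). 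The point is precisely that $\Strat^{\triangle,\rsi}(\G_m)$ is \emph{not} closed under extensions, so the correct injectivity statement only holds on the subset of extension classes whose total space is again regular singular at $\infty$; your dévissage never tracks this condition, and without it the four-lemma argument collapses. (It is repairable --- the pullback extension $\mathcal H\times_L\mathbf 1$ is a subobject of $\mathcal H\oplus\mathbf 1$, hence regular singular at $\infty$ --- but this bookkeeping is exactly where the hypothesis defining $\Strat^{\triangle,\rsi}$ must enter, and you do not make it enter.)

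Second, the rank-$1$ $\Ext^1$-injectivity, which carries all the content, is only gestured at, and the tool you propose for it does not suffice. Proposition \ref{prop:rank1}, part \ref{item:rank1restriction}, is an equivalence between the categories of \emph{rank-$1$} objects; it controls morphisms between rank-$1$ objects but says nothing about descending a rank-$1$ subobject of $E|_{k\llparen t\rrparen}$ to a subobject of $E$ --- that descent \emph{is} the full faithfulness being proved, so invoking it here is circular. What is actually needed is your parenthetical ``concretely'' sentence carried out in full: a formal horizontal section $\phi(1)=\sum g_ie_i$ with $g_1,\dots,g_{r-1}\in k[t^{\pm 1}]$ known, and one must show $g_r\in k[t^{\pm 1}]$. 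The paper does this by observing that $\sum_{a+b=p^n}\delta^{(a)}_t(g_r)\binom{\alpha}{b}$ lies in $k[t^{\pm 1}]$ with $\deg_t$ bounded above uniformly in $n$ by regular singularity at $\infty$, and then that a coefficient $g_{ir}\neq 0$ with $i$ above that bound forces $\binom{\alpha+i}{p^n}=0$ for all $n$, i.e.\ $\alpha=-i$, which can happen for at most one $i$; hence $g_r$ is a Laurent polynomial. Your appeal to ``the system having an algebraic shape at both ends'' does not substitute for this degree-and-binomial argument, and without it the proposal does not close.
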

\begin{proof}
	$\restr$ is faithful by \cite[Prop.~1.19]{DeligneMilne}.

	To see that $\restr$ is full, it suffices to show that for every
	stratified bundle $E\in
	\Strat^{\triangle,\rsi}(\G_m)$, every morphism
	$\mathbf{1}_{k\llparen t\rrparen}\rightarrow E|_{k\llparen t\rrparen}$ lifts to a morphism
	$\mathbf{1}_{\G_m}\rightarrow E$. We  induct on the rank of
	$E$. If $E$ has rank $1$, then we can invoke Proposition
	\ref{prop:rank1}, \ref{item:rank1restriction}.
	Assume that we have proved the statement for all bundles of rank $<r=\rank
	E$, and
	fix a morphism $\phi:\mathbf{1}_{k\llparen t\rrparen}\rightarrow
	E|_{k\llparen t\rrparen}$. By assumption, $E$ has
	a subobject $E'$ of rank $1$. If $\phi$ factors through
	$E'|_{k\llparen t\rrparen}$,
	then we are done by induction. If not, then the composition
	$\mathbf{1}_{k\llparen t\rrparen}\xrightarrow{\phi} E|_{k\llparen
	t\rrparen}\rightarrow (E/E')|_{k\llparen t\rrparen}$ is nonzero, and by
	induction it can be lifted to a nonzero morphism $\mathbf{1}_{\G_m}\rightarrow
	E/E'$. Let $e_1,\ldots, e_r$ be a basis of $E$ such that
	$\delta^{(p^n)}_t(e_i)\subset \left<e_i,\ldots, e_r\right>$
	for every $i,n\geq 0$,
	and such that $E'=\left<e_r\right>=\mathcal{O}_{\G_m}(\alpha)$. Then
	\[\phi(1)=g_1e_1+\ldots+g_{r}e_r\in E|_{{k\llparen
	t\rrparen}}\] is horizontal with
	$g_1,\ldots, g_r\in {k\llparen t\rrparen}$, and we have
	shown that $g_1,\ldots, g_{r-1}\in k[t^{\pm 1}]$. It remains to prove
	that $g_r\in k[t^{\pm 1}]$. For $n\geq 0$ we compute
	\begin{equation}\label{eq:fullness}
		0=\delta^{(p^n)}_t\left(\sum_{i=1}^rg_ie_i\right)=\sum_{i=1}^{r-1}\delta^{(p^n)}_t(g_ie_i)+\sum_{\begin{subarray}{c}a+b=p^n\\a,b\geq
		0\end{subarray}}\delta_t^{(a)}(g_r)\binom{\alpha}{b}e_r.
	\end{equation}
%
       %
	From this we see that  for every $n\geq 0$
	\[\sum_{\begin{subarray}{c}a+b=p^n\\a,b\geq
			0\end{subarray}}\delta_t^{(a)}(g_r)\binom{\alpha}{b}\in
			k[t^{\pm 1}],\]
			because $g_i\in k[t^{\pm 1}]$ for $i<r$, and 
				$\delta^{(a)}_t(e_i)\in E\subset E|_{k\llparen
				t\rrparen}$.
	This means we can define
	\[d(n):=\deg_t\left( \sum_{\begin{subarray}{c}a+b=p^n\\a,b\geq
					0\end{subarray}}\delta_t^{(a)}(g_r)\binom{\alpha}{b}
					\right).\]
	Since $E$ is regular singular at $\infty$, i.e.~with respect to
	$k[t^{\pm 1}]\subset k\llparen t^{-1}\rrparen$, $d(n)$ is bounded from
	above, say by $M\in \N$. Writing $g_r=\sum_{i=-N}^{\infty}g_{ir}t^i$,
	$g_{ir}\in k$, for all $n\geq 0$ we get
	\begin{equation}\label{eq:degunbounded}\sum_{\begin{subarray}{c}a+b=p^n\\a,b\geq
		0\end{subarray}}\delta_t^{(a)}(g_r)\binom{\alpha}{b}=\sum_{i=-N}^{M}g_{ir}\binom{\alpha+i}{p^n}t^{i}.\end{equation}
		In particular, if $i>M$ and $g_{ir}\neq 0$, then
		$\binom{\alpha+i}{p^n}=0$ for all $n\geq 0$, so $\alpha=-i$.
		This shows that $g_{ir}=0$ for $i\gg 0$, so  $g_{r}$ is a Laurent polynomial.
%
%
%
\end{proof}

To prove that the restriction functor is essentially surjective, we first
describe the pointed set of isomorphism classes of composition series of
length $r$ in
$\Strat^{\spc,\rsi}(\G_m)$ using $\Rlim$.
\begin{proposition}\label{prop:reductionToUnipotent}
			The inclusion $k[t^{\pm 1}]\hookrightarrow
				k\llparen t^{-1}\rrparen$ 
				induces a commutative diagram
				\[\begin{tikzcd}
						\Rlim_n B_r(k\llparen t^{-1}\rrparen^{p^n})\rar&\Rlim_n \G_m^r(k\llparen t^{-1}\rrparen^{p^n})\lar[bend right=20, swap]{s_{k\llparen t^{-1}\rrparen}}\rar{\cong}&(\Z_p/\Z)^r\\
						\Rlim_n B_r( k[t^{\pm 1}]^{p^n})\uar{\varphi}\rar&\Rlim_n \G_m^r(k[t^{\pm 1}]^{p^n})\lar[bend right=20, swap]{s_{k[t^{\pm 1}]}}\rar{\cong}\uar&(\Z_p/\Z)^r\uar{\cong} .
					\end{tikzcd}
						\]
				The set of isomorphism classes of
				composition series of length $r\geq 1$
				in $\Strat^{\spc,\rsi}(\G_m)$ is given by
				\[\varphi^{-1}(\image(s_{k\llparen
					t^{-1}\rrparen})),\]
				and writing
				\[K_r:=\ker\left(\Rlim_n U_r(k[t^{\pm
				1}]^{p^n})\rightarrow \Rlim_n
						U_r(k\llparen
						t^{-1}\rrparen^{p^n})\right)\]
				the short exact sequence
					\[
						\begin{tikzcd}
							1\rar& \Rlim_n
							U_r(k[t^{\pm 1}]^{p^n})\rar& \Rlim_n
							B_r(k[t^{\pm 1}]^{p^n})\rar& \Rlim_n
							\G_m^{r}(k[t^{\pm 1}]^{p^n})\rar\lar[bend
							right=20,swap]{s_{k[t^{\pm
							1}]}}& 1
						\end{tikzcd}\]
				from Proposition \ref{prop:stratAndRlim}
				restricts to a short exact sequence of pointed
				sets
		        	\begin{equation}\label{Trkersequence}
		        		\begin{tikzcd}
		        			1\rar&	K_r\rar& \varphi^{-1}(\image s_{k\llparen t^{-1}\rrparen}) \rar&\Rlim_n \G_m^r(k[t^{\pm 1}]^{p^n})\rar\lar[bend right=20, swap]{s_{k[t^{\pm 1}]}} & 1
		        		\end{tikzcd}
		        	\end{equation}
				with section $s_{k[t^{\pm 1}]}$.

\end{proposition}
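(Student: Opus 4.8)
The plan is to get the whole statement from Propositions \ref{prop:stratAndRlim} and \ref{prop:rank1} plus one substantive input — the semisimplicity of regular singular stratified bundles on $k\llparen t^{-1}\rrparen$ — everything else being bookkeeping with $\Rlim$. \emph{The commutative diagram.} The two triangles with sections $s_{k[t^{\pm 1}]}$ and $s_{k\llparen t^{-1}\rrparen}$ are the data of Proposition \ref{prop:stratAndRlim}\,(c) and (d) for $R=k[t^{\pm 1}]$ and for $R=k\llparen t^{-1}\rrparen$; the vertical maps and the commutativity of the left-hand squares are the functoriality statement Proposition \ref{prop:stratAndRlim}\,(e) applied to $k[t^{\pm 1}]\hookrightarrow k\llparen t^{-1}\rrparen$; and the identifications $\Rlim_n\G_m^r(R^{p^n})\cong(\Z_p/\Z)^r$, together with the fact that the right-hand vertical map is $-\id$ on each factor and hence an isomorphism, are Proposition \ref{prop:rank1}, \ref{item:representativeOfOalpha} and \ref{item:picstrat}. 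By Proposition \ref{prop:stratAndRlim}\,(b), $\varphi$ then sends the isomorphism class of a composition series $0\subsetneq E_1\subsetneq\cdots\subsetneq E_r=E$ of stratified bundles on $\G_m$ with $\rank E_i/E_{i-1}=1$ to the isomorphism class of $0\subsetneq E_1|_{k\llparen t^{-1}\rrparen}\subsetneq\cdots\subsetneq E|_{k\llparen t^{-1}\rrparen}$.

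\emph{Identifying the composition series in $\Strat^{\spc,\rsi}(\G_m)$.} Let $x\in\Rlim_n B_r(k[t^{\pm 1}]^{p^n})$, represented by a composition series as above with total object $E$. Since the graded pieces have rank $1$, $E$ automatically admits a filtration with rank-$1$ subquotients, so $E\in\Strat^{\spc,\rsi}(\G_m)$ exactly when $E$ is regular singular at $\infty$, i.e.\ (Definition \ref{defn:oalpha}) when $F:=E|_{k\llparen t^{-1}\rrparen}$ is regular singular. On the other hand, Proposition \ref{prop:stratAndRlim}, \ref{stratAndRlim4}, gives $\varphi(x)\in\image(s_{k\llparen t^{-1}\rrparen})$ iff $F\cong\bigoplus_{i=1}^r E_i|_{k\llparen t^{-1}\rrparen}/E_{i-1}|_{k\llparen t^{-1}\rrparen}$. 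So it suffices to prove that $F$ is regular singular iff it is isomorphic to this direct sum. The ``if'' is immediate: each $E_i|_{k\llparen t^{-1}\rrparen}/E_{i-1}|_{k\llparen t^{-1}\rrparen}$ has rank $1$, hence is regular singular (Proposition \ref{prop:rank1}\,(c)), and a finite direct sum of regular singular bundles is regular singular (add up the $\delta_t^{(n)}$-stable lattices). For ``only if'' I use that every regular singular stratified bundle on $k\llparen t^{-1}\rrparen$ is a direct sum of rank-$1$ bundles; since the simple objects of $\Strat(k\llparen t^{-1}\rrparen)$ are precisely the rank-$1$ bundles (Remark \ref{rem:MainThm}\,(c)) and rank-$1$ bundles are classified by their class in $\Z_p/\Z$ (Proposition \ref{prop:rank1}, \ref{item:representativeOfOalpha}), the Jordan--H\"older theorem identifies the summands of $F$ with the $E_i|_{k\llparen t^{-1}\rrparen}/E_{i-1}|_{k\llparen t^{-1}\rrparen}$ up to order. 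Combining the two equivalences shows that the length-$r$ composition series lying in $\Strat^{\spc,\rsi}(\G_m)$ are exactly those in $\varphi^{-1}(\image s_{k\llparen t^{-1}\rrparen})$. This semisimplicity input is the only non-formal step and is the main obstacle; I expect it to come from the classification of regular singular stratified bundles on $k\llparen t^{-1}\rrparen$ (for instance by choosing a $\delta_t^{(n)}$-stable $k\llbracket t^{-1}\rrbracket$-lattice, reducing modulo $t^{-1}$, and simultaneously diagonalizing the resulting commuting operators over the algebraically closed field $k$).

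\emph{The restricted sequence \eqref{Trkersequence}.} It is obtained from \eqref{Trsequence} for $R=k[t^{\pm 1}]$ by restricting the middle term to $\varphi^{-1}(\image s_{k\llparen t^{-1}\rrparen})$. From the commutative diagram, $\varphi\circ s_{k[t^{\pm 1}]}=s_{k\llparen t^{-1}\rrparen}\circ(\text{restriction on }\Rlim_n\G_m^r)$, so $s_{k[t^{\pm 1}]}$ already takes values in $\varphi^{-1}(\image s_{k\llparen t^{-1}\rrparen})$, and the projection $\varphi^{-1}(\image s_{k\llparen t^{-1}\rrparen})\to\Rlim_n\G_m^r(k[t^{\pm 1}]^{p^n})$ remains surjective with section $s_{k[t^{\pm 1}]}$. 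Its fibre over the base point is $\Rlim_n U_r(k[t^{\pm 1}]^{p^n})\cap\varphi^{-1}(\image s_{k\llparen t^{-1}\rrparen})$; since $\varphi$ carries $\Rlim_n U_r(k[t^{\pm 1}]^{p^n})$ into $\Rlim_n U_r(k\llparen t^{-1}\rrparen^{p^n})$ (functoriality of \eqref{Trsequence}) and $\Rlim_n U_r(k\llparen t^{-1}\rrparen^{p^n})\cap\image(s_{k\llparen t^{-1}\rrparen})=\{1\}$ (if $s_{k\llparen t^{-1}\rrparen}(c)$ lies in the kernel $\Rlim_n U_r(k\llparen t^{-1}\rrparen^{p^n})$ of the projection, then $c=1$), that fibre equals $K_r$. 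Finally $K_r\hookrightarrow\varphi^{-1}(\image s_{k\llparen t^{-1}\rrparen})$ is injective because $\Rlim_n U_r(k[t^{\pm 1}]^{p^n})\hookrightarrow\Rlim_n B_r(k[t^{\pm 1}]^{p^n})$ is, by \eqref{Trsequence}. This produces \eqref{Trkersequence} together with its section $s_{k[t^{\pm 1}]}$.
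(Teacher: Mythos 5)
Your proposal is correct and follows essentially the same route as the paper: the diagram and the restricted sequence are formal consequences of Propositions \ref{prop:stratAndRlim} and \ref{prop:rank1} (the paper likewise disposes of \eqref{Trkersequence} by noting $K_r=\Rlim_n U_r(k[t^{\pm 1}]^{p^n})\cap \varphi^{-1}(\image s_{k\llparen t^{-1}\rrparen})$), and the one substantive input you isolate --- that a regular singular stratified bundle on $k\llparen t^{-1}\rrparen$ decomposes as a direct sum of rank~$1$ objects --- is exactly what the paper invokes, by citation to \cite[Thm.~3.3]{Gieseker/FlatBundles} and \cite[Prop.~6.1]{Matzat}, rather than reproving. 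Your sketch of that input (stable lattice, reduce, simultaneously diagonalize commuting operators over $k$) is the right idea, but in the finished write-up you should simply cite those references as the paper does.
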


\begin{proof}
This follows almost entirely from Propostition
	\ref{prop:stratAndRlim}.  The description of the preimage $\phi^{-1}(\image(s_{k\llparen t^{-1}\rrparen}))$
	follows from the fact that a stratified bundle $E$ on $\G_m$ is
	regular singular at $\infty$ if and only if $E\otimes k\llparen
	t^{-1}\rrparen$ is a direct sum of rank $1$ objects, see
	\cite[Thm.~3.3]{Gieseker/FlatBundles} or \cite[Prop.~6.1]{Matzat},
	which by Proposition \ref{prop:stratAndRlim}, \ref{stratAndRlim4} means that its class
	maps to $\image s_{k\llparen t^{-1}\rrparen}$. For
	the sequence \eqref{Trkersequence}, note that $K_r= \Rlim_n U_r(k[t^{\pm 1}])\cap \varphi^{-1}(\image(s_{k\llparen t^{-1}\rrparen}))$.
\end{proof}

Theorem \ref{thm:equivalence} now follows from the following proposition.
\begin{proposition}\label{prop:twistedMain}With the notation from Proposition
	\ref{prop:reductionToUnipotent}, the inclusion $k[t^{\pm 1}]\hookrightarrow
	k\llparen t\rrparen$ induces a surjection of pointed
	sets
	\begin{equation}\label{phi}\phi:\phi^{-1}(\image s_{k\llparen t^{-1}\rrparen})\twoheadrightarrow
		\Rlim_n B_r(k\llparen t\rrparen^{p^n}).\end{equation}
		In particular, the restriction functor $\Strat^{\triangle,
		\rsi}(\G_m)\rightarrow \Strat(k\llparen t \rrparen)$ is
		essentially surjective.

		Note that \eqref{phi} is a priori injective, according to Proposition
		\ref{prop:fullyfaithfulness}.
\end{proposition}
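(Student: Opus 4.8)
The plan is to deduce the surjectivity of \eqref{phi} by a dévissage: first peel off the ``diagonal part'' using the split sequence \eqref{Trsequence} together with the rank-one computation of Proposition \ref{prop:rank1}, then reduce the remaining (twisted) unipotent part, layer by layer along the central series of $U_r$, to a single ``additive'' lifting statement which is settled exactly as in Proposition \ref{prop:fullyfaithfulness}. Throughout, ``special at $\infty$'' means regular singular at $\infty$ and filtered by rank-one subquotients, so that $\phi^{-1}(\image s_{k\llparen t^{-1}\rrparen})$ is, by Proposition \ref{prop:reductionToUnipotent}, the set of isomorphism classes of composition series of length $r$ in $\Strat^{\triangle,\rsi}(\G_m)$.

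\emph{Peeling off the diagonal.} Let $c\in\Rlim_n B_r(k\llparen t\rrparen^{p^n})$ have image $\bar c\in\Rlim_n\G_m^r(k\llparen t\rrparen^{p^n})$ under the projection of \eqref{Trsequence}. By Proposition \ref{prop:rank1}\ref{item:picstrat} the restriction $\Rlim_n\G_m^r(k[t^{\pm1}]^{p^n})\to\Rlim_n\G_m^r(k\llparen t\rrparen^{p^n})$ is an isomorphism, so $\bar c$ has a unique preimage $\bar d$; then $s_{k[t^{\pm1}]}(\bar d)$ is a \emph{split} composition series on $\G_m$, hence a direct sum of the regular singular line bundles $\mathcal O_{\G_m}(\alpha_i)$, so it lies in $\phi^{-1}(\image s_{k\llparen t^{-1}\rrparen})$ and maps under \eqref{phi} to $s_{k\llparen t\rrparen}(\bar c)$. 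Therefore it suffices to show that the whole fibre over $\bar c$ of the projection in \eqref{Trsequence} lies in the image of \eqref{phi}. Since \eqref{Trsequence} is split, this fibre is the image of the natural map $\Rlim_n\{{}^{\bar c}U_r\}_n\to\Rlim_n B_r(k\llparen t\rrparen^{p^n})$, where $\{{}^{\bar c}U_r\}_n$ is the projective system obtained from $\{U_r(k\llparen t\rrparen^{p^n})\}_n$ by twisting its transition maps by a cocycle for $\bar c$ via the conjugation action of $\G_m^r$ on $U_r$. Performing the same twist over $k[t^{\pm1}]$ and transporting the condition ``regular singular at $\infty$'' reduces the claim to a statement internal to the twisted unipotent groups — essentially, that the twisted analogues of the group $K_r$ of Proposition \ref{prop:reductionToUnipotent} surject onto the corresponding $\Rlim_n\{{}^{\bar c}U_r\}_n$.

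\emph{Dévissage along the central series.} Filter $U_r=U_r^{(0)}\supset U_r^{(1)}\supset\dots\supset U_r^{(r-1)}=\{1\}$ by the number of vanishing superdiagonals, so that each $U_r^{(j)}/U_r^{(j+1)}\cong\G_a^{\,r-1-j}$ is central in $U_r/U_r^{(j+1)}$; this persists after twisting. Applying $\Rlim_n(-)(R^{p^n})$ for $R=k[t^{\pm1}]$ and $R=k\llparen t\rrparen$, Lemma \ref{lemma:Rlim}\ref{item:Rlim3} turns each central layer into a morphism of \emph{strongly exact} short exact sequences of pointed sets, to which the weak snake Lemma \ref{lemma:snake}\ref{item:snake2} applies. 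Hence, inducting on $r$, surjectivity of \eqref{phi} follows once each additive layer restricts surjectively: for every $\gamma\in\Z_p$, the subset of $\Rlim_n\G_a(k[t^{\pm1}]^{p^n})$ — with transition maps twisted by $\mathcal O(\gamma)$ — consisting of classes regular singular at $\infty$ must surject onto $\Rlim_n\G_a(k\llparen t\rrparen^{p^n})$ with the same twist, a group which computes $\Ext^1_{\Strat(k\llparen t\rrparen)}(\mathbf 1,\mathcal O(\gamma))$.

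\emph{The additive layer, which I expect to be the main obstacle.} Here the position of $\infty$ enters, and the argument is exactly that of Proposition \ref{prop:fullyfaithfulness}. An extension of $\mathbf 1$ by $\mathcal O(\gamma)$ is recorded by a cocycle $(c_n)_n$, where $c_n$ is the off-diagonal entry of $\delta_t^{(p^n)}$ on a lift $f$ of the generator of $\mathbf 1$; replacing $f$ by $f+ge$ changes $(c_n)_n$ by the coboundary $\bigl(\sum_{a+b=p^n}\delta_t^{(a)}(g)\binom{\gamma}{b}\bigr)_n$, which for $g=t^i$ equals $\bigl(\binom{\gamma+i}{p^n}t^i\bigr)_n$. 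Given a class over $k\llparen t\rrparen$, I would first subtract coboundaries to remove the non-Laurent-polynomial parts, and then argue as in Proposition \ref{prop:fullyfaithfulness}: regular singularity at $\infty$ bounds from above the exponents $i$ appearing in the remaining entries, while a monomial $t^i$ with $i$ above that bound can survive only if $\binom{\gamma+i}{p^n}=0$ for all $n\ge 0$, forcing $\gamma=-i$, which can hold for at most one $i$; so the class is represented by a Laurent polynomial cocycle, which is then automatically regular singular at $\infty$. Unwinding the dévissage yields the surjection \eqref{phi}. For the final clause, every object of $\Strat(k\llparen t\rrparen)$ has a composition series with rank-one graded pieces by \cite[Prop.~6.3]{Matzat} (cf.\ Remark \ref{rem:MainThm}), so its isomorphism class lies in some $\Rlim_n B_r(k\llparen t\rrparen^{p^n})$; surjectivity of \eqref{phi} then shows that the restriction functor $\Strat^{\triangle,\rsi}(\G_m)\to\Strat(k\llparen t\rrparen)$ is essentially surjective.
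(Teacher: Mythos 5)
Your overall architecture coincides with the paper's: peel off the diagonal by twisting the projective systems by a representative of the class in $\Rlim_n\G_m^r(k[t^{\pm1}]^{p^n})$, run a d\'evissage along a central filtration of $U_r$ using Lemma \ref{lemma:Rlim}, \ref{item:Rlim3} and Lemma \ref{lemma:snake}, and reduce everything to one additive lifting statement. (Two technical points you pass over: to obtain \emph{strongly} exact sequences of $\Rlim$'s from the central layers one must check surjectivity of the corresponding twisted $\varprojlim$'s, which the paper does via explicit sections of the quotient maps; and to apply Lemma \ref{lemma:snake}, \ref{item:snake1} and get the sequence of kernels one also needs surjectivity of the additive layer onto $\Rlim^{\gamma}_n\G_a(k\llparen t^{-1}\rrparen^{p^n})$.)

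The genuine gap is in the additive layer, which you correctly identify as the main obstacle. What must be proved there is: every class in $\Rlim^{\gamma}_n\G_a(k\llparen t\rrparen^{p^n})$ lifts to a class in $\Rlim^{\gamma}_n\G_a(k[t^{\pm1}]^{p^n})$ that moreover dies in $\Rlim^{\gamma}_n\G_a(k\llparen t^{-1}\rrparen^{p^n})$. Your argument transplants the degree-bound reasoning of Proposition \ref{prop:fullyfaithfulness}, but that reasoning runs in the wrong direction: there one is \emph{given} a global object regular singular at $\infty$ and bounds the degrees of a local horizontal section; here nothing at $\infty$ is given, and one must \emph{construct} a representative that becomes trivial at $\infty$. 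In particular your concluding claim that a Laurent-polynomial cocycle ``is then automatically regular singular at $\infty$'' is false: for $\gamma=0$ the cocycle $(t^{p^n})_n$ has polynomial entries and is trivial over $k\llparen t\rrparen$, yet by Lemma \ref{lemma:triviality}, \ref{lemma:trivInfty} it is nontrivial over $k\llparen t^{-1}\rrparen$ --- this is exactly the Artin--Schreier phenomenon of Remark \ref{rem:MainThm}. The missing content is Lemma \ref{lemma:triviality}: normalize the representative so that $\supp(a_n)\subset p^n\Z\setminus(\alpha_np^n+p^{n+1}\Z)$, prove that the part $(a_n^{\geq 0})_n$ supported in non-negative degrees is a coboundary over $k\llparen t\rrparen$ (a convergence argument for the sums \eqref{eq:ysum}, using that $\alpha=0$ or $\alpha\notin\Z$), and then observe that the remaining part has entries in $t^{-1}k[t^{-1}]$ and is therefore simultaneously a Laurent-polynomial cocycle and trivial over $k\llparen t^{-1}\rrparen$. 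Without this support analysis the d\'evissage does not close.
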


An obvious approach would be to use \eqref{Trkersequence} together with some
variant of a
``5-lemma'' to reduce to the unipotent case: According to Proposition \ref{prop:stratAndRlim}, the inclusion $k[t^{\pm 1}]\subset k\llparen t\rrparen$
gives rise to a morphism of short exact sequences of pointed sets
\[\maxsizebox{\textwidth}{!}{
	\begin{tikzcd}
	1\rar &K_r\rar\dar& \varphi^{-1}(\image s_{k\llparen t^{-1}\rrparen})
	\dar\rar& \Rlim_n \G_m^r(k[t^{\pm 1}]^{p^n})\dar{\cong}\rar\lar[bend right=20, swap]{s_{k[t^{\pm 1}]}}& 1\\
	1\rar &\Rlim_n U_r(k\llparen t\rrparen^{p^n})\rar& \Rlim_n B_r(k\llparen t\rrparen^{p^n})\rar&\Rlim_n \G_m^r(k\llparen t \rrparen^{p^n})\rar\lar[bend right=20, swap]{s_{k\llparen t \rrparen}} & 1
	\end{tikzcd}
}
	\]
	compatible with the sections $s_{k[t^{\pm 1}]}$ and $s_{k\llparen
	t\rrparen}$.
	 The surjectivity of the left vertical arrow is the statement
	 corresponding to Proposition \ref{prop:twistedMain} for unipotent bundles.
	 The vertical arrow on the right is a bijection by Proposition \ref{prop:rank1}.
Now if we had some form of 5-lemma applicable to this situation, the proof of
Proposition \ref{prop:twistedMain} would be reduced to the unipotent case. Unfortunately, I do not know
of such a 5-lemma. The problem is that if $c\in \Rlim_n \G_m^r(k[t^{\pm 1}]^{p^n})\cong
(\Z_p/\Z)^{r}$, then a priori the fiber of $\varphi^{-1}(\image s_{k\llparen
t^{-1}\rrparen})$ over $c$ is different from the fiber over $1$, and
since we do not have a uniform structure, there does not seem to be a way to
relate these fibers.

Instead we have to take a more involved path. Fix $c\in
(\Z_p/\Z)^{r}\cong \Rlim_n \G_m^r(k[t^{\pm 1}]^{p^n})$ and let
$\gamma:=(\gamma_n)_n\in \prod_n \G_m^r(k[t^{\pm 1}]^{p^n})$ be a representative of
of $c$. We ``twist'' the projective systems by $\gamma$:
Let $R=k[t^{\pm 1}]$ or $k\llparen t\rrparen$ and let
$\{B_r(R^{p^n})\}^\gamma$ denote the projective system with groups
$B_r(R^{p^n})$ and transition morphisms
\[u_{n,\gamma}:B_r(R^{p^{n+1}})\rightarrow B_r(R^{p^n}),\;
	u_{n,\gamma}(A_{n+1})=s_R(\gamma_n^{-1})A_{n+1}s_R(\gamma_{n}),\] where
$s_R:\G^r_m(R)\rightarrow B_r(R)$ is the section of $B_r(R)\rightarrow
\G_m^r(R)$, 
attaching to an ordered $r$-tuple of units the corresponding diagonal matrix. Write
$\{U_r(R^{p^n})\}_n^{\gamma}$ for the induced projective system. We get
a short exact sequence of projective systems
\[
	\begin{tikzcd}
		1\rar& \{U_r(R^{p^n})\}_n^{\gamma}\rar&
		\{B_r(R^{p^n})\}_n^{\gamma}\rar&
		\{\G_m^r(R^{p^n})\}_n\rar\lar[swap, bend right=20]{s_R}& 1,
	\end{tikzcd}
\]
where the twist on the right hand term is trivial, as
$\G_m^r(R^{p^n})$ is abelian. We write $\varprojlim^\gamma$ and
$\Rlim^{\gamma}$ for the limit and derived limit of the twisted
systems. Note that the section $s_R$ induces a section of
$\{B_r(R^{p^n})\}_n^{\gamma}\twoheadrightarrow \{\G_m^r(R^{p^n})\}_n$, so we obtain a short exact sequence of pointed
sets
\[
	\begin{tikzcd}
		1\rar& \Rlim^\gamma_n U_r(R^{p^n})\rar[hook]&
	\Rlim^\gamma_n B_r(R^{p^n})\rar& \Rlim_n \G_m^r(R^{p^n})\rar& 1
\end{tikzcd}
\]
by Lemma \ref{lemma:Rlim}.
Moreover, there
is a canonical bijection
\[\Rlim_n B_r(R^{p^n})\xrightarrow{\cong}\Rlim_n^{\gamma}
	B_r(R^{p^n})\]
mapping a class $[ (b_n)]$ to $[(b_ns_R(\gamma_n)))]$. It is straight-forward
to check that this is well-defined. We get a commutative diagram
with bijective vertical arrows and exact rows:
\[
	\begin{tikzcd}
		1\rar&\Rlim_n U_r(R^{p^n})\rar[hook]&\Rlim_n
		B_r(R^{p^n})\rar\dar{\cong}[swap]{1\mapsto s_R(c)}&\Rlim_n
		\G_m^{r}(R^{p^n})\dar{\cong}[swap]{1\mapsto c}\lar[bend
		right=20, swap]{s_R}\rar&1\\
		1\rar&\Rlim^{\gamma}_n U_r(R^{p^n})\rar[hook]&\Rlim^{\gamma}_n B_r(R^{p^n})\rar&\Rlim_n
		\G_m^{r}(R^{p^n})\rar\lar[bend right=20, swap]{s_R}&1
	\end{tikzcd}
	\]
This way we can identify the fiber of $\Rlim_n
B_r(R^{p^n})\rightarrow \Rlim_n \G_m^r(R^{p^n})$ over
$c^{-1}$ with $\Rlim^{\gamma}_n U_r(R^{p^n})$.

Coming back to the sequence \eqref{Trkersequence}, we see that we
can identify the fiber of
\[\varphi^{-1}(\image s_{k\llparen t^{-1}\rrparen})\rightarrow
	\Rlim_n \G^r_m(k[t^{\pm 1}]^{p^n})\]
over $c^{-1}$ with
\begin{equation}\label{eq:Kr}K_r^\gamma:=\ker\left( \Rlim_n^{\gamma} U_r(k[t^{\pm 1}]^{p^n})\rightarrow
	\Rlim^{\gamma}_n U_r(k\llparen t^{-1}\rrparen^{p^n})\right)\end{equation}
and our objective is to prove the following lemma.
\begin{lemma}\label{lemma:Gis}
The inclusion $k[t^{\pm 1}]\hookrightarrow k\llparen t \rrparen$ induces a
surjection
\begin{equation}\label{eq:twistediso}
	K_r^\gamma\twoheadrightarrow \Rlim^{\gamma}_n U_r(k\llparen
	t\rrparen^{p^n}),
\end{equation}
for all $c$.
\end{lemma}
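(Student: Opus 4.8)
The plan is to deduce Lemma~\ref{lemma:Gis} by dévissage along the lower central series of $U_r$ from the case $G=\G_a$, which is then an explicit computation with extensions of rank-one stratified bundles. For $1\le i\le r$ let $U_r^{(i)}\subseteq U_r$ be the closed subgroup scheme of matrices differing from the identity only in entries $(a,b)$ with $b-a\ge i$; thus $U_r^{(1)}=U_r$, $U_r^{(r)}=1$, each $U_r^{(i)}/U_r^{(i+1)}\cong\G_a^{r-i}$ is central in $U_r/U_r^{(i+1)}$, and each $U_r^{(i)}$ is normalized by the diagonal torus. Hence the twisting by $\gamma$ restricts to all of the subquotient systems, the conjugation action of the torus on $U_r^{(i)}/U_r^{(i+1)}$ being given by the characters attached to the $i$-th off-diagonal. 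Applying $\varprojlim$ to the short exact sequences of twisted systems, Lemma~\ref{lemma:Rlim}\ref{item:Rlim3} produces, for each $R\in\{k[t^{\pm1}],k\llparen t\rrparen,k\llparen t^{-1}\rrparen\}$, a sequence in which the abelian group $\Rlim^{\gamma}_n(U_r^{(i)}/U_r^{(i+1)})(R^{p^n})$ acts on $\Rlim^{\gamma}_n(U_r/U_r^{(i+1)})(R^{p^n})$ with orbit space $\Rlim^{\gamma}_n(U_r/U_r^{(i)})(R^{p^n})$; moreover the connecting map of Lemma~\ref{lemma:Rlim}\ref{item:Rlim2} vanishes, because any compatible family for the twisted system on $U_r/U_r^{(i)}$ lifts, by adjoining zeros in the new matrix positions, to one on $U_r/U_r^{(i+1)}$ (conjugation by a diagonal matrix respects the off-diagonal block structure). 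Thus all these sequences are strongly exact in the sense of Definition~\ref{defn:strongexactness}.

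One then argues by induction on $j$ from $1$ to $r$, proving \eqref{eq:twistediso} with $U_r/U_r^{(j)}$ in place of $U_r$; the case $j=1$ is trivial (as $U_r/U_r^{(1)}=1$) and $j=r$ is the lemma. For $2\le j\le r$, restriction along $k[t^{\pm1}]\hookrightarrow k\llparen t^{-1}\rrparen$ is a morphism of the strongly exact sequences attached to $1\to U_r^{(j-1)}/U_r^{(j)}\to U_r/U_r^{(j)}\to U_r/U_r^{(j-1)}\to 1$, so --- granting the surjectivity of $\Rlim^{\gamma}_n(U_r^{(j-1)}/U_r^{(j)})(k[t^{\pm1}]^{p^n})\to\Rlim^{\gamma}_n(U_r^{(j-1)}/U_r^{(j)})(k\llparen t^{-1}\rrparen^{p^n})$, which is the abelian ($\G_a$) input treated below --- Lemma~\ref{lemma:snake}\ref{item:snake1} yields a strongly exact kernel sequence $1\to K^{\gamma}_{U_r^{(j-1)}/U_r^{(j)}}\to K^{\gamma}_{U_r/U_r^{(j)}}\to K^{\gamma}_{U_r/U_r^{(j-1)}}\to 1$. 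Restriction along $k[t^{\pm1}]\hookrightarrow k\llparen t\rrparen$ maps this kernel sequence to the strongly exact sequence over $k\llparen t\rrparen$, and Lemma~\ref{lemma:snake}\ref{item:snake2} deduces the surjection \eqref{eq:twistediso} for $U_r/U_r^{(j)}$ from the corresponding surjections for $U_r^{(j-1)}/U_r^{(j)}$ (a product of copies of $\G_a$, hence the $\G_a$-case) and for $U_r/U_r^{(j-1)}$ (induction hypothesis). This reduces the entire statement to $G=\G_a$, the transition maps of the three projective systems being twisted by a single character.

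For $G=\G_a$, replacing $\gamma$ by the ``digit'' representative $\gamma_n=(t^{\alpha^{(i)}_np^n})_i$ with $\alpha^{(i)}_n\in[0,p)$ --- which changes the twisted systems only up to the canonical isomorphisms constructed just before Lemma~\ref{lemma:Gis} --- makes each transition map multiplication by a monomial $t^{d_np^n}$ ($d_n\in[-(p-1),p-1]$ a difference of digits), so $\Rlim^{\gamma}_n\G_a(R^{p^n})$ is the cokernel of the $k$-linear operator $1-T$ on $\prod_nR^{p^n}$, $T((g_n)_n)=(t^{d_np^n}g_{n+1})_n$; via Propositions~\ref{prop:stratAndRlim} and~\ref{prop:rank1} it is the group of extensions of $\mathcal{O}_R(\alpha'')$ by $\mathcal{O}_R(\alpha')$, where $\alpha',\alpha''\in\Z_p$ and $\beta:=\alpha'-\alpha''$ is a lift of the character, and $K_r^{\gamma}$ is the subgroup of those extensions which split over $k\llparen t^{-1}\rrparen$. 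The sought surjection then says that every extension of $\mathcal{O}_{k\llparen t\rrparen}(\alpha'')$ by $\mathcal{O}_{k\llparen t\rrparen}(\alpha')$ is the restriction of an extension of $\mathcal{O}_{\G_m}(\alpha'')$ by $\mathcal{O}_{\G_m}(\alpha')$ which is regular singular at $\infty$; it is proved by an explicit cocycle computation with partial fraction decompositions at $0$ and $\infty$ --- represent the class by a tuple of Laurent series, cancel its ``holomorphic at $0$'' part by a geometric series in $T$, and recognize the remaining Laurent-polynomial tuple as a coboundary over $k\llparen t^{-1}\rrparen$, the pole at $0$ of the global extension being ``invisible at $\infty$''. \emph{The main obstacle} is that the convergence of these geometric series is delicate and must be analysed case-by-case according to $\beta\in\Z_p$: the hardest case is $\beta\in\Z$, i.e.\ the character is trivial, where the extension is a rank-one twist of a unipotent one and the series genuinely diverge, so that a separate, more hands-on argument is needed; that case amounts to the essential surjectivity of $\restr:\Strat^{\unip,\rsi}(\G_m)\to\Strat^{\unip}(k\llparen t\rrparen)$. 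The remainder is bookkeeping with the non-abelian homological algebra of Section~\ref{sec:Rlim}.
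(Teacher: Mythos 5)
Your d\'evissage along the lower central series of $U_r$ is sound, and it is a legitimate alternative to the paper's reduction (the paper instead filters by the subgroups $G_i$ supported in the last column and inducts on the rank, peeling off $U_{r-1}=U_r/G_{r-1}$). The verifications you give --- centrality of the graded pieces, stability of the filtration under the diagonal twist, surjectivity on $\varprojlim$ via the ``adjoin zeros'' section, and the two applications of Lemma \ref{lemma:snake} --- all go through. The problem is that this machinery only \emph{reduces} the Lemma to the case of a single character-twisted $\G_a$, and that case, which is the actual content of the Lemma (the paper's Lemma \ref{lemma:twistedRank2}), is not proved in your proposal. You describe the intended computation (represent the class by a tuple of Laurent series, sum a geometric series in the shift operator $T$, split off the part supported in non-negative degrees) and then explicitly concede that for $\beta\in\Z$ ``the series genuinely diverge, so that a separate, more hands-on argument is needed''; worse, you reformulate that missing case as the essential surjectivity of $\restr:\Strat^{\unip,\rsi}(\G_m)\to\Strat^{\unip}(k\llparen t\rrparen)$, which is precisely the $c=1$ instance of the statement under proof, so as written the argument is circular exactly where it is hardest.

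The missing ingredient is supplied in the paper by two normalizations (Lemmas \ref{lemma:twistedRank2} and \ref{lemma:triviality}). First, the representative $\gamma$ is chosen so that the twisting exponent $\alpha$ is either $0$ or lies in $\Z_p\setminus\Z$; in particular when $\beta\in\Z$ the twist is taken to be literally trivial. This matters because for a bad digit expansion such as $\alpha=-1$ the quantities $\alpha_0+\alpha_1p+\cdots+\alpha_np^n-p^{n+1}$ are not unbounded below and the convergence argument breaks down. Second, and crucially, every class is shown to admit a representative $(a_n)_n$ with $\supp(a_n)\subset p^n\Z\setminus(\alpha_np^n+p^{n+1}\Z)$; with this support condition the terms of the ``geometric series'' have pairwise disjoint supports, so the series always converges in $k^{\Z}$, and triviality over $k\llparen t\rrparen$ (resp.\ over $k\llparen t^{-1}\rrparen$) becomes the clean criterion $a_n\in k\llbracket t\rrbracket$ (resp.\ $a_n\in t^{-1}k\llbracket t^{-1}\rrbracket$) for $n\gg0$. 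Both the surjectivity you need and the surjectivity of restriction to $k\llparen t^{-1}\rrparen$ (required for your application of Lemma \ref{lemma:snake}) then follow by splitting $a_n=a_n^{\geq 0}+a_n^{<0}$, uniformly in $\alpha$ --- the case you flag as hopeless is handled together with the others once the normalizations are in place. Without this (or an equivalent device) your rank-one step is not a proof, and the Lemma is not established.
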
 
If the lemma holds, the discussion preceding it implies that the map \[\varphi^{-1}(\image s_{k\llparen
	t^{-1}\rrparen})\rightarrow \Rlim_n B_r(R^{p^n})\] is surjective. 
\begin{proof}
To show that
\eqref{eq:twistediso} is surjective we induct on the rank $r\geq 2$;
the case $r=2$ is Lemma
\ref{lemma:twistedRank2} below.

Let $r>2$ and assume that we know that \eqref{eq:twistediso} is surjective for all ranks
$<r$. 
For $i=1,\ldots, r-1$ let $G_i\subset U_r$ be the normal closed subgroup of
$U_r$ given by matrices of the form
\[
\left(
\begin{array}{c|c}
	\raisebox{-40pt}{{\mbox{{$\id$}}}}
	&a_1 \\[-35pt]
	&\vdots \\
	&a_{i}\\
	&0\\
	&\vdots\\
	&0\\\hline
	0\;\cdots\; 0 & 1
\end{array}
\right).
\]
and $G_0=0$. Then the groups $G_i$ are also normal and closed in $B_r$.
Note that $G_i\cong \G^i_a$, and
one easily checks that $G_i/G_{i-1}$ is a
\emph{central} subgroup of $U_r/G_{i-1}$.
Now let $R=k[t^{\pm 1}]$ or $R=k\llparen t\rrparen$. Then
\[(U_r/G_i)(R)=U_r(R)/G_i(R)\text{ and } (G_i/G_{i-1})(R)=G_i(R)/G_{i-1}(R).\]
Write $\{G_i(R^{p^n})\}^{\gamma}$ for
the induced projective system. Note that this is well defined,
as conjugation by a diagonal matrix fixes $G_i\subset B_r$,
and also note that we have
\[\{G_i(R^{p^n})\}^{\gamma}/\{G_{i-1}(R^{p^n})\}^{\gamma}=\left\{\left(G_i/G_{i-1}\right)(R^{p^n})\right\}^{\gamma},\]
and
\[\{U_r(R^{p^n})\}^{\gamma}/\{G_i(R^{p^n})\}^{\gamma}=\left\{\left(U_r/G_i\right)(R^{p^n})\right\}^{\gamma},\]
where both right hand sides are meaningful, again because
$G_i\subset B_r$ is fixed by conjugation with diagonal
matrices.

We claim that for every $i<r$, one obtains a short exact sequence
\begin{equation}\label{eq:Gisequence}	
	\maxsizebox{\textwidth}{!}{
	\begin{tikzcd}[column sep=.3cm]
	1\rar& \varprojlim_n^{\gamma} \left(G_i/G_{i-1}\right)(R^{p^n})\rar& \varprojlim^{\gamma}_n \left(U_r/G_{i-1}\right)(R^{p^n})\rar&\varprojlim^{\gamma}_n \left(U_r/G_i\right)(R^{p^n})\rar&1
\end{tikzcd}}
\end{equation}
We only have to check that the third arrow is surjective. For every $i$ the
projections $G_{r-1}/G_{i-1}\rightarrow G_{r-1}/G_i$ and
$U_r/G_{i-1}\rightarrow U_r/G_{r-1}=U_{r-1}$ have a section. It follows
that for every $i$ we have a commutative diagram
\begin{equation*}
	\begin{tikzcd}[column sep=.275cm]
		1\rar&\varprojlim_n^{\gamma}(G_{r-1}/G_{i-1})(R^{p^n})\rar\dar[two
		heads]&\varprojlim_n^{\gamma}(U_r/G_{i-1})(R^{p^n})\rar\dar&\varprojlim_n^{\gamma}U_{r-1}(R^{p^n})\rar\dar[-,double equal sign distance ]&1\\
		1\rar&\varprojlim_n^{\gamma}(G_{r-1}/G_{i})(R^{p^n})\rar&\varprojlim_n^{\gamma}(U_r/G_{i})(R^{p^n})\rar&\varprojlim_n^{\gamma}U_{r-1}(R^{p^n})\rar&1.
	\end{tikzcd}
\end{equation*}
We see that the middle vertical arrow is surjective, as claimed.\footnote{
Here we slightly abuse notation by writing
$\varprojlim^{\gamma}U_{r-1}(R^{p^n})$ for the inverse limit of the projective
system $\{U_{r-1}(R^{p^n})\}$ twisted by the \emph{image} of $\gamma$ under
the projection $\prod_n \G^{r}_m(R^{p^n})\rightarrow \prod_n
\G_m^{r-1}(R^{p^n})$ onto the first $r-1$ factors.}

Since
$G_i/G_{i-1}$ is central in $U_r/G_{i-1}$, Lemma \ref{lemma:Rlim},
\ref{item:Rlim3}, shows that
 we obtain from
\eqref{eq:Gisequence} a
strongly exact (Definition \ref{defn:strongexactness})
short exact sequence of pointed sets
\[\maxsizebox{\textwidth}{!}{
	\begin{tikzcd}
		1\rar& \Rlim_n^{\gamma}
		\left(G_i/G_{i-1}\right)(R^{p^n})\rar&
	\Rlim^{\gamma}_n
	\left(U_r/G_{i-1}\right)(R^{p^n})\rar&\Rlim^{\gamma}_n
	\left(U_r/G_i\right)(R^{p^n})\rar&
	1\end{tikzcd}
}
\]
If we write
\begin{align*}
	J^{\gamma}_{i-1}&:=\ker\left(\Rlim^{\gamma}_n (G_i/G_{i-1})(k[t^{\pm
	1}]^{p^n})\rightarrow \Rlim^{\gamma}_n(G_i/G_{i-1})(k\llparen t^{-1}\rrparen^{p^n})\right)\\
	K^{\gamma}_{r,i-1}&:=\ker\left(\Rlim^{\gamma}_n (U_r/G_{i-1})(k[t^{\pm
	1}]^{p^n})\rightarrow \Rlim^{\gamma}_n(U_r/G_{i-1})(k\llparen
	t^{-1}\rrparen^{p^n})\right),
\end{align*}
then $K^{\gamma}_{r,0}=K^{\gamma}_r$, and
$K^{\gamma}_{r,r-1}=K^{\gamma}_{r-1,0}=K^{\gamma}_{r-1}$ in the notation from
\eqref{eq:Kr}.
Lemma \ref{lemma:snake} shows that we have strongly exact
short exact sequences of pointed sets
\[\begin{tikzcd}
		1\rar& J_{i-1}^{\gamma}\rar& K^{\gamma}_{r,i-1}\rar&
		K^{\gamma}_{r,i}\rar& 1,
\end{tikzcd}\]
together with morphisms
\[\maxsizebox{\textwidth}{!}{
	\begin{tikzcd}[column sep=.3cm]
	1\rar&J^{\gamma}_{i-1}\dar{f_{i-1}} \rar&
	K^{\gamma}_{r,i-1}\dar{g_{i-1}}\rar& K_{r,i}^{\gamma}\dar{g_{i}}\rar&1\\
	1\rar&\Rlim_n^{\gamma}(G_i/G_{i-1})(k\llparen t\rrparen^{p^n})\rar& \Rlim^{\gamma}_n \{(U_r/G_{i-1})(k\llparen t \rrparen^{p^n})\rar& \Rlim^{\gamma}_{n}(U_r/G_i)(k\llparen t\rrparen^{p^n})\rar&1
\end{tikzcd}}\]
Since $G_i/G_{i-1}=\G_a$,  Lemma \ref{lemma:twistedRank2} below shows that $f_{i-1}$ is a
bijection for $i=1,\ldots, r$. By induction hypothesis we know that
\[g_{r-1}:K^{\gamma}_{r,r-1}=K^{\gamma}_{r-1}\rightarrow
	\Rlim_n^{\gamma}U_{r-1}(k\llparen t^{p^n}\rrparen)\] is
surjective. Thus, applying Lemma \ref{lemma:snake} repeatedly, it follows that
$g_{0}$ is  surjective. But $g_0$ is precisely the morphism from
\eqref{eq:twistediso}, which completes the proof, up to verification of Lemma
\ref{lemma:twistedRank2} below.
\end{proof}

\begin{lemma}\label{lemma:twistedRank2}Identify $\G_a$ with $U_2$, and consider
	$\G_m^2$ as the group of invertible diagonal
	rank $2$ matrices.
	Every class $c\in \Rlim_n \G_m^2(k[t^{\pm 1}]^{p^n})$
	has a representative  $\gamma\in \prod_n
	\G_m^{2}(k[t^{\pm 1}]^{p^n})$,
	such that the inclusion $k[t^{\pm
	1}]\hookrightarrow k\llparen t\rrparen$
	induces a bijection
	\[\hspace{-.15cm}\ker\left( \Rlim^{\gamma}_n U_2(k[t^{\pm
	1}]^{p^n})\rightarrow \Rlim^{\gamma}_n U_2(k\llparen
	t^{-1}\rrparen^{p^n})\right)\xrightarrow{\cong}
	\Rlim^{\gamma}_n	U_2(k\llparen t\rrparen^{p^n}).\]
\end{lemma}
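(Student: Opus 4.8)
The plan is to translate everything into ordinary homological algebra of projective systems of $k$-vector spaces and then isolate a single convergence estimate.

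\emph{Unwinding the twist.} Identifying $U_2$ with $\G_a$, conjugation of a unipotent upper triangular matrix by a diagonal one multiplies the off-diagonal entry by the ratio of the diagonal entries; hence for $R=k[t^{\pm 1}]$, $k\llparen t\rrparen$ or $k\llparen t^{-1}\rrparen$ the system $\{U_2(R^{p^n})\}^{\gamma}$ is the projective system $\{R^{p^n}\}$ of $k$-vector spaces with transition maps $x\mapsto\beta_n\cdot\iota(x)$, where $\beta_n:=\gamma_n^{(2)}/\gamma_n^{(1)}\in(R^{p^n})^{\times}$ and $\iota$ is the inclusion $R^{p^{n+1}}\hookrightarrow R^{p^n}$; so these systems depend only on the class of $(\beta_n)_n$, which represents $c_2-c_1\in\Rlim_n\G_m(k[t^{\pm1}]^{p^n})\cong\Z_p/\Z$. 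By Proposition \ref{prop:rank1}, \ref{item:representativeOfOalpha}, I would choose the representative $\gamma$ of $c$ so that $\beta_n=t^{\alpha_np^n}$, with $\alpha=\sum\alpha_np^n$, $\alpha_n\in[0,p)$, a lift of $c_2-c_1$ (e.g. $\gamma_n^{(1)}=t^{\delta_np^n}$ for the digits of a lift of $c_1$, and $\gamma_n^{(2)}=\gamma_n^{(1)}t^{\alpha_np^n}$), taking $\alpha=0$ when $c_1=c_2$. Setting $e_m:=\sum_{n<m}\alpha_np^n$ one has $0\le e_m<p^m$, $e_m$ non-decreasing, $p^m-e_m\to\infty$, and $e_m\to\infty$ exactly when $c_1\ne c_2$; the iterated transition from level $m$ to level $n$ is multiplication by $t^{e_m-e_n}$.

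\emph{Reduction to one isomorphism.} Let $r_0,r_\infty$ be the maps $\Rlim^{\gamma}_nk[t^{\pm p^n}]\to\Rlim^{\gamma}_nk\llparen t^{p^n}\rrparen$, resp.\ $\to\Rlim^{\gamma}_nk\llparen t^{-p^n}\rrparen$, induced by the two inclusions; the map in the lemma is $r_0|_{\ker r_\infty}$. A formal diagram chase shows it is a bijection as soon as $(r_0,r_\infty)\colon\Rlim^{\gamma}_nk[t^{\pm p^n}]\to\Rlim^{\gamma}_nk\llparen t^{p^n}\rrparen\oplus\Rlim^{\gamma}_nk\llparen t^{-p^n}\rrparen$ is an isomorphism (injectivity of $(r_0,r_\infty)$ gives injectivity of $r_0|_{\ker r_\infty}$; surjectivity lets one lift a target class to one killed by $r_\infty$). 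To prove this, apply the six-term exact sequence of Lemma \ref{lemma:Rlim}, \ref{item:Rlim2} (abelian case) to the short exact sequence of twisted systems
\[0\to\{k[t^{\pm p^n}]\}^{\gamma}\xrightarrow{f\mapsto(f,-f)}\{k\llparen t^{p^n}\rrparen\oplus k\llparen t^{-p^n}\rrparen\}^{\gamma}\to\{Q_n\}^{\gamma}\to0,\]
with $Q_n$ the cokernel: then $(r_0,-r_\infty)$ has cokernel $\Rlim^{\gamma}_nQ_n$ and its kernel is a quotient of $\varprojlim^{\gamma}_nQ_n$, so it suffices to show $\Rlim^{\gamma}_nQ_n=0$ and $\varprojlim^{\gamma}_n(k\llparen t^{p^n}\rrparen\oplus k\llparen t^{-p^n}\rrparen)\twoheadrightarrow\varprojlim^{\gamma}_nQ_n$.

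\emph{Computing $Q_n$.} Using $k\llparen t^{\pm p^n}\rrparen=k[t^{\pm p^n}]+k\llbracket t^{\pm p^n}\rrbracket$ one identifies $Q_n$ with $\bigl(k\llbracket t^{p^n}\rrbracket\oplus k\llbracket t^{-p^n}\rrbracket\bigr)/k$ (antidiagonal), and filtering by the image of $k\llbracket t^{p^n}\rrbracket$ gives a two-step filtration of $\{Q_n\}^{\gamma}$ whose graded pieces are, up to transparent finite-dimensional ``constant'' systems, the completed tails $\{t^{p^n}k\llbracket t^{p^n}\rrbracket\}^{\gamma}$ and $\{t^{-p^n}k\llbracket t^{-p^n}\rrbracket\}^{\gamma,\mathrm{tr}}$ (the latter with the twisted transition followed by truncation of non-negative powers of $t$). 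The heart of the argument is the vanishing of $\Rlim^{\gamma}$ of these tails by telescoping: for $x_n\in t^{p^n}k\llbracket t^{p^n}\rrbracket$ the element $g_n:=\sum_{m\ge n}t^{e_m-e_n}x_m$ is a preimage under $1-(\mathrm{shift})$ because $v_t(t^{e_m-e_n}x_m)\ge p^m\to\infty$, so the series converges $t$-adically with sum in the closed subring $k\llbracket t^{p^n}\rrbracket$; for $x_n\in t^{-p^n}k\llbracket t^{-p^n}\rrbracket$ the same telescoping works in the $t^{-1}$-adic topology, now via $v_{1/t}(t^{e_m-e_n}x_m)=v_{1/t}(x_m)-(e_m-e_n)\ge p^m-e_m\to\infty$ (truncation only increases $v_{1/t}$). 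The corresponding $\varprojlim^{\gamma}$ vanish since $\bigcap_nt^{\pm p^n}k\llbracket t^{\pm p^n}\rrbracket=0$. Feeding this through the filtration yields $\Rlim^{\gamma}_nQ_n=0$, and $\varprojlim^{\gamma}_nQ_n$ is at most the one-dimensional constant contribution, visibly hit by $\varprojlim^{\gamma}_n(k\llparen t^{p^n}\rrparen\oplus k\llparen t^{-p^n}\rrparen)$ (in fact $\varprojlim^{\gamma}_nQ_n=0$ when $c_1\ne c_2$); hence $(r_0,r_\infty)$ is an isomorphism and the lemma follows. I expect the only real difficulty to be organizational: multiplication by $t^{\alpha_np^n}$ does not respect the naïve ``positive/constant/negative powers'' splitting once some $\alpha_n\ge1$, so the filtration steps must be chosen stable under the twist (e.g.\ $\{v_t\ge-e_n\}=k\llbracket t^{p^n}\rrbracket$ at level $n$) and one must track the finitely many boundary monomials migrating between graded pieces at each level; the convergence estimates themselves are routine.
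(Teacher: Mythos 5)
Your argument is correct, but it is organized quite differently from the paper's. The paper proves the lemma by first putting every class into a normal form (a representative $(a_n)_n$ with $\supp(a_n)\subset p^n\Z\setminus(\alpha_np^n+p^{n+1}\Z)$) and then characterizing, for each of the three rings separately, exactly when such a representative is a coboundary (Lemma \ref{lemma:triviality}); surjectivity then comes from splitting $a_n=a_n^{\geq 0}+a_n^{<0}$ and injectivity from the characterization over $k[t^{\pm 1}]$. You instead prove the stronger statement that $(r_0,r_\infty)\colon\Rlim^{\gamma}_nk[t^{\pm p^n}]\to\Rlim^{\gamma}_nk\llparen t^{p^n}\rrparen\oplus\Rlim^{\gamma}_nk\llparen t^{-p^n}\rrparen$ is bijective, via the Mayer--Vietoris sequence for $0\to k[t^{\pm p^n}]\to k\llparen t^{p^n}\rrparen\oplus k\llparen t^{-p^n}\rrparen\to Q_n\to 0$, reducing everything to $\Rlim^{\gamma}_nQ_n=0$ plus a $\varprojlim$ computation; the only analytic input is the same pair of telescoping estimates ($v_t\geq e_m-e_n+p^m\to\infty$, resp.\ $v_{1/t}\geq p^m-e_m\to\infty$) that drive the paper's Lemma \ref{lemma:triviality}. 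I checked the points you flag as delicate and they are in fact cleaner than you fear: with representatives normalized in $k\llbracket t^{p^n}\rrbracket\oplus k\llbracket t^{-p^n}\rrbracket$, the subsystem of classes $(u,0)$ is genuinely stable under the twisted transitions, and on the quotient piece $t^{-p^{n+1}}k\llbracket t^{-p^{n+1}}\rrbracket$ no truncation ever occurs since $\alpha_np^n-p^{n+1}<-p^n$; the only imprecision is that the sub graded piece is $\{k\llbracket t^{p^n}\rrbracket\}^{\gamma}$ rather than its tail (the constants do not split off when some $\alpha_n\geq 1$), but the telescoping sum converges on all of $k\llbracket t^{p^n}\rrbracket$ when $\alpha\notin\Z$ because $e_m\to\infty$, and the case $\alpha=0$ is untwisted, so $\Rlim^{\gamma}$ still vanishes. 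What each approach buys: yours subsumes, for $r=2$, both this lemma and the base case of the surjectivity lemma in Section \ref{sec:applications}, and isolates the convergence estimate from the bookkeeping; the paper's explicit support condition and triviality criteria are, however, reused verbatim in the induction of Lemma \ref{lemma:Gis} and in Section \ref{sec:applications}, so the normal form is not wasted effort there.
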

\begin{proof}
	If $c$ has a representative
	$\gamma=(\gamma_1,\gamma_2)\in \prod_n
	\G_m(k[t^{\pm 1}]^{p^n})^2$, such that
	$\gamma_1=\gamma_2$, then twisting  the projective
	system $\{U_2(k[t^{\pm 1}]^{p^n})\}_n$ by $\gamma$ has no effect, as
	matrices of the form
	\[\begin{pmatrix}
			\lambda& 0 \\
			0&\lambda\end{pmatrix},\; \lambda \in R\]
	are central in $B_2(R)$. This is a particular case of
	the fact that $\Ext^1(L,L)\cong
	\Ext^1(\mathbf{1},\mathbf{1})$ if $L$ is a rank $1$
	object in a Tannaka category.

	It follows that we may assume that the representative $\gamma$ of $c$
	has the form $\gamma=(1,\gamma_2)$, $\gamma_2=(\gamma_{2,n})_n\in
	\prod_n \G_m(k[t^{\pm 1}]^{p^n})$.
	Moreover by Proposition
	\ref{prop:rank1}, \ref{item:representativeOfOalpha}, we may assume
	that $\gamma_{2,n}\in\G_m(k[t^{\pm 1}]^{p^n})$ has the form
	$\gamma_{2,n}=t^{\alpha_np^n}$, with $\alpha_n\in
	[0,p)$. Write $\alpha:=\sum_{i\geq
	0}\alpha_ip^{i}$, $\alpha_i\in [0,p-1)$, for the corresponding $p$-adic
	integer. If $\alpha\in \Z$, then we may assume
	$\gamma_{2,n}=1$ for all $n$, Proposition \ref{prop:rank1},
	\ref{item:triviality}.

	If we identify $\G_a$ with $U_2$
	via
	\[\G_m(R)\ni a\leftrightarrow \begin{pmatrix}1&a\\0&1\end{pmatrix}\in
			U_2(R),\]
	then the transition morphisms
	$U_2(k\llparen t\rrparen^{p^{n+1}})\rightarrow
	U_2(k\llparen t \rrparen^{p^n})$ in the
	projective system $\{U_2(k\llparen
		t\rrparen^{p^n}\}^{\gamma}$ are $a\mapsto
		t^{\alpha_np^n}a$.

To state the main technical ingredient we need the following definition.
\begin{definition}\label{defn:partition}
	We consider an element $f$ of the additive group $k^{\Z}$ as a formal
	series
	\[f=\sum_{i\in \Z}f_i t^i, f_i\in k.\]
	Define $\supp(f):=\{i\in \Z| f_i\neq 0\}$. If $U\subset \Z$ is a
	subset, then we write $f^{(U)}:=\sum_{i\in U}f_i t^{i}$. If
	$\Z=\bigcup_{j\in \Z} U_j$ with $U_j$ pairwise disjoint subsets, then
	\[ f=\sum_{j\in \Z} f^{(U_j)}.\]

	Now if $R=k[t^{\pm 1}]$, $k\llparen t \rrparen$ or
	$k\llparen t^{-1}\rrparen$, we can consider the additive
	group underlying $R$ as subgroup of $k^\Z$, and the notations above apply
	to elements $f\in R$. In particular, the additive group $k\llparen
	t\rrparen$ (resp.~$k\llparen t^{-1}\rrparen$, resp.~$k[t^{\pm 1}]$) is the subgroup of $k^{\Z}$ consisting of formal series $f$
	with $\supp(f)$ bounded from below (resp.~from above,
	resp.~from above and below).
\end{definition}

\begin{lemma}\label{lemma:triviality}
		Let
		$R=k[t^{\pm 1}]$, $k\llparen t \rrparen$ or
		$k\llparen t^{-1}\rrparen$, and  
		$a\in \Rlim_n^{\gamma}(U_2(R^{p^n}))$, where $\gamma\in
		\prod_{n\geq 0}\G_m^2(R^{p^n})$ is of the form
		$(1,t^{\alpha_np^n})_n$, with $\alpha_n\in [0,p)$ and $\alpha:=\sum_{n\geq
		0}\alpha_np^n$ either $=0$ or $\not\in \Z$. 
		\begin{enumerate}
			\item\label{lemma:nicerep} The class $a$ has a
				representative $(a_n)_n\in
				\prod_{n\geq 0}\G_a(R^{p^n})$, satisfying
				\begin{equation}\label{eq:supportcondition}\supp({a}_n)\subset p^n\Z\setminus
					\left(\alpha_np^n+p^{n+1}\Z\right).\end{equation}
			\item\label{lemma:triv0} If $R=k\llparen t\rrparen$,
				and if $(a_n)_n$ satisfies
				\eqref{eq:supportcondition}, then
				the class of $(a_n)_n$ is
				trivial, if and only if
				$a_n\in k\llbracket
				t\rrbracket$ for $n\gg 1$.
			\item\label{lemma:trivInfty}If $R=k\llparen
				t^{-1}\rrparen$, and if $(a_n)_n$ satisfies
				\eqref{eq:supportcondition}, then
				the class of $(a_n)_n$ is
				trivial, if and only if
				$a_n\in t^{-1}k\llbracket
				t^{-1}\rrbracket$ for $n\gg 1$.
			\item\label{lemma:trivGm} If $R=k[t^{\pm 1 }]$, and if
				$(a_n)_n$ satisfies
				\eqref{eq:supportcondition}, then the
				following are equivalent:
				\begin{enumerate}[label=\emph{{\roman*}.}, ref={\roman*}.]
					\item\label{it:gmtriv1} The class of
						$(a_n)_n$ is trivial.
					\item\label{it:gmtriv2} $(a_n)_n$  is
						trivial in
						$\Rlim_n^{\gamma}U_2(k\llparen
						t\rrparen^{p^n})$ and $\Rlim_n^{\gamma}U_2(k\llparen t^{-1}\rrparen^{p^n})$.
					\item \label{it:gmtriv3}$a_n=0$ for $n\gg1$.
				\end{enumerate}
		\end{enumerate}
	\end{lemma}
	\begin{proof}
		\begin{enumerate}[label={\alph*)},ref={\alph*)}]
			\item An element $f\in R^{p^n}$ can be uniquely
				written as sum
				\[f=f^{(p^n\Z\setminus
					(\alpha_np^n+p^{n+1}\Z))}+f^{(\alpha_np^n+p^{n+1}\Z)};\]
				See Definition
				\ref{defn:partition} for this notation. To increase legibility we
				define
				\[f':=f^{(p^n\Z\setminus(\alpha_np^n+p^{n+1}\Z))},\;
					f'':=t^{-\alpha_np^n}f^{(\alpha_np^n+p^{n+1}\Z)}.\]
					Let $(\tilde{a}_n)_n\in \prod_{n\geq
					1}\G_a(R^{p^n})$ be a representative
					of $a$.
				Define $y_0:=0$, and inductively
				\[y_{n+1}:=\underbrace{y''_n}_{\in
					R^{p^{n+1}}}-\underbrace{\tilde{a}''_n}_{\in
						R^{p^{n+1}}}\in
					R^{p^{n+1}}.\] Then we
				check that 
				\begin{align*}y_n + (\tilde{a}'_n-y'_n) -
					t^{\alpha_np^n}y_{n+1}&=
					(y_n-y'_n)+\tilde{a}'_n-t^{\alpha_np^n}y_{n+1}\\ 
					&=t^{\alpha_np^n}y''_n+\tilde{a}'_n
					-t^{\alpha_np^n}y''_n+t^{\alpha_np^n}\tilde{a}''_n\\
					&=\tilde{a}_n
				\end{align*} 
				so $(\tilde{a}_n)_n$ is equivalent to
				$(a_n)_n:=(\tilde{a}'_n-y'_n)_n$, and
				$\supp(a_n)\subset p^n\Z\setminus
				(\alpha_np^n+p^{n+1}\Z)$.
			\item Let $(a_n)_n$ satisfy
				\eqref{eq:supportcondition}.  First assume that $a_n\in k\llbracket
				t\rrbracket$ for $n\gg 0$. To show that
				the class of $(a_n)_n$ is trivial, it suffices
				to show that the
				sums
				\begin{equation}\label{eq:ysum}
					y_m:=a_m+\sum_{n>m}
					a_nt^{\alpha_mp^{m}+\ldots+\alpha_{n-1}p^{n-1}},
					m\geq 0
				\end{equation}
				exist in $k\llparen t\rrparen$. Indeed, if the
				sums \eqref{eq:ysum} exist, then
				$a_n=y_n-t^{\alpha_np^n}y_{n+1}$, so the
				class of $(a_n)_n$ is trivial. 
				To see that the sums exist, note  that
				\begin{multline*}\supp(a_nt^{\alpha_1p+\ldots+\alpha_{n-1}p^{n-1}})\\\subset 
				\left(\alpha_1p+\ldots+\alpha_{n-1}p^{n-1}+p^n\Z\right)\setminus
					\left(\alpha_1p+\ldots+\alpha_np^{n}+p^{n+1}\Z\right).
				\end{multline*}
				This shows that the sum \eqref{eq:ysum} exists
				in $k^{\Z}$, first with $m=0$, and then for
				all $m\geq 0$.
				Since $a_n\in k\llbracket t\rrbracket$ for
				large $n$, it  follows that the sum
				\eqref{eq:ysum} exists in $k\llparen
				t\rrparen$.

				Conversely,  assume that the class of $(a_n)_n$
				is trivial, i.e.~that there exist $y_n\in
				k\llparen t\rrparen^{p^n}$, such that
				$a_n=y_n-t^{\alpha_np^n}y_{n+1}$.  There
				exists $N\gg 0$, such that for all $n\geq N$,
				\[\supp(y_0)\cap 
					(\alpha_0+\alpha_1p+\ldots+\alpha_{n}p^{n}-\Z_{>0}p^{n+1})=\emptyset,\]
				where 
				$(\alpha_0+\alpha_1p+\ldots+\alpha_np^n-\Z_{>0}p^{n+1})$
				denotes the set of integers $m$, such that
				$\frac{m-(\alpha_0+\ldots+\alpha_np^n)}{p^{n+1}}\in
				\Z_{<0}$.
				Indeed, just take $N$ such that
				$\alpha_0+\alpha_1p+\ldots+\alpha_Np^N - p^{N+1}$ is
				smaller than $\ord(y_0)$. Such an $N$ exists,
				as the sequence
				$\alpha_1p+\ldots+\alpha_np^n-p^{n+1}$ is
				decreasing and unbounded, because by our
				assumptions $\alpha=0$ or
				$\alpha\not\in\Z$ (note that e.g.~for
				$\alpha=-1$ there would be a problem here).

				Summing up the equations
				$a_n=y_n-t^{\alpha_np^n}y_{n+1}$, we obtain
				\[
					\sum_{i\geq
				0}^na_it^{\alpha_0+\ldots+\alpha_{i-1}p^{i-1}}=y_0-t^{\alpha_0+\ldots+\alpha_np^n}y_{n+1}.
				\]
				By construction 
				\[\supp\left(\sum_{i\geq
				0}^na_it^{\alpha_0+\ldots+\alpha_{i-1}p^{i-1}}\right)
				\cap\left(\alpha_0+\ldots+\alpha_np^n+p^{n+1}\Z\right)=\emptyset.\]
				Hence, if $n> N$, then
				$y_{n}\in k\llbracket t\rrbracket$, and
				also
				$a_{n}=y_{n}-t^{\alpha_{n}p^{n}}y_{n+1}\in
				k\llbracket t\rrbracket$, which is
				what we wanted to show.
			\item The argument is very similar to the one from the
				previous part. As before, assume that
				$(a_n)_n\in \prod_n \G_a(k\llparen
				t^{-1}\rrparen^{p^n})$ satisfies
				\eqref{eq:supportcondition}.
				If the class of $(a_n)_n$ is trivial, then
				there exist $y_n\in k\llparen
				t^{-1}\rrparen^{p^n}$, such that
				\[
					a_n = y_n -
					t^{\alpha_np^n}y_{n+1}.
					\]
				Summing up we see that
				\[\sum_{i=0}^n a_i
					t^{\alpha_0+\ldots+\alpha_{i-1}p^{i-1}}=y_0-t^{\alpha_0+\ldots+\alpha_{n}p^{n}}y_{n+1}
					\]
				Again we have
				\[\supp\left(\sum_{i=0}^n a_i
					t^{\alpha_0+\ldots+\alpha_{i-1}p^{i-1}}\right)\cap
					\left(\alpha_0+\ldots+\alpha_np^{n}+p^{n+1}\Z\right)=\emptyset.\]
				Hence, if $N$ is such that 
				\[\supp(y_0)\cap\left(\alpha_0+\ldots+\alpha_np^n+\Z_{>0}p^{n+1}\right)=\emptyset\] for
				$n\geq N$, then $y_{n}\in
				t^{-1}k\llbracket t^{-1}\rrbracket$ for $n>
				N$, which shows that $a_n\in
				t^{-1}k\llbracket t^{-1}\rrbracket$ for $n>N$.
				Such an $N$ exists by the same reasoning as in
				\ref{lemma:triv0}.

				Conversely, if $a_n\in t^{-1}k\llbracket
				t^{-1}\rrbracket$ for $n\gg 0$, then the sums
				\eqref{eq:ysum} exist by the same argument
				as in \ref{lemma:triv0}, which shows that the
				class of $(a_n)_n$ is
				trivial. We remark that in contrast to the
				case of $k\llparen t\rrparen$, for this argument to
				work, it is important that $a_n$ does not
				have a constant term for $n\gg 0$, as can be
				seen, e.g., by considering the case $\alpha=0$.
			\item If $a_n\in
				k[t^{\pm 1}]$ is such that $a_n=0$ for $n\gg
				1$, then the sums \eqref{eq:ysum} exist, so
				the class of $(a_n)_n$ is trivial in
				$\Rlim^{\gamma}U_2(k[t^{\pm 1}]^{p^n})$.
				Hence we have \ref{it:gmtriv3} $\Rightarrow$
				\ref{it:gmtriv1}. We clearly also have
				\ref{it:gmtriv1} $\Rightarrow$
				\ref{it:gmtriv2}.

				Finally, if the class of $(a_n)_n$ is trivial
				over both $k\llparen t\rrparen$ and $k\llparen
				t^{-1}\rrparen$, then for $n\gg 0$ we know
				from \ref{lemma:triv0} and
				\ref{lemma:trivInfty}
				that \[a_n\in k[t^{\pm 1}]\cap k\llbracket
				t\rrbracket\cap t^{-1}k\llbracket
				t^{-1}\rrbracket=\{0\},\]
				so we are done.
		\end{enumerate}
	\end{proof}
	Returning to the proof of Lemma \ref{lemma:twistedRank2}, let $a\in
	\Rlim_n^{\gamma}\G_a(k\llparen t\rrparen^{p^n})$ be a class represented
	by 
	$(a_n)_n\in \prod_n \G_a(k\llparen t\rrparen^{p^n})$ satisfying the
	support condition \eqref{eq:supportcondition}. Every class $a$ has
	such a representative by Lemma \ref{lemma:triviality}.

	For an element $f\in k\llparen t\rrparen$ (or $k\llparen
	t^{-1}\rrparen$ or $k[t^{\pm 1}]$), we write $f=f^{\geq
	0}+f^{<0}$, where $\supp(f^{\geq 0})\subset \Z_{\geq 0}$ and
	$\supp(f^{<0})\subset \Z_{<0}$.

	We see that $(a_n)_n=(a_n^{\geq 0})_n+(a_n^{<0})_n$, and by Lemma
	\ref{lemma:triviality}, the class of $(a_n^{\geq 0})_n$ in $\Rlim_n
	\G_a(k\llparen t\rrparen^{p^n})$ is trivial.
	 It
	follows that the map 
	\[\Rlim_n^{\gamma}\G_a(k[t^{\pm 1}]^{p^n})\rightarrow
	\Rlim_n^{\gamma}\G_a(k\llparen t\rrparen^{p^n})\]
	induced by the inclusion $k[t^{\pm 1}]\hookrightarrow k\llparen t\rrparen$ is surjective. 
	
	Moreover, since $a_n^{<0}\in t^{-1}k[t^{-1}]$, the class of
	$(a_n^{<0})_n$ in
	$\Rlim^{\gamma}_n \G_a(k[t^{\pm 1}]^{p^n})$ maps to the trivial class
	in $\Rlim^{\gamma}_n \G_a(k\llparen t^{-1}\rrparen^{p^n})$, so  the
	map
	\[\hspace{-.15cm}\ker\left( \Rlim^{\gamma}_n U_2(k[t^{\pm
	1}]^{p^n})\rightarrow \Rlim^{\gamma}_n U_2(k\llparen
	t^{-1}\rrparen^{p^n})\right)\rightarrow
	\Rlim^{\gamma}_n	U_2(k\llparen t\rrparen^{p^n})\]
	is surjective. By Lemma \ref{lemma:triviality}, \ref{lemma:trivGm} it is also injective,
	so the proof is complete.
\end{proof}

\section{Applications}\label{sec:applications}
In this section we write $\Vectf_k$ for the category of finite dimensional
$k$-vector spaces, and if $G$ is an affine $k$-group scheme, we write $\Repf_k
G$ for the category of $k$-linear representations of $G$ on finite dimensional
$k$-vector spaces.

First we prove Theorem \ref{thm:ses}.
\begin{proof}[Proof of Theorem \ref{thm:ses}]
       We first show that every object of
       $\Repf_k P(\omega)$ is a successive extension of trivial
       representations of rank $1$, i.e.~that $P(\omega)$ is unipotent. If
       $i:P(\omega)\hookrightarrow \pi_1^{\Strat}(k\llparen
       t\rrparen, \omega)$ denotes the inclusion, then every object of
       $\Repf_k P(\omega)$ is a subquotient of $i^*E$ for some $E\in
       \Strat(k\llparen t\rrparen)$ by \cite[Prop.~2.21]{DeligneMilne}.
       As noted in Remark \ref{rem:MainThm}, \ref{rem:MainThm:c}, every object
       $E$ of $\Strat(k\llparen t\rrparen)$ is a successive extension
       of rank $1$ objects, so it suffices to prove that $i^*E$ is trivial, if
       $\rank E=1$.
       In this case, according to Proposition \ref{prop:rank1}, $E$ is isomorphic to $\mathcal{O}_{k\llparen t\rrparen}(\alpha)$ for
       some $\alpha\in \Z_p$ and
       hence lies in $\Strat^{\rs}(k\llparen t\rrparen)\subset
       \Strat(k\llparen t\rrparen)$, so $i^*E$ is trivial. We have proved that
       $P(\omega)$ is unipotent.

	For the reducedness of $P(\omega)$ one shows just like in
	\cite{DosSantos}
	that the relative Frobenius homomorphism 
	\begin{equation}\label{eq:relativeFrobenius}\pi_1^{\Strat}(k\llparen
	t\rrparen)\rightarrow \pi_1^{\Strat}(k\llparen
	t\rrparen)^{(1)}\end{equation} is an isomorphism. Indeed, if $F_{/k}$ denotes the
	relative Frobenius for $k\llparen t\rrparen$, then $F_{/k}^{*}$
	is an equivalence
	\[\Strat(k\llparen t\rrparen)\rightarrow \Strat(k\llparen
		t\rrparen^{(1)}).\]
		As in \emph{loc.~cit.~}this is translated
		into the fact that \eqref{eq:relativeFrobenius} is an
		isomorphism.
		
		The same is true for
		$\pi_1^{\rs}(k\llparen t\rrparen)$, as $F_{/k}^*$ restricts to
		an equivalence
		\[\Strat^{\rs}(k\llparen t\rrparen)\rightarrow
			\Strat^{\rs}(k\llparen
		t\rrparen^{(1)}).\]
	Thus the
	relative Frobenius $P(\omega)\rightarrow P(\omega)^{(1)}$ also is an
	isomorphism which implies that $P(\omega)$ is reduced.
\end{proof}

Next we prove Theorem \ref{thm:A1}.
\begin{proof}[Proof of Theorem \ref{thm:A1}]
	It follows from Theorem \ref{thm:equivalence} that the inclusion
	$k[t^{\pm 1}]\subset k\llparen t^{-1}\rrparen$
	induces an equivalence 
	\[\Strat^{\unip,\rsn}(\G_m)\xrightarrow{\cong}\Strat^{\unip}(k\llparen
		t^{-1}\rrparen).\]
	On the other hand, the inclusion $k[t]\subset k[t^{\pm 1}]$
	also induces an equivalence
	\begin{equation}\label{eq:A1Gm}\Strat^{\unip}(\A^1_k)\xrightarrow{\cong}
		\Strat^{\unip,\rsn}(\G_m).\end{equation}
	Indeed, if $E$ is a stratified bundle on $\G_m$ which is regular
	singular along $0$, then one assigns to it \emph{exponents along
	$0$}, which are elements of
	$\Z_p/\Z$, see \cite[Sec.~3]{Gieseker/FlatBundles},
	\cite{Kindler/FiniteBundles}. In fact, if $E$ is
	regular singular at $0$, then $E\otimes k\llparen
	t\rrparen\cong\bigoplus_{i=1}^{\rank E} \mathcal{O}_{k\llparen
	t\rrparen}(\alpha_i)$, with
	$\alpha_i\in \Z_p$, and the classes of $\alpha_i$ in $\Z_p/\Z$ are the
	exponents of $E$ along $0$. If $E$ is unipotent, $\alpha_i\equiv 0\mod
	\Z$. By \cite[Cor.~5.4]{Kindler/FiniteBundles} this implies that $E$
	extends to a stratified bundle $\overline{E}$ on $\A^1_k$.
	 The stratified bundle $\overline{E}$ is unipotent, as the restriction functor induces an equivalence
	$\left<\overline{E}\right>_{\otimes}\xrightarrow{\cong}
	\left<E\right>_{\otimes}$ by \cite[Lem.~2.5]{Kindler/FiniteBundles}.
	This means that \eqref{eq:A1Gm} is
	essentially surjective, but by \emph{loc.~cit.} it is also fully
	faithful. The proof is complete.
\end{proof}

Before we can prove Theorem \ref{thm:coproduct}, we briefly discuss the
coproduct of two affine $k$-group schemes.

\begin{definition}[\cite{Unver/PurelyIrregular}]
	Let $G_1, G_2$ be two affine $k$-group schemes.	Define the
	category $\mathcal{C}(G_1,G_2)$ as follows:
	Objects are triples $(\rho_1,\rho_2,V)$ with $V$ a
	finite dimensional $k$-vector space, and
	$\rho_i:G_i\rightarrow \GL(V)$, $i=1,2$, representations. A morphism
	$(\rho_1,\rho_2,V)\rightarrow(\rho'_1,\rho'_2,V')$ is a
	morphism of $k$-vector spaces $\phi:V\rightarrow V'$, such that  for
	every $k$-algebra $R$, and every $g_i\in G_i(R)$, the diagram
	\begin{equation*}
		\begin{tikzcd}[column sep=1.3cm]
			V\otimes_k R \dar[swap]{\phi\otimes \id}\rar{\rho_i(g_i)}& V\otimes_k R\dar{\phi\otimes \id}\\
			V'\otimes_k R \rar{\rho'_i(g_i)}& V'\otimes_k R
		\end{tikzcd}
	\end{equation*}
	commutes for $i=1,2$.
\end{definition}
The following facts are probably folklore, but the author was not able to find
a reference.
\begin{proposition}\label{prop:coproduct}Let $k$ be an algebraically closed
	field, and $G_1,G_2$, two affine $k$-group schemes.
	\begin{enumerate}
		\item The forgetful functor $\omega:\mathcal{C}(G_1,G_2)\rightarrow
			\Vectf_k$, $\omega(\rho_1,\rho_2,V)=V$, makes
			$\mathcal{C}(G_1,G_2)$ into a neutral tannakian category.
			We denote the corresponding affine $k$-group scheme by
			$G_1\ast^{\alg} G_2$.
		\item The forgetful functors $\tau_i:\mathcal{C}(G_1,G_2)\rightarrow \Repf_k
			G_i$, which are defined on objects by $\tau_i( (\rho_1,\rho_2,V))=\rho_i$, induce
			closed immersions $j_i:G_i\rightarrow G_1\ast^{\alg}
			G_2$, $i=1,2$, which make $G_1\ast^{\alg}G_2$ a
			coproduct in the category of affine $k$-group schemes.
		\item\label{item:generatedBy} Let $G$ be an affine $k$-group scheme and
			$j'_i:G_i\hookrightarrow G$ closed immersions. Assume
			that there exists no closed proper subgroup scheme of $G$,
			containing $j'_1(G_1)$ and $j'_2(G_2)$. Then the unique map
			$\gamma:G_1\ast^{\alg}G_2\rightarrow G$ induced by the $j'_i$ is
			faithfully flat.
		\item If $G_1$ and $G_2$ are reduced, then so is
			$G_1\ast^{\alg} G_2$. 
	\end{enumerate}
\end{proposition}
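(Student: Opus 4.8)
The plan is to show that $G:=G_1\ast^{\alg}G_2$ coincides with its own reduced subscheme. First I would recall that, since $k$ is algebraically closed and hence perfect, the reduced subscheme $G_{\red}$ of \emph{any} affine $k$-group scheme $G=\Spec A$ is again a closed $k$-subgroup scheme. Writing $\mathfrak n\subset A$ for the nilradical, the only nonformal point is that $\mathfrak n$ is a coideal; this holds because over a perfect field the tensor product of two reduced $k$-algebras is reduced, so the composite $A\xrightarrow{\Delta}A\otimes_k A\to A_{\red}\otimes_k A_{\red}$ annihilates $\mathfrak n$, while the kernel of $A\otimes_k A\to A_{\red}\otimes_k A_{\red}$ is exactly $\mathfrak n\otimes_k A+A\otimes_k\mathfrak n$. (This does not use that $A$ is of finite type, which is the point to watch, since $\mathcal O(G_1\ast^{\alg}G_2)$ is typically very large.) So it suffices to prove that $N:=G_{\red}$ equals $G$.

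Next, since each $G_i$ is reduced by hypothesis, the closed immersion $j_i\colon G_i\hookrightarrow G$ from Proposition~\ref{prop:coproduct}b) factors through the reduced closed subscheme $N$, giving closed immersions $\bar j_i\colon G_i\hookrightarrow N$ with $j_i(G_i)=\bar j_i(G_i)\subset N$. Thus $N$ is a closed subgroup scheme of $G$ containing both $j_1(G_1)$ and $j_2(G_2)$, and the proposition will be finished once I prove the general fact that $G=G_1\ast^{\alg}G_2$ admits \emph{no proper} closed subgroup scheme containing $j_1(G_1)$ and $j_2(G_2)$: applying this to $H=N$ then forces $N=G$.

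To prove that fact, let $H\subseteq G$ be a closed subgroup scheme containing $j_1(G_1)$ and $j_2(G_2)$, with inclusion $\iota\colon H\hookrightarrow G$. The corestricted homomorphisms $G_i\to H$ induce, by the coproduct property (Proposition~\ref{prop:coproduct}b)), a homomorphism $\delta\colon G\to H$; the composite $\iota\circ\delta\colon G\to G$ restricts to $j_i$ on each $G_i$, so by the uniqueness clause of the coproduct property it equals $\id_G$. Hence on coordinate rings $\iota^{*}\colon\mathcal O(G)\to\mathcal O(H)$ is surjective (as $\iota$ is a closed immersion) and admits the left inverse $\delta^{*}$, so it is an isomorphism; therefore $H=G$. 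Together with the previous paragraph this gives $N=G$, i.e.\ $G_1\ast^{\alg}G_2$ is reduced.

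The argument is entirely formal and I do not expect a genuine obstacle; the one place demanding care is precisely the lack of finiteness hypotheses on $G_1\ast^{\alg}G_2$, so the two inputs above — that $(-)_{\red}$ of an affine group scheme over a perfect field is a subgroup scheme, and that a closed immersion of affine schemes admitting a retraction is an isomorphism — must be verified directly on Hopf algebras rather than quoted in their finite-type forms; as indicated, both go through verbatim. (Alternatively, one could deduce reducedness from the relative-Frobenius criterion used in the proof of Theorem~\ref{thm:ses} and in \cite{DosSantos}, by checking that Frobenius pullback is an equivalence on $\mathcal C(G_1,G_2)$ as soon as it is on $\Repf_k G_1$ and $\Repf_k G_2$, but the direct argument above is shorter.)
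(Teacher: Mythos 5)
Your proposal only treats part d) of the proposition, taking a)--c) as given (you use the coproduct property from b) as an input); parts a)--c) are not addressed at all, so I can only assess d).

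For d) your argument is correct, and it takes a genuinely different route from the paper. The paper reduces to showing that every \emph{algebraic} quotient $G$ of $G_1\ast^{\alg}G_2$ is reduced: there the images of $G_1,G_2$ are reduced algebraic groups over an algebraically closed field, hence smooth, so the smallest closed subgroup containing them is reduced, and by c) that subgroup is all of $G$. You instead work directly with the non-algebraic coproduct: you check that $(-)_{\red}$ is a closed subgroup scheme over a perfect field even for Hopf algebras not of finite type (correctly identifying this as the point requiring care), note that the reduced $j_i(G_i)$ factor through $G_{\red}$, and prove the purely formal fact that the coproduct has no proper closed subgroup scheme containing both $j_i(G_i)$ --- corestrict, induce $\delta\colon G\to H$ by the universal property, get $\iota\circ\delta=\id_G$ from uniqueness, and conclude since a closed immersion admitting a retraction is an isomorphism. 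This avoids both the passage to algebraic quotients and the appeal to smoothness of the subgroup generated by smooth subgroups, and your intermediate ``no proper closed subgroup'' lemma is in fact the cleanest justification for the step where the paper itself invokes c). One caveat: the parenthetical alternative at the end does not work as stated, since reducedness of $G_i$ does not imply that Frobenius pullback is an equivalence on $\Repf_k G_i$ (e.g.\ $\G_a$ is reduced but its relative Frobenius is not an isomorphism); this aside is not load-bearing, but should be dropped or corrected.
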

\begin{proof}
	\begin{enumerate}[label={\alph*)},ref={\alph*)}]
		\item This is clear.
		\item The $\tau_i$ are $\otimes$-functors, and if
			$\omega_i:\Repf_k G_i\rightarrow \Vectf_k$ is the
			forgetful functor, then $\omega=\omega_i\tau_i$, so
			$\tau_i$ induces a morphism of group schemes
			$j_i:G_i\rightarrow G_1\ast^{\alg} G_2$. The $\tau_i$
			admit sections: The functor $\sigma_i:\Repf_k G_i\rightarrow
			\mathcal{C}(G_1,G_2)$, assigning to a representation
			$\rho_1:G_1\rightarrow \GL(V)$ the triple
			$(\rho_1,G_2\xrightarrow{\triv} \GL(V),V)$, where
			$\triv$ means the trivial representation of $G_2$ on
			$V$, has the property that $\tau_1\sigma_1$ is the
			identity functor on $\Repf_k G_1$. Analogously we
			define $\sigma_2$. This shows that $\tau_i$ is
			essentially surjective, and in particular that the
			corresponding morphism of group schemes
			$j_i:G_i\rightarrow G_{1}\ast^{\alg}G_2$ is a closed
			immersion for $i=1,2$.

			To see that $G_1\ast^{\alg}G_2$ together with
			$j_1,j_2$ is a coproduct in the category of affine
			$k$-group schemes, let $G$ be an affine $k$-group scheme and
			$j'_i:G_i\rightarrow G$ morphisms. Given a
			representation $\rho:G\rightarrow \GL(V)$, we assign
			to it the triple $(j'_1\rho,j'_2\rho,V)$. Clearly this
			extends to a functor $\Repf_k G\rightarrow
			\mathcal{C}(G_1,G_2)$, such that $\Repf_k G\rightarrow
			\mathcal{C}(G_1,G_2)\xrightarrow{\omega} \Vectf_k$ is
			the forgetful functor. Hence we obtain a morphism of
			group schemes $\gamma:G_1\ast^{\alg}G_2\rightarrow G$, such
			that the diagram
			\begin{equation*}
				\begin{tikzcd}
					G_1\ar{dr}{j'_1}\dar[swap]{j_1}\\
					G_1\ast^{\alg}G_2\rar& G\\
					G_2\uar{j_2}\ar[swap]{ur}{j'_2}
				\end{tikzcd}
				 \end{equation*}
			 commutes. Moreover, $\gamma$ is unique with this
			 property, as the functor $\Repf_k G \rightarrow
			 \mathcal{C}(G_1,G_2)$ is uniquely determined by
			 $j_1,j_2,j'_1,j'_2$.
		 \item To prove that $\gamma:G_1\ast^{\alg}G_2\rightarrow G$ is
			 faithfully flat, it suffices to show that for
			 every representation $\rho$ of $G$, the functor
			 \[\Repf_k G\xrightarrow{\gamma^*} \mathcal{C}(G_1,G_2)\]
			 restricts to an equivalence
			 \[\left<\rho\right>_{\otimes}\xrightarrow{\cong}
				 \left<\gamma\rho\right>_{\otimes}.\]
			Let $G(\rho)$ be the affine group scheme corresponding
			to the Tannaka category $\left<\rho\right>_{\otimes}$
			and the forgetful functor
			$\left<\rho\right>_{\otimes}\subset \Repf_k
			G\rightarrow \Vectf_k$. We obtain a faithfully flat
			map $G\twoheadrightarrow G(\rho)$. It
			suffices to show that the composition $G_1\ast^{\alg} G_2\rightarrow
			G\rightarrow G(\rho)$ is faithfully flat. If $H$
			denotes the image of $G_1\ast^{\alg}G_2$ in $G(\rho)$,
			then $H\times_{G(\rho)}
			G\rightarrow G$ is a closed affine subgroup scheme
			containing the image of $G_1\ast^{\alg}G_2$, so   by
			assumption $H\times_{G(\rho)} G\rightarrow G$ is an
			isomorphism. By faithfully flat descent $H=G(\rho)$.
			Since $G_1\ast^{\alg}G_2\rightarrow H=G(\rho)$ is
			faithfully flat we are done.
		\item We have to show that every algebraic quotient $G$ of
			$G_1\ast^{\alg}G_2$ is reduced.
			Let $H$ be
			the smallest closed subgroup scheme of $G$ containing
			the images of $G_1$ and $G_2$, which are smooth by
			assumption. Hence $H$ is reduced. But by
			\ref{item:generatedBy}
		we know that $H=G$.
	\end{enumerate}
\end{proof}
\begin{definition}\label{defn:coproduct}
	If $G_1,G_2$ are $k$-group schemes, we write $G_1\ast^{\unip}G_2$ for
	the maximal unipotent quotient of $G_1\ast^{\alg}G_2$. We have
	\[\Repf_{k}\left(G_1\ast^{\unip} G_2\right)=\mathcal{C}^{\unip}(G_1,G_2)\subset
		\mathcal{C}(G_1,G_2),\] the full subcategory of triples
	$(\rho_1,\rho_2,V)$, such that the closed subgroup generated by the
	images of $\rho_1,\rho_2$ is unipotent.
\end{definition}

We will also need the following easy fact.
\begin{lemma}\label{lemma:normalizer}Let $G$ be a smooth unipotent algebraic group over $k$,
	and $H\subsetneqq G$ a proper closed subgroup. Then there
	exists a normal proper subgroup $H'\lhd G$ containing $H$.
\end{lemma}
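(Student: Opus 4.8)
The plan is to reduce the claim to a Nakayama-type fact for nilpotent groups and then take $H':=H\cdot[G,G]$. First, $H'$ is a closed subgroup of $G$ containing $H$: writing $q\colon G\twoheadrightarrow G/[G,G]$ for the quotient morphism, $H'$ is the preimage $q^{-1}\bigl(q(H)\bigr)$ of the closed subgroup $q(H)\subseteq G/[G,G]$ (equivalently, it is the image of the multiplication morphism $H\times[G,G]\to G$, a constructible subgroup, hence closed). Since $G/[G,G]$ is commutative, $q(H)$ is normal in it, so $H'=q^{-1}(q(H))$ is normal in $G$. It therefore remains to see that $H'\ne G$; this will follow from the assertion that \emph{if $L\subseteq G$ is a closed subgroup of a nilpotent group $G$ with $L\cdot[G,G]=G$, then $L=G$}, applied to $L=H$: were $H'=G$, this would force $H=G$, contrary to the hypothesis that $H$ is proper.

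Next I would check that $G$ is nilpotent and prove the displayed assertion. Being unipotent, $G$ embeds as a closed subgroup scheme into the group $U_n$ of strictly upper-triangular $n\times n$-matrices for some $n$; since $U_n$ is nilpotent and $\gamma_i(L')\subseteq\gamma_i(L'')$ for a closed subgroup $L'\subseteq L''$ (functoriality of the lower central series), $G$ is nilpotent. Now argue by induction on the nilpotency class $c$ of $G$. If $c\le 1$ then $G$ is commutative, $[G,G]=1$, and $L\cdot[G,G]=L=G$ is immediate. If $c\ge 2$, set $Z:=\gamma_c(G)$; it is a nontrivial closed subgroup, central in $G$ because $[G,Z]=\gamma_{c+1}(G)=1$, and $G/Z$ is nilpotent of class $\le c-1$. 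The image of $L$ in $G/Z$, multiplied by $[G/Z,G/Z]$, is the image of $L\cdot[G,G]=G$ (note $Z\subseteq[G,G]$ since $c\ge2$), hence equals $G/Z$; by the inductive hypothesis the image of $L$ in $G/Z$ is all of $G/Z$, i.e.\ $L\cdot Z=G$. Since $Z$ is central, every commutator of two elements of $G=L\cdot Z$ is already a commutator of two elements of $L$, so $[G,G]=[L,L]\subseteq L$, and therefore $G=L\cdot[G,G]=L$, completing the induction.

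There is no serious obstacle here; the content of the argument is the short induction above, and the rest is scheme-theoretic bookkeeping: that $H\cdot[G,G]$ is genuinely a closed subgroup scheme, that $\gamma_i(G/Z)=\gamma_i(G)/Z$ for $i\le c$, and that centrality of $Z$ lets one identify the image of the commutator morphism of $G$ restricted to $(L\cdot Z)\times(L\cdot Z)$ with its image restricted to $L\times L$ — none of which causes real difficulty. The one point I would emphasize is that one should resist the temptation to argue via a maximal proper subgroup of $G$: closed subgroup schemes of $G$ need not satisfy the ascending chain condition (for instance $\F_p\subsetneqq\F_{p^2}\subsetneqq\F_{p^4}\subsetneqq\cdots$ inside $\G_a$), so maximal proper subgroups need not exist; passing instead to the commutative quotient $G/[G,G]$ and invoking nilpotency is what makes the argument go through.
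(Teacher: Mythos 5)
Your proof is correct, but it takes a genuinely different route from the paper's. The paper argues by induction on $\dim G$: for $\dim G=0$ it uses that $G$ is the constant group scheme of a finite $p$-group, where $H$ sits inside an index-$p$ (hence normal) subgroup because normalizers grow; for $\dim G>0$ it quotes that the center $Z$ of a positive-dimensional nilpotent group has positive dimension, passes to $G/Z$, and pulls back. You instead exhibit a canonical candidate, $H':=H\cdot[G,G]$, which is normal for free as the preimage of a subgroup of the abelianization, and reduce properness to the Frattini-type statement that $L\cdot[G,G]=G$ forces $L=G$ in a nilpotent group, proved by induction on the nilpotency class via the last term of the lower central series. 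Your version buys a uniform argument with no case split on dimension, no appeal to the positive-dimensionality of the center, and a slightly stronger conclusion ($G/H'$ is abelian); the paper's version is shorter given the reference it cites. Both ultimately rest on the nilpotency of unipotent groups, and your closing remark that one cannot simply take a maximal proper subgroup (ACC fails for closed subgroup schemes of $\G_a$) correctly identifies the pitfall that makes some argument of this kind necessary. The deferred bookkeeping is genuinely harmless here: since $G$ is smooth over $k=\barr{k}$, the derived group $[G,G]$ is a closed (smooth) subgroup even for disconnected $G$, so $H'$ is closed as the preimage of the closed image of $H$ in $G/[G,G]$; and the one step where non-smoothness of $H$ could intervene, namely deducing $[G,G]\subset L$ from $G=LZ$ with $Z$ central, survives by running the commutator computation on points after an fppf cover splitting $G\rightarrow G/Z$ and descending along the closed immersion $L\subset G$ (in the paper's application $H$ is smooth anyway, so even the naive argument on $k$-points suffices).
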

\begin{proof}
	If $\dim G=0$, then $G$ is the constant $k$-group scheme attached to a
	finite $p$-group because $k$ is algebraically closed, and $H$
	corresponds to an abstract subgroup. In this situation it is not difficult to check
	that $H\subsetneqq G$ is contained in a subgroup
	$H'\subsetneqq G$ of index $p$. But in a finite $p$-group one also has
	$H'\subsetneqq N_G(H')$, so $H'$ is normal in $G$.

	If $\dim G>0$, and if $Z$ is the center of $G$, then $\dim Z>0$
	by \cite[Ch.~17, Ex.~5]{Humphreys/LinearAlgebraicGroups}. Thus $\dim
	G/Z<\dim G$, and $H/(Z\cap H)\subsetneqq G/Z$. By induction there
	exists a proper normal subgroup $\overline{H'}\subsetneqq G/Z$  containing
	$H/(Z\cap H)$. If $H'\subsetneqq G$ is the preimage of $\overline{H'}$, then
	$H\subset H'$ and $H'$ is normal in $G$.
\end{proof}
%
%

We are now ready to prove Theorem \ref{thm:coproduct}.
\begin{proof}[Proof of Theorem \ref{thm:coproduct}]
	Let $E$ be a unipotent stratified bundle on $\G_m$, and write $G(E,x)$
	for associated  monodromy group, which is a quotient of
	$\pi_1^{\unip}(\G_m,x)$. Note that $G(E,x)$ is smooth by
	\cite{DosSantos}. Let $H$ denote
	the image of $\pi_1^{\unip}(k\llparen t\rrparen,
	\omega_x)\ast^{\unip}\pi_1^{\unip}(k\llparen t^{-1}\rrparen, \omega_x)$ in
	$G(E,x)$, which is also smooth by Proposition \ref{prop:coproduct}.
	Since $G(E,x)$ is unipotent, if $H\neq
	G(E,x)$, then by Lemma \ref{lemma:normalizer} there exists a proper
	normal subgroup $H'\lhd G(E,x)$
	containing $H$.
	In
	other words: There exists a stratified bundle $E'$ on $\G_m$, with monodromy
	group $G(E,x)/H'\neq \{1\}$, which is
	trivial on $k\llparen t\rrparen$ and $k\llparen t^{-1}\rrparen$. This
	is impossible. Thus $H=G(E,x)$, and it follows from Proposition
	\ref{prop:coproduct}, \ref{item:generatedBy} that the map
	\[ \pi_1^{\unip}(k\llparen
		t\rrparen,\omega_x)\ast^{\unip}\pi_1^{\unip}(k\llparen
		t^{-1}\rrparen,\omega_x)\rightarrow\pi_1^{\unip}(\G_m,\omega_x)
		\]
	is faithfully flat.
	To show that it is an isomorphism, it thus remains to show that the
	functor \[\Strat^{\unip}(\G_m)\rightarrow
	\mathcal{C}^{\unip}(\pi_1^{\unip}(k\llparen
	t\rrparen,\omega_x),\pi_1^{\unip}(k\llparen
	t^{-1}\rrparen,\omega_x)),\] given by $E\mapsto
	(E|_{k\llparen t\rrparen}, E|_{k\llparen t^{-1}\rrparen}, E|_x)$ is
	essentially surjective. But this is a consequence of the following
	lemma.
\end{proof}
\begin{lemma}
	Consider the two
	maps
	\[
	\begin{tikzcd}[column sep=.7cm]
			\Rlim_nU_r(k\llparen t\rrparen^{p^n})&\ar[swap,midway]{l}{\phi^+_r}\Rlim_n
		U_r(k[t^{\pm 1}]^{p^n})\ar{r}{\phi^-_r}& \Rlim_n U_r(k\llparen
        	t^{-1}\rrparen^{p^n})
	\end{tikzcd}
	\]
	induced by the inclusions $k[t^{\pm 1}]\hookrightarrow k\llparen
	t\rrparen$, $k[t^{\pm 1}]\hookrightarrow k\llparen t^{-1} \rrparen$
	(see Convention \ref{conv:inclusion}).
	The map of pointed sets
	\[
	\maxsizebox{\textwidth}{!}{
		\begin{tikzcd}[column sep=.3cm]
			(\phi_r^+,\phi_r^-):\Rlim_n U_r(k[t^{\pm
			1}]^{p^n})\ar{r}& \Rlim_nU_r(k\llparen t\rrparen^{p^n}) \times \Rlim_nU_r(k\llparen t^{-1}\rrparen^{p^n})
		\end{tikzcd}
	}
\]
	is surjective.
\end{lemma}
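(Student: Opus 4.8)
The plan is to prove the statement by induction on $r$, reducing the general case to $U_2=\G_a$ by means of the normal filtration $0=G_0\subset G_1\subset\cdots\subset G_{r-1}\subset U_r$ introduced in the proof of Lemma \ref{lemma:Gis} (so that $G_i/G_{i-1}\cong\G_a$ is central in $U_r/G_{i-1}$ and $U_r/G_{r-1}\cong U_{r-1}$), combined with the weak Snake lemma, Lemma \ref{lemma:snake}, \ref{item:snake2}. For a quotient $H$ of $U_r$ by one of the $G_i$, or for $H=G_i/G_{i-1}$, I write $\Phi_H$ for the map
\[(\phi_H^+,\phi_H^-):\Rlim_n H(k[t^{\pm1}]^{p^n})\to\Rlim_n H(k\llparen t\rrparen^{p^n})\times\Rlim_n H(k\llparen t^{-1}\rrparen^{p^n})\]
induced by the two inclusions of Convention \ref{conv:inclusion}; the goal is to show $\Phi_{U_r}$ is surjective.

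For the base case: $r=1$ is trivial, and for $r=2$, i.e.\ $U_2=\G_a$, I would invoke Lemma \ref{lemma:twistedRank2} with the trivial twist (the case $c=0$, for which one may take the representative $\gamma$ trivial). It gives that $\phi_2^+$ restricts to a bijection $\ker(\phi_2^-)\xrightarrow{\cong}\Rlim_n U_2(k\llparen t\rrparen^{p^n})$, and by the symmetry exchanging $0$ and $\infty$ (which interchanges $k\llparen t\rrparen$ and $k\llparen t^{-1}\rrparen$) also $\phi_2^-$ restricts to a bijection $\ker(\phi_2^+)\xrightarrow{\cong}\Rlim_n U_2(k\llparen t^{-1}\rrparen^{p^n})$. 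Since $\G_a$ is abelian, all the sets $\Rlim_n\G_a(R^{p^n})$ are abelian groups and $\phi_2^\pm$ are homomorphisms; so given $(a^+,a^-)$ one picks $b^+\in\ker(\phi_2^-)$ with $\phi_2^+(b^+)=a^+$ and $b^-\in\ker(\phi_2^+)$ with $\phi_2^-(b^-)=a^-$, and then $b^++b^-\mapsto(a^+,a^-)$. (Alternatively this follows directly from Lemma \ref{lemma:triviality}: a class over $k\llparen t\rrparen$ has a representative $(a_n)$ satisfying \eqref{eq:supportcondition}, and writing $a_n=a_n^{<0}+a_n^{\geq0}$ the tail $(a_n^{\geq0})$ becomes trivial over $k\llparen t\rrparen$ while $(a_n^{<0})\in t^{-1}k[t^{-1}]$ becomes trivial over $k\llparen t^{-1}\rrparen$, and symmetrically.)

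For the inductive step, fix $r\geq3$ and assume the statement in rank $r-1$. For each ring $R\in\{k[t^{\pm1}],k\llparen t\rrparen,k\llparen t^{-1}\rrparen\}$ and each $i$ with $1\leq i\leq r-1$, the levelwise short exact sequence of projective systems
\[1\to\{(G_i/G_{i-1})(R^{p^n})\}_n\to\{(U_r/G_{i-1})(R^{p^n})\}_n\to\{(U_r/G_i)(R^{p^n})\}_n\to1\]
is right exact because the kernel $G_i/G_{i-1}\cong\G_a$ has $H^1(A,\G_a)=0$ for every $k$-algebra $A$; and, since $\bigcap_nR^{p^n}=k$, its inverse limit is the corresponding short exact sequence of $k$-points, which is again right exact. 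As $G_i/G_{i-1}$ is central, Lemma \ref{lemma:Rlim}, \ref{item:Rlim2} and \ref{item:Rlim3}, then yield a \emph{strongly exact} (Definition \ref{defn:strongexactness}) short exact sequence of pointed sets in $\Rlim$. The two inclusions $k[t^{\pm1}]\hookrightarrow k\llparen t\rrparen$ and $k[t^{\pm1}]\hookrightarrow k\llparen t^{-1}\rrparen$ furnish a morphism of strongly exact sequences from the sequence over $k[t^{\pm1}]$ to the product (again strongly exact) of the sequences over $k\llparen t\rrparen$ and $k\llparen t^{-1}\rrparen$, with left, middle and right vertical maps $\Phi_{G_i/G_{i-1}}=\Phi_{\G_a}$, $\Phi_{U_r/G_{i-1}}$ and $\Phi_{U_r/G_i}$. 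The left map is surjective by the base case. Starting from $i=r-1$, where $U_r/G_{r-1}=U_{r-1}$ so that $\Phi_{U_r/G_{r-1}}$ is surjective by the rank $r-1$ hypothesis, and running a descending induction on $i$, Lemma \ref{lemma:snake}, \ref{item:snake2}, shows at each step that $\Phi_{U_r/G_{i-1}}$ is surjective; for $i=1$ this is $\Phi_{U_r/G_0}=\Phi_{U_r}$, which finishes the induction.

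I expect the main obstacle to be the bookkeeping that makes the Snake-lemma step legitimate: checking that a product of strongly exact sequences is again strongly exact, that the transition maps $R^{p^{n+1}}\hookrightarrow R^{p^n}$ collapse the relevant inverse limits onto $k$-points so that right exactness is preserved, and that the restriction maps genuinely form a \emph{morphism} of strongly exact sequences in the sense of Definition \ref{defn:strongexactness}. Conceptually, one is forced through the filtration rather than arguing in one stroke because for $r>2$ the set $\Rlim_n U_r(R^{p^n})$ carries no group structure, so one cannot simply add a preimage of $(a^+,0)$ to a preimage of $(0,a^-)$; the abelian $\G_a$-subquotients are exactly where that addition is available, and the Snake lemma propagates the resulting surjectivity up the filtration.
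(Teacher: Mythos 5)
Your proof is correct and follows essentially the same route as the paper: the rank-$2$ case via the decomposition of representatives into nonnegative and negative parts (Lemma \ref{lemma:triviality}), followed by a descending induction along the central filtration $G_i\subset U_r$ using the strongly exact sequences and the surjectivity half of Lemma \ref{lemma:snake}. The only cosmetic difference is that the paper carries out the orbit-correction chase of Lemma \ref{lemma:snake}, \ref{item:snake2} by hand (choosing a lift $c$ of $\hat{\bar c}$ and correcting by an element $g$ of the central $\G_a$-piece) rather than formally invoking the snake lemma against the product sequence, and it uses the Lemma \ref{lemma:triviality} argument for the base case rather than your Lemma \ref{lemma:twistedRank2}-plus-symmetry variant, which you in any case also mention.
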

\begin{proof}
	We first prove the statement for $r=2$ and then induct on $r$.

	Let $a\in \Rlim_n \G_a(k\llparen t\rrparen^{p^n})$ and $b\in \Rlim_n
	\G_a(k\llparen t^{-1}\rrparen^{p^n})$ be classes represented by
	$(a_n)_n\in \prod_{n\geq 0}\G_a(k\llparen t\rrparen^{p^n})$ and
	$(b_n)_n\in
	\prod_{n\geq 0} \G_a(k\llparen t^{-1}\rrparen^{p^n})$. By Lemma
	\ref{lemma:triviality} we know that the classes of $(a_n^{\geq 0})_n$ and
	$(b_n^{<0})_n$ are trivial, where $a_n^{\geq 0}:=a_n^{(\Z_{\geq 0})}$
	and $a_n^{<0}:=a_n^{(\Z_{<0})}$ in the notation from Definition
	\ref{defn:partition}, and similarly for $b_n$. Thus we may assume that $a_n\in k[t^{-1}]^{p^n}$,
	and $b_n\in k[t]^{p^n}$ for every $n\geq 1$. Then $a_n+b_n\in
	k[t^{\pm 1}]^{p^n}$, and $\phi^+_2( [(a_n+b_n)_n])=a$,
	$\phi^-_2([(a_n+b_n)_n])=b$, so $(\phi_2^+,\phi_2^-)$ is surjective.

	Now let $r>2$, and assume that the lemma is proved for all ranks $<r$.
	Again let $G_i\lhd U_r$ denote the normal
	subgroups given by matrices of the form
\[
\left(
\begin{array}{c|c}
	\raisebox{-40pt}{{\mbox{{$\id$}}}}
	&a_1 \\[-35pt]
	&\vdots \\
	&a_{i}\\
	&0\\
	&\vdots\\
	&0\\\hline
	0\;\cdots\; 0 & 1
\end{array}
\right).
\]
As in
the proof of Lemma \ref{lemma:Gis} we obtain morphisms of 
strongly exact (Definition \ref{defn:strongexactness})
	short exact sequence of pointed sets
	\begin{equation*}
		\maxsizebox{\textwidth}{!}{
			\begin{tikzcd}[column sep=.3cm,inner sep=.01cm] 
			1\rar&\Rlim_n (G_{r-1}/G_{r-2})(k\llparen t\rrparen^{p^n})\rar&\Rlim_n (U_r/G_{r-2})(k\llparen t \rrparen^{p^n})\rar&\Rlim_n U_{r-1}(k\llparen t\rrparen^{p^n})\rar&1\\
			1\rar&\Rlim_n (G_{r-1}/G_{r-2})(k[t^{\pm 1}]^{p^n})\rar\uar\dar&\Rlim_n (U_r/G_{r-2})(k[t^{\pm 1}]^{p^n})\rar\uar{\phi^+_r}\dar[swap]{\phi^-_r}&\Rlim_n U_{r-1}(k[t^{\pm 1}]^{p^n})\rar\uar{\phi^+_{r-1}}\dar[swap]{\phi^{-}_{r-1}}&1\\
			1\rar&\Rlim_n (G_{r-1}/G_{r-2})(k\llparen t^{-1}\rrparen^{p^n})\rar&\Rlim_n (U_r/G_{r-2})(k\llparen t^{-1} \rrparen^{p^n})\rar&\Rlim_n U_{r-1}(k\llparen t^{-1}\rrparen^{p^n})\rar&1
		\end{tikzcd}
		}
		 \end{equation*}
	 We sloppily also write $\phi_r^+, \phi_r^-$ for the middle vertical maps in this diagram. 

	 Let $c_+\in \Rlim_n (U_r/G_{r-2})(k\llparen t\rrparen^{p^n})$ and $c_-\in
	 \Rlim_n (U_r/G_{r-2})(k\llparen t^{-1}\rrparen^{p^n})$, and write $\bar{c}_+$
	 (resp.~$\bar{c}_-$) for the image of $c_+$ in $\Rlim_n U_{r-1}(k\llparen
	t\rrparen^{p^n})$ (resp.~for the image of $c_-$ in $\Rlim_nU_{r-1}(k\llparen
	t^{-1}\rrparen^{p^n})$). By induction hypothesis, there exists a
	class
	$\hat{\bar{c}}\in \Rlim_n U_{r-1}(k[t^{\pm 1}]^{p^n})$, such that
	$(\phi_{r-1}^+,\phi_{r-1}^{-})(\hat{\bar{c}})=(\bar{c}_+,\bar{c}_-)$.

	Recall that $G_{r-1}/G_{r-2}\cong \G_a$ and that Lemma
	\ref{lemma:Rlim} shows that the group \[\Rlim_n
		(G_{r-1}/G_{r-2})(k\llparen t\rrparen^{p^n})\] acts on $\Rlim_n
	(U_r/G_{r-2})(k\llparen t\rrparen^{p^n})$, such that the orbits are
	precisely the fibers of the projection to $\Rlim_n U_{r-1}(k\llparen
	t\rrparen^{p^n})$, and similarly for $k[t^{\pm 1}]$ and $k\llparen
	t^{-1}\rrparen$. 

	If $c\in \Rlim_n (U_r/G_{r-2})(k[t^{\pm 1}]^{p^n})$ is any lift of
	$\hat{\bar{c}}$, then $\phi_r^+({c}),c_+$ both map to
	$\bar{c}_+$. Similarly, $\phi_r^{-}(c)$ and $c_-$ both map to
	$\bar{c}_{-}$.
	This means there exist elements 
\[g_+\in \Rlim_n (G_{r-1}/G_{r-2})(k\llparen t\rrparen^{p^n}), g_-\in
	\Rlim_n (G_{r-1}/G_{r-2})(k\llparen t^{-1}\rrparen^{p^n}),\]
	such that $g_{+}+c_+=\phi_r^{+}(c)$ and $g_-+c_-=\phi_r^{-}(c)$.

	Since $G_{r-1}/G_{r-2}\cong U_2$, by induction there exists a 
	class \[g\in \Rlim_n (G_{r-1}/G_{r-2})(k[t^{\pm 1}]),\]
	mapping to $g_-$
	and $g_+$.
	Since $\phi^+_r$ and $\phi_r^-$ are equivariant with respect to the
	action of $\Rlim_n (G_{r-1}/G_{r-2})(k[t^{\pm 1}]^{p^n})$, it follows
	that $\phi^+_r(c-g)=c_+$ and $\phi^-_r(c-g)=c_-$. 
	

	We have proved that the map
	\[
		\maxsizebox{\textwidth}{!}{
			\begin{tikzcd}[column sep=1cm]
			\Rlim_n (U_r/G_{r-2})(k[t^{\pm
			1}])\rar{(\phi_r^+,\phi_r^-)}&
		\Rlim_n(U_r/G_{r-2})(k\llparen t\rrparen^{p^n})
		\times \Rlim_n(U_r/G_{r-2})(k\llparen t^{-1}\rrparen^{p^n})
	\end{tikzcd}}
		\]
	is surjective.

	Now we apply the above argument inductively to the short exact sequences
	\[ \begin{tikzcd}1\rar& G_{r-i}/G_{r-i-1}\rar& U_{r}/G_{r-i-1}\rar&
			U_{r}/G_{r-i}\rar& 1.\end{tikzcd}\]
	Eventually we arrive at
	\[\begin{tikzcd}1\rar& G_1\rar& U_r\rar& U_{r}/G_1\rar& 1\end{tikzcd}\]
	which completes the proof.
\end{proof}

Finally, we show that one can recover from Theorem \ref{thm:coproduct} the
result of Katz-Gabber (\cite[bottom of p.~98]{Katz/LocalToGlobal}) stating that the morphism
\[\pi^{\et}_1(k\llparen t\rrparen)^{(p)}\ast_{(p)}\pi_1^{\et}(k\llparen
	t^{-1}\rrparen)^{(p)}\rightarrow\pi_1^{\et}(\G_m)^{(p)}\]
induced by compatible choices of base points, is an isomorphism. Here
$(-)^{(p)}$ denotes the maximal pro-$p$-quotient, and $\ast_{(p)}$ the coproduct
in the category of pro-$p$-groups.  For the construction of the coproduct in the
category of pro-$p$-groups, see \cite[Sec.~9.1]{Ribes/2000}.

\begin{proposition}\label{prop:completion}
	Recall that $k$ is an algebraically closed field of positive
	characteristic $p$.
	Let $G$, $G_1, G_2$ be affine, reduced, unipotent $k$-group schemes.
	\begin{enumerate}
		\item The profinite completion $\widehat{G}$ of $G$ is
			a pro-$p$-group. By \emph{profinite completion} we mean the inverse
	 limit over all finite (hence constant) quotients of $G$.
 \item The profinite completion of $G_1\ast^{\unip}G_2$ is the pro-constant
	 group scheme associated with the coproduct
	 $\widehat{G}_1\ast_{(p)}
	\widehat{G}_2$ in the category of pro-$p$-groups.
	 \end{enumerate}
\end{proposition}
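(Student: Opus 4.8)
The plan is to prove a) directly and then deduce b) by comparing the universal properties of the two coproducts, using a) to remain inside the category of pro-$p$-groups. \textbf{For a):} since $G$ is reduced and $k$ is algebraically closed, $G$ is smooth, so (as already recorded in the convention above) every finite quotient of $G$ is a finite constant group scheme $\underline{\Gamma}$; being a quotient of the unipotent group $G$, it is unipotent. I would then argue that this forces $\Gamma$ to be a $p$-group: if a prime $\ell\neq p$ divided $|\Gamma|$, then $\underline{\Gamma}$ would contain the constant subgroup scheme $\underline{\Z/\ell}$, which would then be unipotent; but over $k$, which contains the $\ell$-th roots of unity, there is a nontrivial homomorphism $\underline{\Z/\ell}\to\G_m$, impossible for a unipotent group scheme. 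Hence every finite quotient of $G$ is a finite $p$-group and $\widehat{G}$ is a pro-$p$-group.

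\textbf{For b):} first I would note that $G_1\ast^{\unip}G_2$ is itself reduced and unipotent, so that a) applies to it: by Proposition \ref{prop:coproduct} d) the group scheme $G_1\ast^{\alg}G_2$ is reduced, hence smooth, and $G_1\ast^{\unip}G_2$ is a quotient of it and thus also smooth, while unipotence holds by definition. Consequently both $\widehat{G_1\ast^{\unip}G_2}$ and $\widehat{G_1}\ast_{(p)}\widehat{G_2}$ are pro-$p$-groups, and it suffices to produce a natural isomorphism of the functors they represent on the category of finite $p$-groups, compatibly with the canonical maps from $\widehat{G_1}$ and $\widehat{G_2}$.

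Fix a finite $p$-group $\Gamma$; then $\underline{\Gamma}$ is a finite unipotent constant $k$-group scheme. For a reduced affine $k$-group scheme $H$, a homomorphism $H\to\underline{\Gamma}$ has finite image and hence factors through a finite quotient of $H$, so $\Hom_{k\text{-grp}}(H,\underline{\Gamma})=\Hom_{\mathrm{cts}}(\widehat{H},\Gamma)$; I would apply this with $H=G_1$, $G_2$ and $G_1\ast^{\unip}G_2$. Because $\underline{\Gamma}$ is unipotent, every homomorphism $G_1\ast^{\alg}G_2\to\underline{\Gamma}$ has unipotent image and therefore factors uniquely through the maximal unipotent quotient $G_1\ast^{\unip}G_2$, so $\Hom(G_1\ast^{\unip}G_2,\underline{\Gamma})=\Hom(G_1\ast^{\alg}G_2,\underline{\Gamma})$, which by the coproduct property of $G_1\ast^{\alg}G_2$ (Proposition \ref{prop:coproduct} b)) equals $\Hom(G_1,\underline{\Gamma})\times\Hom(G_2,\underline{\Gamma})$. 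On the other side, the defining property of $\widehat{G_1}\ast_{(p)}\widehat{G_2}$ gives $\Hom_{\mathrm{cts}}(\widehat{G_1}\ast_{(p)}\widehat{G_2},\Gamma)=\Hom_{\mathrm{cts}}(\widehat{G_1},\Gamma)\times\Hom_{\mathrm{cts}}(\widehat{G_2},\Gamma)$. Concatenating these identifications yields
\[
\Hom_{\mathrm{cts}}(\widehat{G_1\ast^{\unip}G_2},\Gamma)\;\cong\;\Hom_{\mathrm{cts}}(\widehat{G_1}\ast_{(p)}\widehat{G_2},\Gamma)
\]
naturally in $\Gamma$ and compatibly with the maps out of $\widehat{G_1}$ and $\widehat{G_2}$. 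Since a pro-$p$-group is the inverse limit of its finite $p$-group quotients it is determined by the functor it represents on finite $p$-groups (Yoneda in the pro-category of finite $p$-groups), so the displayed isomorphism comes from a unique isomorphism $\widehat{G_1\ast^{\unip}G_2}\xrightarrow{\ \cong\ }\widehat{G_1}\ast_{(p)}\widehat{G_2}$, equivalently of the associated pro-constant group schemes.

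The argument is essentially formal. The steps deserving care are the middle identification — that a homomorphism into a finite \emph{unipotent} group scheme factors through the maximal unipotent quotient — and the closing appeal to Yoneda, namely that testing against finite $p$-groups loses no information about a pro-$p$-group; I do not expect any genuine obstacle beyond arranging these standard facts in the right order.
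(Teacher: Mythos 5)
Your proof is correct, but it runs along a genuinely different track from the paper's. For a), the paper simply observes that a finite quotient of a unipotent group is a finite group of unipotent matrices, hence a $p$-group (a unipotent matrix of finite order in characteristic $p$ has $p$-power order); your character-theoretic argument via $\Hom(\underline{\Z/\ell},\G_m)$ is a valid alternative. For b), the paper stays entirely on the Tannakian side: it identifies the profinite completion of $G_1\ast^{\unip}G_2$ with the group scheme of the full subcategory $\mathcal{C}^{\text{finite}}(G_1,G_2)\subset\mathcal{C}^{\unip}(G_1,G_2)$ of triples with finite monodromy, and then checks that the evident functor $\mathcal{C}^{\text{finite}}(G_1,G_2)\rightarrow\Repf_k^{\cts}(\widehat{G}_1\ast_{(p)}\widehat{G}_2)$ is fully faithful and essentially surjective. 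You instead work on the group side, comparing the $\Hom$-functors into finite $p$-groups $\underline{\Gamma}$ via the chain of universal properties ($\ast^{\alg}$ as coproduct, the maximal unipotent quotient, $\ast_{(p)}$) and closing with Yoneda in the pro-category. The two arguments are Tannaka-dual to one another; yours is more formal and avoids checking essential surjectivity of a functor between representation categories, at the cost of needing the universal property of the maximal unipotent quotient (that any homomorphism to a unipotent group scheme factors through it) and the fact that $\Hom_{k\text{-grp}}(H,\underline{\Gamma})=\Hom_{\cts}(\widehat{H},\Gamma)$ for reduced $H$ -- both standard, and both correctly flagged and justified in your write-up. The paper's version has the mild advantage of producing the equivalence of representation categories explicitly, which is the form in which the statement is actually used to deduce the Katz--Gabber isomorphism.
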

\begin{proof}
	\begin{enumerate}[label={\alph*)},ref={\alph*)}]
		\item This is clear, as finite group of unipotent matrices is a finite $p$-group.
		\item The profinite completion of $G_1\ast^{\unip}G_2$ can be
			described as the affine $k$-group scheme associated
			with the full subcategory
			$\mathcal{C}^{\text{finite}}(G_1,G_2)$ of
			$\mathcal{C}^{\unip}(G_1,G_2)$ (see Definition
			\ref{defn:coproduct}) whose objects have
			a finite monodromy group, i.e.~the full subcategory of
			triples $(\rho_1,\rho_2, V)$, such that the closed
			subgroup $H$ of $\GL(V)$ generated by the images of
			$\rho_1$ and $\rho_2$  is a finite $p$-group. In particular
			$\rho_i:G_i\rightarrow \GL(V)$ factors uniquely through the
			profinite completion of $G_i$, for $i=1,2$. This means
			that we get a fully faithful functor
			\begin{equation}\label{eq:completionFunctor}\mathcal{C}^{\text{finite}}(G_1,G_2)\rightarrow
				\Repf^{\cont}_k
				\left(\widehat{G}_1\ast_{(p)}\widehat{G}_2\right),\end{equation}
			where continuous means with respect to the profinite
			topology and the discrete topology on $k$, i.e.~if $G$
			is a profinite group, then a
			representation $G\rightarrow \GL_k(V)$ is continuous
			if and only if the image of $G$ is finite. The functor
			\eqref{eq:completionFunctor} is also essentially surjective:
			A continuous representation
			$\rho:\widehat{G}_1\ast_{(p)}\widehat{G}_2\rightarrow
			\GL(V)$ gives rise to a pair of 
			representations $\rho_i:G_i\rightarrow \widehat{G}_i\rightarrow
			\GL(V)$, $i=1,2$, such that the images of $\rho_1$ and
			$\rho_2$ generate a finite subgroup of $\GL(V)$, which
			is a finite $p$-group, as
			$\widehat{G}_1\ast_{(p)} \widehat{G}_2$ is a pro-$p$-group.
			Thus the triple $(\rho_1,\rho_2,V)$ is an object of
			$\mathcal{C}^{\text{finite}}(G_1,G_2)$ and the proof
			is complete.
	\end{enumerate}
\end{proof}
By Theorem \ref{thm:A1}, the inclusion $k[t]\subset k\llparen
t^{-1}\rrparen$ induces an isomorphism
\[\pi_1^{\unip}(k\llparen t^{-1}\rrparen,\omega_x)\cong
	\pi_1^{\unip}(\A^1_k,\omega_x).\]
	Applying Proposition \ref{prop:completion} to the isomorphism in  Theorem \ref{thm:coproduct}, we
obtain an isomorphism
\[\pi_1^{\et}(\P^1_k\setminus \{0\},x)^{(p)}\ast_{(p)}\pi_1^{\et}(\A^1_k,x)^{(p)}\xrightarrow{\cong}\pi_1^{\et}(\G_m,x)^{(p)}.\]
Choosing  algebraic closures of $k\llparen t\rrparen$ and $k\llparen
t^{-1}\rrparen$, we get two geometric points $x_0:\Spec \overline{k\llparen
t\rrparen}\rightarrow \G_m$ and $x_{\infty}:\Spec \overline{k\llparen
	t^{-1}\rrparen} \rightarrow \G_m$. Fixing isomorphisms (``chemins'') from
$x$ to $x_0$ and $x_\infty$, we obtain an isomorphism
\begin{equation}\label{eq:katzgabberiso}\Gal(\overline{k\llparen t\rrparen}/k\llparen
	t\rrparen)^{(p)}\ast_{(p)}\Gal(\overline{k\llparen
		t^{-1}\rrparen}/k\llparen
	t^{-1}\rrparen)^{(p)}\xrightarrow{\cong}\pi_1^{\et}(\G_m,x)^{(p)},\end{equation}
which is the isomorphism Katz and Gabber construct in
\cite[p.~98]{Katz/LocalToGlobal}, except that they work with strict
henselizations instead of completions.
\begin{remark}
	The isomorphism \eqref{eq:katzgabberiso} constructed in 
	\cite{Katz/LocalToGlobal} contradicts \cite[Thm.~2.30]{Kedlaya}.
	Indeed, a consequence of \emph{loc.~cit.~}would be that the free
	pro-$p$-group
	$\pi_1(\G_m,x)^{(p)}$ is the direct product 
	\[\pi_1(\A^1,x)^{(p)}\times\pi_1(\P^1_k\setminus \{0\},x)^{(p)}.\] But
	it is not hard to check that such a direct product is not a free
	pro-$p$-group by computing
	\[H^2(\pi_1(\A_k^1,x)^{(p)}\times\pi_1(\P^1_k\setminus
		\{0\},x)^{(p)},\F_p)\neq 0.\]
	For a correction, see \cite{Kedlaya/Erratum}.
\end{remark}

\providecommand{\bysame}{\leavevmode\hbox to3em{\hrulefill}\thinspace}
\providecommand{\MR}{\relax\ifhmode\unskip\space\fi MR }
\providecommand{\MRhref}[2]{%
  \href{http://www.ams.org/mathscinet-getitem?mr=#1}{#2}
}
\providecommand{\href}[2]{#2}

\end{document}